\def \r{\mathfrak{r}}
\def \C{\mathcal{C}}
\numberwithin{equation}{section}
\theoremstyle{plain}
\newtheorem{theorem}{Theorem}[section]
\newtheorem{proposition}[theorem]{Proposition}
\newtheorem{lemma}[theorem]{Lemma}
\newtheorem{claim}{Claim}
\theoremstyle{definition}
\newtheorem{remark}[theorem]{Remark}
\title[Ancient solution to $\alpha$-CSF]{ancient finite entropy flows by powers of curvature in $\mathbb{R}^2$}
\author{Kyeongsu Choi}
\address{School of Mathematics, Korea Institute for Advanced Study, 85 Hoegiro, Dongdaemun-gu, Seoul 02455, Republic of Korea.}
\email{choiks@kias.re.kr}
\author{Liming Sun}
\address{Department of Mathematics, University of British Columbia, Vancouver, BC, V6T 1Z2, CA.}
\email{lsun@math.ubc.ca}
\date{\today}
\subjclass[2010]{Primary 53C44, 53A04; Secondary 35K55}
\keywords{}
\begin{document}

\maketitle

\begin{abstract}
    We show the existence of non-homothetic ancient flows by powers of curvature embedded in $\mathbb{R}^2$ whose entropy  is finite.  We determine the Morse indices and kernels of the linearized operator of shrinkers to the flows, and construct ancient flows by using unstable eigenfunctions of the linearized operator. 
\end{abstract}
  
\section{Introduction}

Given $\alpha>0$, the $\alpha$-curve shortening flow ($\alpha$-CSF) is a family of complete convex curves $\Gamma_t $ embedded in $\mathbb{R}^2$ which evolves by the $\alpha$-power-of-curvature. Namely,  the position vector $\mathbf{X}(\cdot,t)$ of $\Gamma_t$ satisfies 
\begin{align}\label{eq:main-flow}
    \frac{\partial \mathbf{X}}{\partial t}(p,t)=\kappa^{\alpha}(p,t)\mathbf{N}(p,t),
\end{align}
where $\kappa$ is the curvature and $\mathbf{N}$ is {inward pointing} unit normal vector of $\Gamma_t$.

\bigskip

We say that a flow $\Gamma_t$ is \textbf{ancient} if it exists for $t\in (-\infty,T)$ for some $T\in \mathbb{R}\cup \{+\infty\}$.  Geometric flows satisfy parabolic equations so that there are in general only a few number of ancient flows. For example, \citet{Wang11} showed that a closed convex embedded ancient curve shortening flow (CSF)\footnote{Curve shortening flow means the $\alpha$-CSF with $\alpha=1$.} sweeping the entire plane is a shrinking circle, and Daskalopoulos-Hamilton-Sesum \cite{daskalopoulos2010} showed that a closed convex embedded  ancient CSF is a shrinking circle or an Angenent oval.\footnote{It looks like a shortening paper clip sweeping a slab.} See also \citet*{bourni2019convex} for the classification of non-compact  ones.

Ancient flows have been intensively studied in the mean curvature flow, a higher dimensional version of the CSF. In particular, ancient mean curvature flows are useful to investigate singularities. See \cite{angenent2019unique,angenent2018uniqueness,brendle2018uniqueness,brendle2019uniqueness,choi2018ancient,choi2019ancient-white,chodosh2020mean} (c.f. Ricci flow  \cite{brendle2018ancient,angenent2019unique-B,brendle2020uniqueness}).

The $\alpha$-CSF is a fully nonlinear flow, which behaves like the $\alpha$-Gauss curvature flow in many aspects. In particular, if $\alpha=\frac{1}{3}$ ({$\alpha=\frac{1}{n+2}$ in higher dimensions) then a $\alpha$-CSF remains a $\alpha$-CSF under any affine transform (of determinant one) of the ambient space. The affine normal flow ($\frac{1}{3}$-CSF) have been widely studied due to its beauty from affine geometry. For example, \citet{chen2015classifying} showed that an ancient closed convex affine normal flow must be a shrinking ellipse (see an alternate proof by \citet{ivaki2016classification}). See also \cite{loftin2008ancient} for higher dimensions.

\bigskip

\citet*{andrews2016flow} introduced an important notion of entropy for $\alpha$-CSF. We recall that the support function  $u_{z_0}$ with respect to $z_0\in \mathbb{R}^2$ is
\begin{align*}
    u_{z_0}(\theta):=\max_{z\in \Omega}\langle (\cos \theta,\sin\theta),z-z_0\rangle,
\end{align*}
and the entropy $\mathcal{E}_\alpha (\Omega)$ of a bounded convex region $\Omega\subset\mathbb{R}^2$ and its boundary $\partial \Omega$ is defined by 
\begin{equation}
\mathcal{E}_\alpha (\partial\Omega)=\mathcal{E}_\alpha (\Omega)=\sup_{z_0\in \Omega}\mathcal{E}_\alpha(\Omega,z_0),
\end{equation}
where $\mathcal{E}_\alpha(\Omega,z_0)$ is
\begin{align}
    \mathcal{E}_\alpha(\Omega,z_0)=\begin{cases}
    \frac{\alpha}{\alpha-1}\log\left( \fint_{\mathbb{S}^1}u_{z_0}^{1-\frac{1}{\alpha}}(\theta)d\theta\right)- \frac{1}{2}\log\frac{|\Omega|}{\pi}&\text{if }\alpha\neq 1,\\ \fint_{\mathbb{S}^1}\log u_{z_0}(\theta)d\theta- \frac{1}{2}\log\frac{|\Omega|}{\pi}&\text{if }\alpha=1.
    \end{cases}
\end{align}
Here $|\Omega|$ denotes the area of it.

In \cite{andrews2016flow}, they showed that the entropy $\mathcal{E}_\alpha (\Gamma_t)$ of the $\alpha$-CSF decreases with respect to $t$. Hence, we say that an ancient $\alpha$-CSF has \textbf{finite entropy} if
\begin{equation}
\lim_{t\to -\infty}\mathcal{E}_\alpha (\Gamma_t)< +\infty.
\end{equation}
Clearly, self-shrinking ancient solutions has finite entropy, since the entropy does not change under homothetic transformation. However, every non-homothetic ancient $\alpha$-CSF discovered in previous researches including \cite{angenent1992doughnuts} and \cite{bourni2020ancient} do not have finite entropy. See also \cite{choi2020uniqueness-choi} for a higher dimensional analogue. Indeed, the entropy of every non-homothetic ancient $\alpha$-CSF with $\alpha\in (\frac{2}{3},1]$ must diverge by \cite{daskalopoulos2010} and \cite{bourni2020ancient}.  In this paper, we present families of non-homothetic closed ancient $\alpha$-CSFs which converge to a self-shrinker\footnote{If $\Gamma_t=(- t)^{\frac{1}{\alpha+1}} \Gamma_{-1}$ is the $\alpha$-CSF, then we call $\Gamma_{-1}$ a self-shrinker or a shrinker.} as $t\to -\infty$ after rescaling. Then, their entropy is less than that of the limiting shrinker, namely the ancient flows have the finite entropy. See Theorem \ref{thm:exist-type1}.

\bigskip

To construct ancient flows asymptotic to a self-shrinking ancient flow, we first recall the classification result of self-shrinkers.
\begin{theorem}[Andrews \cite{andrews2003classification}]\label{prop:class-ben}
If $\alpha\in[\frac{1}{8},+\infty)\backslash \{1/3\}$, then the shrinker of \eqref{eq:main-flow} is a circle (denote it as $\Gamma_\alpha^c$). If $\alpha=\frac{1}{3}$, then  a shrinker is an ellipse. If $\alpha \in (0,\frac{1}{8})$, then a shrinker is a circle or a curve ${\Gamma_{\alpha}^k}$ with $k$-fold symmetry, where $3\leq  k \in \mathbb{N}$ with $k<\sqrt{1+1/\alpha}$. The curves ${\Gamma_{\alpha}^k}$ depend smoothly on $\alpha<\frac{1}{k^2-1}$ and converge to regular $k$-sided poloygons as $\alpha\searrow 0$ and to circles as $\alpha\nearrow \frac{1}{k^2-1}$.  See Table \ref{table1} and Figure \ref{fig:converge} for illustrations.
\end{theorem}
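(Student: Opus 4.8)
The plan is to translate the shrinker condition into one second‑order autonomous ODE for the support function and then read off the classification from the geometry of its period function. If $\Gamma_{-1}$ is a shrinker with support function $u(\theta)$ about its center, substituting the self‑similar ansatz $\mathbf X(\cdot,t)=(-t)^{1/(\alpha+1)}\Gamma_{-1}$ into \eqref{eq:main-flow} and using $\kappa=(u+u_{\theta\theta})^{-1}$ shows, after a harmless rescaling, that $u$ is a positive $2\pi$‑periodic solution of
\begin{equation}\label{eq:shr}
u_{\theta\theta}+u=u^{-1/\alpha};
\end{equation}
conversely every positive $2\pi$‑periodic solution of \eqref{eq:shr} is the support function of a smooth strictly convex closed shrinker (strict convexity, hence embeddedness, being forced by $u+u_{\theta\theta}=u^{-1/\alpha}>0$), the constant solution $u\equiv1$ being the circle $\Gamma^c_\alpha$. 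Equation \eqref{eq:shr} is conservative with energy $H=\tfrac12u_\theta^2+V(u)$, where $V(u)=\tfrac12u^2-\tfrac{\alpha}{\alpha-1}u^{1-1/\alpha}$ (and $\tfrac12u^2-\log u$ when $\alpha=1$); $V$ has a unique nondegenerate minimum at $u=1$, so the bounded orbits form a one‑parameter family parametrized by $H=E\in(V(1),V(0^+))$ with $V(0^+)=+\infty$ if $\alpha\le1$ and $V(0^+)=0$ if $\alpha>1$, and every orbit is periodic. A non‑circular closed convex shrinker is exactly such an orbit whose minimal period is $2\pi/k$ for some integer $k\ge1$, $k$ being then the order of its dihedral symmetry. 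So the whole theorem reduces to a description of the period function $E\mapsto T(E)$.

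Next I would pin down the two ends of $T$: linearizing \eqref{eq:shr} at $u=1$ gives $v_{\theta\theta}+(1+\tfrac1\alpha)v=0$, so $T(E)\to 2\pi/\sqrt{1+1/\alpha}$ as $E\searrow V(1)$, while a change of variables in the time integral $T(E)=\sqrt2\oint(E-V)^{-1/2}\,du$ (exploiting that both terms of $V$ are pure powers) makes the turning point near $u=0$ contribute negligibly and yields the limiting period $\pi$ when $\alpha\le1$ and $2\pi/(1+\tfrac1\alpha)$ when $\alpha>1$; in the case $\alpha>1$ this limiting orbit touches the origin and the corresponding curve fails to be $C^2$ there, so it is not a genuine shrinker. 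The heart of the matter, and the step I expect to be hardest, is to show that $T$ is strictly monotone on $(V(1),V(0^+))$ whenever $\alpha\ne\tfrac13$ --- strictly increasing if $\alpha<\tfrac13$, strictly decreasing if $\alpha>\tfrac13$ --- and that $T\equiv\pi$ when $\alpha=\tfrac13$. The isochronous case is transparent: the substitution $w=u^{(\alpha+1)/(2\alpha)}$ together with the energy relation turns \eqref{eq:shr} into $w_{\theta\theta}+\big(\tfrac{\alpha+1}{2\alpha}\big)^2 w=c(E)\,w^{(1-3\alpha)/(\alpha+1)}$, which for $\alpha=\tfrac13$ is the \emph{linear} equation $w_{\theta\theta}+4w=4E$; hence $w=u^2=E+C\cos\!\big(2(\theta-\phi)\big)$, all orbits have period $\pi$, and these $u$ are precisely the support functions of ellipses. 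For $\alpha\ne\tfrac13$ the same substitution, after an $E$‑dependent rescaling, reduces the claim to the monotonicity of the period function of a two‑term potential $\tfrac12\beta^2v^2-\tfrac1{\gamma+1}v^{\gamma+1}$ with $\beta=\tfrac{\alpha+1}{2\alpha}$ and $\gamma=\tfrac{1-3\alpha}{\alpha+1}\ne0$; this monotonicity, with the sign of $T'$ governed by that of $\tfrac13-\alpha$, is of the type covered by the classical criteria for period functions of one‑well potentials (differentiate the time integral and exhibit $T'(E)$ as the integral of a sign‑definite integrand). Making this sharp for the full range $\alpha>0$ is the technical core.

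Granting the monotonicity, the rest is bookkeeping. For $\alpha\ne\tfrac13$, $T$ is a homeomorphism of the energy interval onto the open interval with endpoints $2\pi/\sqrt{1+1/\alpha}$ and the limiting value above; this interval always lies in $(0,2\pi)$, and a direct computation shows it contains $2\pi/k$ for an integer $k$ precisely when $2<k<\sqrt{1+1/\alpha}$. Hence: if $\alpha\ge\tfrac18$ and $\alpha\ne\tfrac13$ there is no non‑circular closed shrinker; if $0<\alpha<\tfrac18$, then for each integer $k$ with $3\le k<\sqrt{1+1/\alpha}$ there is a unique orbit with minimal period $2\pi/k$, which up to congruence is a single curve $\Gamma^k_\alpha$ with exact $k$‑fold dihedral symmetry, and it exists precisely for $\alpha<\tfrac1{k^2-1}$; and if $\alpha=\tfrac13$ the entire energy family has period $\pi=2\pi/2$, giving the ellipses. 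Finally, since $2\pi/k=2\pi/\sqrt{1+1/\alpha}$ exactly at $\alpha=\tfrac1{k^2-1}$, the energy of $\Gamma^k_\alpha$ tends to $V(1)$ as $\alpha\nearrow\tfrac1{k^2-1}$, so $\Gamma^k_\alpha\to\Gamma^c_\alpha$; as $\alpha\searrow0$ the energy runs to the top of the well and, passing to the limit in \eqref{eq:shr} (where $u^{-1/\alpha}$ degenerates into an obstacle forcing the curvature measure $u+u_{\theta\theta}$ to concentrate at $k$ equally spaced normal directions), $u$ converges to the support function of the regular $k$‑gon; and smoothness of $\alpha\mapsto\Gamma^k_\alpha$ follows from the implicit function theorem applied to $T(E;\alpha)=2\pi/k$ using $\partial_ET\ne0$.
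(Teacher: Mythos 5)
This theorem is not proved in the paper at all: it is imported from Andrews \cite{andrews2003classification}, so the only meaningful comparison is with Andrews' original argument, and your outline is essentially that argument --- reduce shrinkers to positive $2\pi$-periodic solutions of $h_{\theta\theta}+h=h^{-1/\alpha}$, regard the non-constant ones as a one-parameter family of orbits of a one-well Hamiltonian system, and classify them through the period function; your isochronous computation for $\alpha=\tfrac13$ ($w=u^2$, $w_{\theta\theta}+4w=4E$, hence ellipses) is correct, and the endpoint values of the period ($2\pi/\sqrt{1+1/\alpha}$ at the bottom of the well, $\pi$ resp.\ $2\pi\alpha/(\alpha+1)$ at the top for $\alpha\le 1$ resp.\ $\alpha>1$) together with the bookkeeping $2<k<\sqrt{1+1/\alpha}$, i.e.\ $\alpha<1/(k^2-1)$, match what Andrews proves.

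The genuine gap is the step you yourself flag as the ``technical core'': strict monotonicity of the period function on the whole energy range for every $\alpha\neq\tfrac13$, with the sign of $T'$ governed by $\tfrac13-\alpha$. This is exactly the heart of Andrews' paper (monotonicity of $\Theta(\alpha,r)$ in $r$, which the present paper also quotes later, in Lemma \ref{lem:eta}), and it cannot be discharged by a generic appeal to classical one-well criteria: after your normalization the potential is $\tfrac12\beta^2v^2-\tfrac{1}{\gamma+1}v^{\gamma+1}$ with $\gamma+1=2(1-\alpha)/(1+\alpha)$, so for $\alpha>1$ the second term is a singular negative power, outside the scope of the standard polynomial-potential results, while for $\alpha<1$ the exponent sweeps through $(0,2)$ and any criterion (Chicone, Schaaf, or a direct differentiation of the time integral) must be verified across that whole range and must produce the sign change precisely at $\alpha=\tfrac13$; none of this is carried out. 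Without it, the nonexistence of non-circular shrinkers for $\alpha\in[\tfrac18,\infty)\setminus\{\tfrac13\}$, the uniqueness of $\Gamma_\alpha^k$, and your implicit-function-theorem argument for smooth dependence (which needs $\partial_E T\neq0$) all remain unproved. A secondary gap: the convergence of $\Gamma_\alpha^k$ to the regular $k$-gon as $\alpha\searrow 0$ is only gestured at via a formal limit of the ODE; Andrews establishes this limit by a separate argument.
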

   \begin{table}[ht]
   \begin{tabular}{ >{\centering\arraybackslash}m{1in} | >{\arraybackslash}m{2in} }
   \hline
   $\alpha$ &  $\Gamma_{\alpha}^c\text{ and }{\Gamma_{\alpha}^k}$\\
    \hline
    $[\frac{1}{8},+\infty)\backslash\frac13 $& \includegraphics[width=0.11\textwidth]{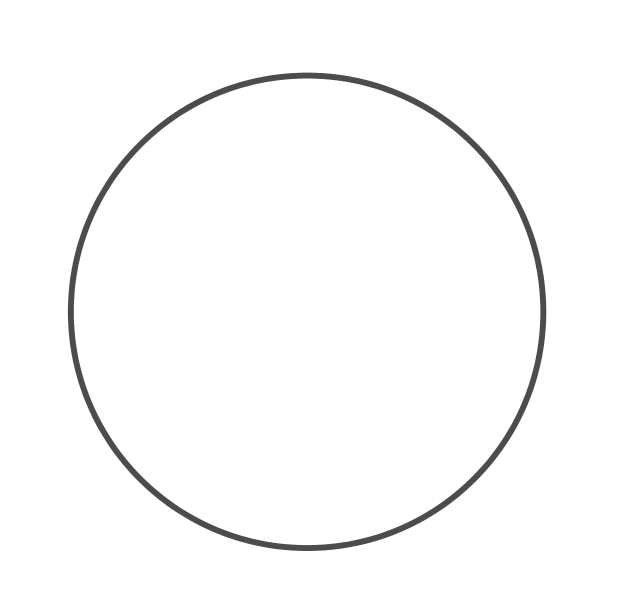}   \\ \hline
    $[\frac{1}{15},\frac{1}{8})$ & \includegraphics[width=0.11\textwidth]{shrinker-1.png}\includegraphics[width=0.1\textwidth]{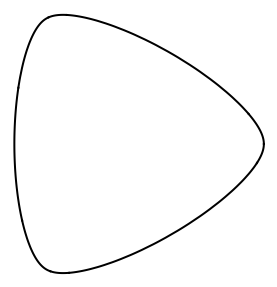} \\ \hline
    $[\frac{1}{24},\frac{1}{15})$ & \includegraphics[width=0.11\textwidth]{shrinker-1.png}\includegraphics[width=0.1\textwidth]{shirnker-3.png}\includegraphics[width=0.1\textwidth]{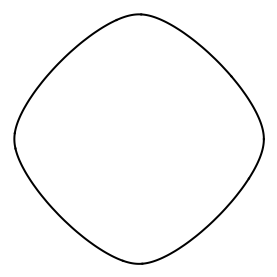}  \\ \hline
    $\cdots$& $\cdots$\\
    \hline
  \end{tabular}
  \vspace{0.5cm}
  \caption{Enumeration of shrinkers for different $\alpha$.}
  \label{table1}
  \end{table}
\begin{figure}[ht]
    \centering
    \includegraphics[width=0.32\linewidth]{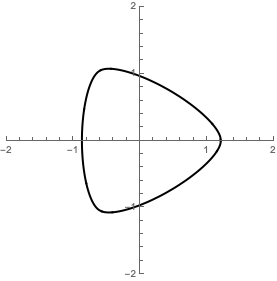}
    \includegraphics[width=0.32\linewidth]{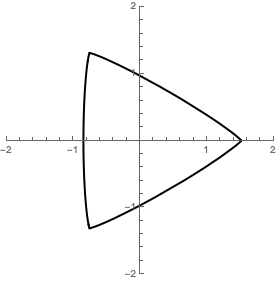}
    \includegraphics[width=0.32\linewidth]{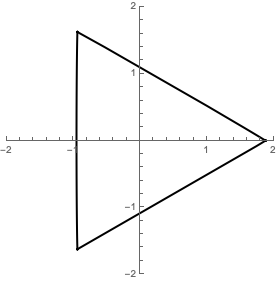}
    \caption{The shape of ${\Gamma_{\alpha}^k}$ (normalized by \eqref{eq:h2}) when $k=3$,  $\alpha=\frac19,\frac{1}{16},\frac{1}{100}$ from left to right.}
    \label{fig:converge}
\end{figure}

To fix the asymptotic self-shrinking ancient flow, we consider the normalized flow $\bar \Gamma_\tau$ defined by
\begin{equation}\label{eq:normalized_flow}
\bar{\boldsymbol{X}}(p,\tau)=(1+\alpha)^{-\frac{1}{\alpha+1}}e^{\tau}\boldsymbol{X}(p,-e^{-(1+\alpha)\tau}),
\end{equation}
By Proposition \ref{prop:u_evol}  the support function $\bar u(\theta,\tau)$ of $\bar{\boldsymbol{X}}$ with respect to the origin satisfies
  \begin{align}\label{eq:hat-u-tau}
  \bar u_\tau=-(\bar u_{\theta\theta}+\bar u)^\alpha+\bar u.
  \end{align}
Hence, the support function $h$ of a self-shrinker $\Gamma$ with respect to the origin satisfies
\begin{align}\label{eq:h2}
   h_{\theta\theta}+h=h^{-1/\alpha},
   \end{align}
and thus the difference $v=\bar u - h$ satisfies
\begin{align}\label{intro:eq:v-flow}
v_\tau=-{(h_{\theta\theta}+h+v_{\theta\theta}+v)^{-\alpha}}+(h+v):= \mathcal{L}_{\Gamma}(v)+ E_{\Gamma}(v).
\end{align}
Here $\mathcal{L}_\Gamma$ is the linearization of the above equation at $v=0$
 \begin{align}\label{eq:L-def}
 \mathcal{L}_{\Gamma}(v):={\alpha h^{1+\frac{1}{\alpha}}}(v_{\theta\theta}+v)+v
 \end{align}
and  
\begin{equation}
|E_{\Gamma}(v)|\leq C|v_{\theta\theta}+v|^2,
\end{equation}
for small enough $v_{\theta\theta}+v$. See Proposition \ref{prop:error_expansion} for details.
\bigskip
 
 It is easy to see that the Jacobi operator $\mathcal{L}_{\Gamma}$ is a self-adjoint operator on the space $L_h^2(\mathbb{S}^1)=\{f:\int_{\mathbb{S}^1}f^2h^{-1-1/\alpha}<\infty\}$, and thus it has a sequence of eigenvalues and eigenfunctions  which form the basis of $L_h^2(\mathbb{S}^1)$. We are able to characterize its kernel and  \textbf{Morse index}\footnote{The dimension of negative space of $-\mathcal{L}$.} as follows.
\begin{theorem}[cf. Proposition \ref{prop:eig} and Theorem \ref{thm:main-eig}]\label{intro:thm:sep-L} Suppose $0<\alpha \neq \frac13$. 
\begin{enumerate}
\item The Morse index of $\mathcal{L}_{\bar\Gamma_{\alpha}^k}$ is $2k-1$, and  $\ker \mathcal{L}_{\bar\Gamma_{\alpha}^k}=span\{h_\theta\}$, where $h$ is the support function of $\bar\Gamma_{\alpha}^k$.
\item The Morse index of $ \mathcal{L}_{\bar\Gamma_{\alpha}^c}$ is $2\lceil\sqrt{1+1/\alpha}\rceil-1$.\footnote{$\lceil x\rceil$ denotes least integer greater than or equal to $x$.}  If $\alpha=\frac{1}{k^2-1}$, then $\ker \mathcal{L}_{\bar\Gamma_{\alpha}^c}=span\{\cos k\theta,\sin k\theta\}$. Otherwise $\ker \mathcal{L}_{\bar\Gamma_{\alpha}^c}=\emptyset$.
\end{enumerate}
\end{theorem}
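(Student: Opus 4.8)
The plan is to analyze the eigenvalue problem $\mathcal{L}_\Gamma \phi = \mu \phi$, i.e. $\alpha h^{1+1/\alpha}(\phi_{\theta\theta}+\phi) + \phi = \mu \phi$, which after dividing by $\alpha h^{1+1/\alpha}$ becomes a Sturm--Liouville problem on $\mathbb{S}^1$ with weight $h^{-1-1/\alpha}$:
\begin{equation*}
\phi_{\theta\theta} + \phi = \frac{\mu - 1}{\alpha}\, h^{-1-1/\alpha}\, \phi .
\end{equation*}
First I would treat the circle case $\bar\Gamma_\alpha^c$, where $h\equiv 1$ (after normalization), so the operator has constant coefficients: the eigenfunctions are $1,\cos\theta,\sin\theta,\cos2\theta,\ldots$ with $\phi_{\theta\theta}+\phi = (1-m^2)\phi$ on the $m$-th mode, giving eigenvalues $\mu_m = 1 + \alpha(1-m^2)$ (each with multiplicity two for $m\geq 1$, one for $m=0$). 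Then $\mu_m < 0 \iff m^2 - 1 > 1/\alpha \iff m > \sqrt{1+1/\alpha}$, which happens for $m \geq \lceil\sqrt{1+1/\alpha}\rceil$ when $\sqrt{1+1/\alpha}$ is not an integer, and for $m \geq \sqrt{1+1/\alpha}+1$ when it is; counting $\mu_0,\mu_1,\ldots,\mu_{m_0-1}$ with their multiplicities against the negative ones gives Morse index $2\lceil\sqrt{1+1/\alpha}\rceil - 1$. The kernel is nontrivial exactly when some $\mu_m = 0$, i.e. $1/\alpha = m^2-1$, i.e. $\alpha = 1/(k^2-1)$, in which case $\ker = \mathrm{span}\{\cos k\theta, \sin k\theta\}$; otherwise it is trivial. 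Note $m=1$ gives $\mu_1 = 1$, never zero, consistent with $h_\theta\equiv 0$ for the round shrinker.

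Next I would handle $\bar\Gamma_\alpha^k$. Here $h_\theta \neq 0$ and differentiating the shrinker equation \eqref{eq:h2} in $\theta$ shows $h_\theta$ lies in $\ker\mathcal{L}_\Gamma$ (this is the translation Jacobi field), so $0$ is an eigenvalue. The key point is to determine its multiplicity and the number of negative eigenvalues. The plan is a Sturm-oscillation argument: $h$, being the support function of a closed convex $k$-fold symmetric curve that is a graph over the circle, has $h_\theta$ with exactly $2k$ simple zeros on $\mathbb{S}^1$ (the extremizers of the support function along the $k$ symmetry axes and the $k$ edge-midpoint directions). By Sturm theory for the periodic problem, an eigenfunction with $2j$ zeros corresponds to roughly the $j$-th eigenvalue pair, so $h_\theta$ having $2k$ zeros places the zero eigenvalue at the $k$-th level; the eigenvalues below it, namely $\mu_0$ (simple, the bottom, with positive eigenfunction $h^{1+1/\alpha}$ or similar) together with $k-1$ pairs $\mu_1,\ldots,\mu_{k-1}$, are all strictly negative — so the Morse index is $1 + 2(k-1) = 2k-1$. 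That $\mu_0 < 0$ should follow by testing $\mathcal{L}_\Gamma$ against a suitable nonnegative function, or by the degeneration $\alpha\searrow 0$ where $\Gamma_\alpha^k$ approaches a polygon; that there are no further zero modes (so $\ker = \mathrm{span}\{h_\theta\}$) follows because the zero eigenvalue pair at level $k$ must be exactly two-dimensional unless an accidental degeneracy occurs, which one rules out by the smooth dependence on $\alpha$ and the endpoint behavior $\alpha\nearrow 1/(k^2-1)$ (where $\Gamma_\alpha^k\to$ circle and the mode count matches the circle computation).

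The main obstacle is the precise Sturm-oscillation bookkeeping on the circle: periodic Sturm--Liouville eigenvalues come in pairs with interlacing zero counts, and one must carefully match the $2k$ zeros of $h_\theta$ to the correct eigenvalue index and argue that no lower eigenfunction can have the same number of sign changes. I would base this on the exact count of critical points of the support function $h$ of $\Gamma_\alpha^k$ (which in turn rests on Andrews' construction in Theorem \ref{prop:class-ben} and the $k$-fold symmetry), combined with a continuity/degree argument in $\alpha$ to pin down multiplicities at the two endpoints and propagate them throughout the interval $(0, 1/(k^2-1))$; the circle computation above provides the anchor at $\alpha = 1/(k^2-1)$.
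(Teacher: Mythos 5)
Your circle computation is correct and coincides with Proposition \ref{prop:eig}(3) of the paper, and your observation that $h_\theta\in\ker\mathcal{L}_{\bar\Gamma_\alpha^k}$ with $2k$ zeros is also how the paper starts. But for $\bar\Gamma_\alpha^k$ there is a genuine gap at the decisive step. Periodic Sturm--Liouville oscillation theory only tells you that an eigenfunction with $2k$ zeros belongs to the $k$-th eigenvalue \emph{pair}; since the zero eigenvalue is simple (eigenfunction $h_\theta$), this leaves exactly two possibilities: either $0=\lambda_{2k}$ (Morse index $2k-1$) or $0=\lambda_{2k+1}$ with $\lambda_{2k}<0$ (Morse index $2k$). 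In other words, the sign of the \emph{other} eigenvalue whose eigenfunction has $2k$ zeros is the whole issue, and nothing in your bookkeeping decides it. Your proposed fix --- a continuity/degree argument in $\alpha$ anchored at $\alpha=1/(k^2-1)$ --- does not close this: at that endpoint $\bar\Gamma_\alpha^k$ degenerates to the circle, where the relevant zero eigenvalue is \emph{double}, and continuity alone cannot determine in which direction the second eigenvalue splits along the bifurcating $k$-fold branch (that requires a genuine perturbation computation, and the other endpoint $\alpha\searrow 0$ is a degenerate polygonal limit). The paper resolves exactly this point by a separate argument: it restricts to the half-period $[0,\pi/k]$ and proves that the second Neumann eigenvalue is positive (Proposition \ref{prop:neu-eig2}), using the auxiliary solution $\eta=\tfrac{d}{dr}U(r,\theta)\big|_{r=r_*}$ of the linearized ODE obtained by differentiating Andrews' family of shrinker arcs in the parameter $r$ (Lemma \ref{lem:eta}) together with a Wronskian comparison; this forces $\lambda_{2k}=0$ and hence index $2k-1$. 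Your proposal contains no substitute for this ingredient.

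Two further points. First, the kernel statement is not established by your argument: the simplicity of the zero eigenvalue ($\ker=\mathrm{span}\{h_\theta\}$) is a nontrivial fact which the paper imports from Andrews (Lemma 7.3 of the classification paper), and your sentence that the ``zero eigenvalue pair at level $k$ must be exactly two-dimensional unless an accidental degeneracy occurs'' is inconsistent with the claimed one-dimensional kernel --- if that pair were two-dimensional at eigenvalue $0$, the kernel would be two-dimensional. Second, the pairing/zero-count bookkeeping you wave at (``roughly the $j$-th eigenvalue pair'') is proved in the paper by hand (Lemmas \ref{lem:nodal-nondeg}--\ref{lem:eig-one} plus Courant's theorem), including the subtlety that when $k$ is even the eigenvalues $\lambda_k,\lambda_{k+1}$ may be simple with eigenfunctions having exactly $k$ zeros; this does not affect the final count but does need to be addressed if you argue via nodal counts rather than citing the classical periodic oscillation theorem.
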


The center manifold theory in functional analysis provides the existence of an $I$-parameter family of ancient solutions to a class of fully nonlinear parabolic equations, where $I$ is the Morse index. See \citet[Chapter 9]{lunardi2012analytic}. However, using the contraction mapping method, we can show the existence of such ancient solutions and even including sharp asymptotic behaviors of the solutions with layer structures. See \citet{choi2019ancient} and Caffarelli-Hardt-Simon \cite{caffarelli1984} for quasilinear parabolic and elliptic PDEs. Here comes the second main theorem of our paper.

\begin{theorem}[cf. Theorem \ref{thm:exist-type1}]\label{intro:thm:exist1}
Let $\alpha \neq \frac{1}{3}$ and $\lambda_1\leq \cdots \leq \lambda_I <0$ denote the negative eigenvalues of $\mathcal{L}_\Gamma$ where $\Gamma=\bar\Gamma_{\alpha}^k$ or $\bar\Gamma_{\alpha}^c$ and $I$ is the Morse index.  There exists $\beta\in (0,1)$, $\varepsilon_0>0$ and an injective continuous map $\mathcal{S}:B_{\varepsilon_0}(0)(\subset\mathbb{R}^{I-3})\to C^{2,\beta}(\mathbb{S}^1\times (-\infty,-1])$ such that for each $\boldsymbol{a}=(a_1,\cdots,a_{I-3})\in \mathbb{R}^{I-3}$ the image $v=\mathcal{S}(\boldsymbol{a})$ is an ancient solution to \eqref{intro:eq:v-flow}. Moreover, if \,$3<k\leq I$ and $\boldsymbol{a},\boldsymbol{b}\in \mathbb{R}^{I-3}$ satisfy $a_{k-3}-b_{k-3} \neq 0$ and  $a_j-b_j = 0$ for all $j>k-3$, then $\mathcal{S}$ satisfies
\begin{equation}\label{intro:eq:layer1}
 \mathcal{S}(\boldsymbol{a})(\theta,\tau)-\mathcal{S}(\boldsymbol{b})(\theta,\tau)=(a_{k-3}-b_{k-3}) e^{-\lambda_{k}\tau}\varphi_k(\theta) +o(e^{-\lambda_{k}\tau})
\end{equation}
when $\lambda_{k-1}<\lambda_k$, and
\begin{multline}\label{intro:eq:layer2}
 \mathcal{S}(\boldsymbol{a})(\theta,\tau)-\mathcal{S}(\boldsymbol{b})(\theta,\tau)\\=e^{-\lambda_{k}\tau}\sum_{i=k-1}^{k}(a_{i-3}-b_{i-3}) \varphi_i(\theta) +o(e^{-\lambda_{k}\tau})
\end{multline}
when $\lambda_{k-1}=\lambda_k$, where $\varphi_i$ are eigenfunctions of $\mathcal{L}_\Gamma$ with the eigenvalue $\lambda_i$ and $\langle \varphi_i,\varphi_j\rangle_{L^2_h}=\delta_{ij}$. In particular, $\mathcal{S}(\boldsymbol{0})(\theta,\tau)=h(\theta)$ corresponds to the shrinker. 
\end{theorem}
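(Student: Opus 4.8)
The plan is to construct the ancient solutions via a contraction mapping argument in a weighted parabolic space, following the strategy of \cite{choi2019ancient} and \cite{caffarelli1984}. Write the unstable eigenvalues of $\mathcal{L}_\Gamma$ as $\lambda_1\leq\cdots\leq\lambda_I<0$ with $L^2_h$-orthonormal eigenfunctions $\varphi_1,\ldots,\varphi_I$. Among these, $\varphi$'s corresponding to rotations of $\Gamma$ (the span of $h_\theta$, and when $\Gamma=\bar\Gamma^c_\alpha$ the two-dimensional space $\mathrm{span}\{\cos k\theta,\sin k\theta\}$ at the critical $\alpha$, etc.) together with translations account for exactly $3$ dimensions that should be quotiented out; this is why the parameter space has dimension $I-3$ rather than $I$. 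So fix an ordering so that $\varphi_1,\varphi_2,\varphi_3$ are the "gauge" directions and $\varphi_4,\ldots,\varphi_I$ are the genuine unstable directions indexed by $\boldsymbol{a}=(a_1,\ldots,a_{I-3})$.

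Next I would set up the fixed-point scheme. For a target $\boldsymbol{a}\in B_{\varepsilon_0}(0)$, seek $v(\theta,\tau)$ on $\mathbb{S}^1\times(-\infty,-1]$ solving \eqref{intro:eq:v-flow}, i.e. $v_\tau=\mathcal{L}_\Gamma(v)+E_\Gamma(v)$, with prescribed leading asymptotics $v(\theta,\tau)=\sum_{i=4}^I a_{i-3}e^{-\lambda_i\tau}\varphi_i(\theta)+(\text{lower order})$ as $\tau\to-\infty$. Decompose $v=v^-+v^0+v^+$ using the spectral projections onto the negative, zero, and positive eigenspaces of $\mathcal{L}_\Gamma$. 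The linear semigroup $e^{\tau\mathcal{L}_\Gamma}$ contracts as $\tau\to-\infty$ on the unstable part $v^-$ and expands on $v^++v^0$; the nonlinearity $E_\Gamma(v)$ is quadratically small by \eqref{intro:eq:v-flow}. One then writes an integral (Duhamel) formula: on the unstable modes prescribe the asymptotic data at $\tau=-\infty$ and integrate forward, while on the stable/center modes integrate backward from $\tau=-1$ with zero data. This defines a map $\mathcal{T}$ on a complete metric space
\[
X_{\mu,\beta}=\Bigl\{v\in C^{2,\beta}(\mathbb{S}^1\times(-\infty,-1]):\ \sup_{\tau\le -1}e^{\mu\tau}\|v(\cdot,\tau)\|_{C^{2,\beta}}<\infty\Bigr\}
\]
for a suitable weight $\mu$ strictly between $-\lambda_I$ and the next relevant rate, so that $\mathcal{T}$ is a contraction by parabolic Schauder estimates and the quadratic bound on $E_\Gamma$. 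Its fixed point is the desired ancient solution $v=\mathcal{S}(\boldsymbol{a})$; continuity and injectivity of $\mathcal{S}$ follow from the Lipschitz dependence of $\mathcal{T}$ on $\boldsymbol{a}$ and from reading off $\boldsymbol{a}$ from the leading-order projection of $v$.

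The asymptotic expansions \eqref{intro:eq:layer1}--\eqref{intro:eq:layer2} come from a finer analysis of the difference $w=\mathcal{S}(\boldsymbol{a})-\mathcal{S}(\boldsymbol{b})$. Under the stated hypothesis ($a_j=b_j$ for $j>k-3$ and $a_{k-3}\ne b_{k-3}$), subtracting the two equations gives $w_\tau=\mathcal{L}_\Gamma(w)+(E_\Gamma(\mathcal{S}(\boldsymbol{a}))-E_\Gamma(\mathcal{S}(\boldsymbol{b})))$, where the inhomogeneity is $O(\|w\|\cdot(\|\mathcal{S}(\boldsymbol{a})\|+\|\mathcal{S}(\boldsymbol{b})\|))$ hence of strictly faster decay than $w$ itself. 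A standard ODE-lemma argument on the spectral projections of $w$ (the Merle--Zaag / Caffarelli--Hardt--Simon alternative) then forces $w$ to be asymptotic to the slowest-decaying mode present, which is the $\lambda_k$-eigenspace: in the simple case $\lambda_{k-1}<\lambda_k$ this yields \eqref{intro:eq:layer1}, and when $\lambda_{k-1}=\lambda_k$ the projection lands in the full two-dimensional eigenspace, giving \eqref{intro:eq:layer2}. Setting $\boldsymbol{a}=\boldsymbol{0}$ gives $v\equiv 0$, i.e. $\bar u=h$, the shrinker.

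The main obstacle I expect is twofold. First, identifying and correctly handling the $3$-dimensional gauge kernel: the translations of $\mathbb{R}^2$ and the rotation $h_\theta$ produce exact zero/unstable modes that do not give new flows, so the fixed-point space must be taken transverse to them (or the solutions normalized to kill them), and one must check that after this quotient the count is exactly $I-3$ and the remaining unstable modes are genuinely realized. Second, making the contraction estimates uniform: because the nonlinearity $E_\Gamma(v)$ is controlled only by $|v_{\theta\theta}+v|^2$, one needs $C^{2,\beta}$ parabolic Schauder bounds with the right exponential weights, and near the critical values $\alpha=\tfrac{1}{k^2-1}$ (where extra kernel appears by Theorem~\ref{intro:thm:sep-L}) the spectral gaps degenerate, so the choice of weight $\mu$ and the separation of the center modes require care.
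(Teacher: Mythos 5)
Your overall strategy (weighted-norm contraction mapping plus a quotient by geometric motions) is the same family of argument as the paper's, but two of your key steps have genuine problems. First, you have misidentified the three ``gauge'' directions. Rotation corresponds to the \emph{zero} mode $h_\theta$ (and to $\cos k\theta,\sin k\theta$ only at the critical $\alpha$, again in the kernel), so it is not among the $I$ negative eigenvalues at all; making the fixed-point space transverse to it removes nothing from the unstable count. By Proposition \ref{prop:eig} the three unstable modes that must be removed are $\varphi_1=h$ (eigenvalue $-1-\alpha$, generated by time translation of the unrescaled flow, i.e.\ dilation of the rescaled one) and $\varphi_2,\varphi_3=\cos\theta,\sin\theta$ (eigenvalue $-1$, spatial translations); this is exactly the content of Proposition \ref{prop:rescaling_center}, which the paper uses to pass from the full $I$-parameter family of Theorem \ref{thm:exist-type1} to the $(I-3)$-parameter statement. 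With your normalization (transverse to rotations and translations) you would be left with $I-2$ free unstable parameters and the dilation mode unaccounted for. A second, more technical error: your Duhamel assignment is reversed. For decay as $\tau\to-\infty$ the stable and center modes must be obtained by integrating from $\tau=-\infty$ (integrating backward from $\tau=-1$ with zero data produces the backward-growing homogeneous solution $e^{-\lambda_j\tau}$, $\lambda_j>0$), while the unstable modes whose rate exceeds the weight must be given terminal data at $\tau=-1$; as written, the $\int_{-\infty}^{\tau}$ integrals for the fast unstable modes diverge, cf.\ Lemma \ref{lem:exp-exist}.

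The more serious gap concerns \eqref{intro:eq:layer1}--\eqref{intro:eq:layer2}. A single-weight contraction with weight $\mu$ just above $-\lambda_I$ gives only $\mathcal{S}(\boldsymbol{a})-\mathcal{S}(\boldsymbol{b})=O(e^{\mu\tau})$ through Lipschitz dependence, and the quadratic correction terms are only $O(e^{-2\lambda_I\tau})$; whenever $-\lambda_k>-2\lambda_I$ these corrections swamp $e^{-\lambda_k\tau}$, so the coefficient $(a_{k-3}-b_{k-3})$ of $\varphi_k$ cannot be read off from your fixed point. The Merle--Zaag/Caffarelli--Hardt--Simon alternative applied to $w=\mathcal{S}(\boldsymbol{a})-\mathcal{S}(\boldsymbol{b})$ only says that some spectral block eventually dominates; it neither identifies the rate as exactly $\lambda_k$ nor ties the leading coefficient to the prescribed data, and it does not use the hypothesis $a_i=b_i$ for $i>k$ at the level of the actual solutions. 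The paper's device for this is precisely the multi-band construction of Theorem \ref{thm:exist-type1}: the negative eigenvalues are partitioned into bands $J^{(l)}\subset((l+1)\lambda_I,l\lambda_I]$, the solution is built inductively as $\sum_l v^{(l)}$ with $v^{(l)}-\iota^{(l)}(\boldsymbol{a})$ in the band-adapted space $X^{(l)}$, and the band-wise uniqueness of each contraction forces the slow layers of the two solutions to coincide exactly when the slow data agree; the difference is then $\iota^{(l+1)}(\boldsymbol{a})-\iota^{(l+1)}(\boldsymbol{b})$ plus terms decaying strictly faster than $e^{-\lambda_k\tau}$, which is what produces the expansions with the exact coefficients. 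To repair your argument you would have to rerun the contraction for the difference in successively faster-weighted spaces, which in effect reconstructs the paper's layered scheme.
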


\begin{remark}
Notice that the first three eigenfunctions of $\mathcal{L}_{\Gamma}$ are $h,\cos\theta,\sin\theta$ by Proposition \ref{prop:eig}, which accounts for dilations and transitions of the non-rescaled $\alpha$-CSF. See Proposition \ref{prop:rescaling_center}. Therefore, we consider $(I-3)$-parameter family of ancient solutions rather than $I$-parameter.

Moreover, if $\Gamma=\Gamma_\alpha^c$, then rotations accounts for $1$-parameter. Namely, Theorem \ref{intro:thm:exist1} provides a $(I-4)$-parameter family ancient flows converging to a round shrinking circle up to rigid motions and dilations. 

In short, given $\frac{1}{k^2-1}\leq \alpha < \frac{1}{(k-1)^2-1}$ with $3\leq k \in \mathbb{N}$, by Theorem \ref{intro:thm:exist1} there exist, up to rigid motions and dilations, a $(2k-5)$-parameter family of closed convex ancient $\alpha$-CSFs converging to a round shrinking circle and a $(2m-3)$-parameter family of closed convex ancient $\alpha$-CSFs converging to a shrinking $m$-fold symmetric curve for each integer $3\leq m<k$. 
\end{remark}

In an following paper, the authors will classify ancient finite entropy $\alpha$-CSFs, and show that the solutions in Theorem \ref{intro:thm:exist1} are the all solutions up to transitions and dilations with exhibiting the layer structure \eqref{intro:eq:layer1}-\eqref{intro:eq:layer2}.
  
An outline of our paper is in order. In Section 2, we devote to studying the spectrum of the linear operator $\mathcal{L}$. In Section 3, we construct ancient solutions converge with finite entropy by contraction mapping theorem. 
   \bigskip

\noindent \textbf{Acknowledgements.} The authors are grateful to Christos Mantoulidis for fruitful discussion, and also thankful to Shibing Chen, Beomjun Choi, John Loftin, and Mohammad N. Ivaki for their comments and suggestions. K. Choi is supported by KIAS Individual Grant MG078901.
   
   \section{Spectra of  linearized operators}\label{sec:spectrum}
We begin by deriving the evolution equation of the support function $\bar u$ of the normalized flow $\bar \Gamma_\tau$ given by \eqref{eq:normalized_flow}.
\begin{proposition}\label{prop:u_evol}
Let $\bar \Gamma_\tau$ be a normalized $\alpha$-CSF satisfying \eqref{eq:normalized_flow}. The support function $\bar u$ of  $\bar \Gamma_\tau$ satisfies
  \begin{align}
  \bar u_\tau=-\bar \kappa^\alpha+\bar u=-(\bar u_{\theta\theta}+\bar u)^{-\alpha}+\bar u.
  \end{align}
\end{proposition}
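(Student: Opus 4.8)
The plan is to first record the evolution equation for the support function of the \emph{un-rescaled} flow $\Gamma_t$ and then transfer it to $\bar u$ through the scaling \eqref{eq:normalized_flow}. So the first step is: parametrize $\Gamma_t$ by the angle $\theta$ of its outward unit normal $\boldsymbol{\nu}(\theta)=(\cos\theta,\sin\theta)$ — legitimate because \eqref{eq:main-flow} preserves strict convexity on its existence interval — and use the classical support-function identities. The point of $\Gamma_t$ with outward normal $\boldsymbol{\nu}(\theta)$ is $\mathbf{X}(\theta,t)=u\,\boldsymbol{\nu}+u_\theta\,\boldsymbol{\nu}_\theta$, and its radius of curvature is $\kappa^{-1}=u_{\theta\theta}+u$. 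Since the inward normal is $\mathbf{N}=-\boldsymbol{\nu}$, the (parametrization-independent) normal speed of $\Gamma_t$ in the direction $\boldsymbol{\nu}$ is $\langle\kappa^\alpha\mathbf{N},\boldsymbol{\nu}\rangle=-\kappa^\alpha$. Differentiating $u(\theta,t)=\langle\mathbf{X}(\theta,t),\boldsymbol{\nu}(\theta)\rangle$ in $t$ with $\theta$ held fixed — so $\boldsymbol{\nu}$ does not depend on $t$ and only the $\boldsymbol{\nu}$-component of $\partial_t\mathbf{X}(\theta,t)$, i.e. the normal speed, survives — yields $u_t=-\kappa^\alpha=-(u_{\theta\theta}+u)^{-\alpha}$.

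Next I would transfer this to $\bar u$. Taking support functions with respect to the origin in \eqref{eq:normalized_flow} gives $\bar u(\theta,\tau)=c\,e^{\tau}\,u\!\left(\theta,-e^{-(1+\alpha)\tau}\right)$ with $c:=(1+\alpha)^{-1/(\alpha+1)}$. Writing $t=t(\tau)=-e^{-(1+\alpha)\tau}$, so that $t'(\tau)=(1+\alpha)e^{-(1+\alpha)\tau}$, and inverting the scaling to $u(\theta,t)=c^{-1}e^{-\tau}\bar u(\theta,\tau)$, I also get $u_{\theta\theta}+u=c^{-1}e^{-\tau}(\bar u_{\theta\theta}+\bar u)$.

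Finally, the chain rule gives
\[
\bar u_\tau = c\,e^{\tau}u + c\,e^{\tau}\,t'(\tau)\,u_t = \bar u - c(1+\alpha)\,e^{\tau}e^{-(1+\alpha)\tau}\big(u_{\theta\theta}+u\big)^{-\alpha}.
\]
Substituting $u_{\theta\theta}+u=c^{-1}e^{-\tau}(\bar u_{\theta\theta}+\bar u)$, the net power of $e^{\tau}$ is $\tau-(1+\alpha)\tau+\alpha\tau=0$, and the constant collapses to $c(1+\alpha)c^{\alpha}=(1+\alpha)c^{1+\alpha}=1$, which is precisely why the normalizing constant in \eqref{eq:normalized_flow} was chosen to be $(1+\alpha)^{-1/(\alpha+1)}$. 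Hence $\bar u_\tau=-(\bar u_{\theta\theta}+\bar u)^{-\alpha}+\bar u=-\bar\kappa^\alpha+\bar u$, which is the claim.

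The only genuinely non-computational point is the justification in the first step that $\theta$ is a global coordinate for all $t$ in the existence interval and that the support-function formulae hold there; this rests on the short-time existence theory and the preservation of strict convexity for convex $\alpha$-CSF, which I would invoke rather than reprove. Everything after that is a routine change of variables, with the choice of normalization constant doing the bookkeeping.
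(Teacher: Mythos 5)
Your proof is correct and follows essentially the same route as the paper: a direct chain-rule computation through the rescaling \eqref{eq:normalized_flow}, using the standard support-function identities $u=\langle \mathbf{X},(\cos\theta,\sin\theta)\rangle$ and $u_{\theta\theta}+u=\kappa^{-1}$. The only cosmetic difference is that the paper carries out the computation at the level of the position vector and curvature of the rescaled flow ($\partial_\tau\bar{\boldsymbol{X}}=\bar\kappa^\alpha\mathbf{N}+\bar{\boldsymbol{X}}$) and then projects onto the normal, whereas you first write the scalar equation $u_t=-(u_{\theta\theta}+u)^{-\alpha}$ for the unrescaled flow and then change variables; both yield the same bookkeeping with $(1+\alpha)c^{1+\alpha}=1$.
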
   
   
   \begin{proof}
   By using \eqref{eq:normalized_flow}, we have
   \begin{align}
   \bar \kappa=(1+\alpha)^{\frac{1}{1+\alpha}}e^{-\tau}\kappa,
   \end{align}
   and thus
\begin{align}
 \partial_\tau \bar{\boldsymbol{X}}=(1+\alpha)^{\frac{\alpha}{1+\alpha}}e^{-\alpha \tau}\partial_tX+\bar X= {\bar \kappa^\alpha}\mathbf{N}+\bar {\boldsymbol{X}}.
\end{align}   
Therefore, $\bar u=\langle \bar{\boldsymbol{X}},\mathbf{N}\rangle$ and $\bar u_{\theta\theta}+\bar u=\bar \kappa^{-1}$ yield the desired evolution equation.
   \end{proof}

We are interested in normalized ancient flows $\bar \Gamma_\tau$ converging to a shrinker $\Gamma=\bar \Gamma_\alpha^k$ or $\bar \Gamma_\alpha^c$ as $\tau \to -\infty$. Namely, the difference $v=\bar u-h$ converges to zero, where $h$ is the suppose function of $\Gamma$ satisfying \eqref{eq:h2}. Moreover, the evolution equation \eqref{intro:eq:v-flow} has the linearized operator $\mathcal{L}$ given by \eqref{eq:L-def}.
\begin{equation}
\mathcal{L}_\Gamma=\alpha h^{1+\frac{1}{\alpha}} (\partial_\theta^2 +1)+1.
\end{equation}
here $h$ is the support function of $\Gamma$. We shall abbreviate $\mathcal{L}_\Gamma$ as $\mathcal{L}$ whenever there is no confusion.
   
  We introduce  the space  $L^2_h(\mathbb{S}^1)=L^2(\mathbb{S}^1, h^{-1-1/\alpha}d\theta)$ with norm $||f||_h^2=\int_{\mathbb{S}^1}f^2h^{-1-1/\alpha}$. It is equipped with the inner product
   \begin{align}\label{eq:h-inner}
   (f,g)_h=\int_{\mathbb{S}^1}fgh^{-1-\frac{1}{\alpha}}d\theta.
   \end{align}
   Since $h>0$ on $\mathbb{S}^1$ and \eqref{eq:h2},  this norm is equivalent to the standard $L^2$ norm.

   It is easy to see that $\mathcal{L}$ is a self-adjoint operator on $L^2_{h}$. Since $\mathcal{L}$ is an elliptic operator on a compact space, thus $-\mathcal{L}$ has a sequence of eigenvalues $\lambda_1\leq \lambda_2\leq\cdots$. We remind that an eigenfunction $\varphi\in L_h^2(\mathbb{S}^1)$ and the corresponding eigenvalue $\lambda \in \mathbb{R}$ satisfy
   \begin{align}\label{eq:L-eig-h}
   {\alpha h^{1+\frac{1}{\alpha}}}(\varphi_{\theta\theta}+\varphi)+(\lambda+1)\varphi=0,\quad \text{ on }\quad \mathbb{S}^1.
   \end{align}
Moreover, there exists a sequence of  the pairs  $(\lambda_i,\varphi_i)$ of eigenvalues and eigenfunctions such that $\lambda_i\leq \lambda_{i+1}$, $\lim_{i\to \infty} \lambda_i= +\infty$, $(\varphi_i,\varphi_j)_h=\delta_{ij}$, and $\text{span}\{\varphi_1,\varphi_2,\cdots\}=L_h^2(\mathbb{S}^1)$.  
  
    In this section, we will study eigenfunctions with negative or zero eigenvalues of $-\mathcal{L}$.     
  
   \begin{proposition}\label{prop:eig}
   There are some known eigenvalues for $-\mathcal{L}$.
   \begin{enumerate}
   \item $\lambda=-1-\alpha$ is an eigenvalue with the eigenfunction $\varphi=h$. Since $h$ is always positive, $\lambda=-1-\alpha$ is the lowest  eigenvalue.
   \item $\lambda=-1$ is an eigenvalue with the eigenfunctions $\varphi=\sin\theta,\cos\theta$.
   \item $\mathcal{L}_{\bar \Gamma_{\alpha}^c}=\alpha (\partial^2_{\theta}+1)+1$ has eigenvalues 
   \begin{align}\label{eq:eig-h=1}
   \lambda_1=-\alpha-1,\quad \lambda_{2l}=\lambda_{2l+1}=\alpha(l^2-1)-1,\quad l\geq 1
   \end{align}
   with the eigenfunctions $\cos(l\theta)$ and $\sin(l\theta)$. Notice that $-\mathcal{L}$ has an eigenvalue $\lambda=0$ only when $\alpha=1/({l^2-1})$ for some $l\geq 2$. 
   
   \item $\mathcal{L}_{\bar \Gamma_{\alpha}^k}$ has zero eigenvalue $\lambda=0$ with eigenfunction $\varphi= h_\theta$. More importantly, $\lambda=0$ is simple.
    \end{enumerate}
   \end{proposition}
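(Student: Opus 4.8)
The plan is to verify each of the four claims essentially by direct computation, using the defining ODE \eqref{eq:L-eig-h} and the shrinker equation \eqref{eq:h2}, and then to argue the variational/positivity facts (lowest eigenvalue, simplicity).

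\textbf{Items (1)--(2): known eigenfunctions.} For (1), I would differentiate the shrinker equation \eqref{eq:h2}, or rather observe directly that if $h_{\theta\theta}+h = h^{-1/\alpha}$ then $\alpha h^{1+1/\alpha}(h_{\theta\theta}+h) + h = \alpha h^{1+1/\alpha}\cdot h^{-1/\alpha} + h = \alpha h + h = (1+\alpha)h$, so $\mathcal{L}h = (1+\alpha)h$, i.e. $-\mathcal{L}h = -(1+\alpha)h$; comparing with \eqref{eq:L-eig-h} gives $\lambda = -1-\alpha$ with eigenfunction $h$. Since $h>0$ everywhere, $h$ has no sign change, hence it is a ground state (an eigenfunction with nodal domains, by Courant-type nodal theory for this Sturm--Liouville problem on $\mathbb{S}^1$, must vanish somewhere unless it is the first one); I would invoke the standard fact that on a compact manifold the only eigenfunction of a self-adjoint Schrödinger-type operator that does not change sign is the lowest one. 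For (2), plug $\varphi = \cos\theta$ (or $\sin\theta$): then $\varphi_{\theta\theta}+\varphi = 0$, so \eqref{eq:L-eig-h} reduces to $(\lambda+1)\varphi = 0$, giving $\lambda = -1$. (Geometrically these come from translations of the non-normalized flow, cf. the remark.)

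\textbf{Item (3): the circle case.} When $\Gamma = \bar\Gamma_\alpha^c$ is the unit circle, $h \equiv 1$ solves \eqref{eq:h2}, and $\mathcal{L} = \alpha(\partial_\theta^2 + 1) + 1$ is a constant-coefficient operator on $\mathbb{S}^1$. Its eigenfunctions are exactly $\cos(l\theta), \sin(l\theta)$ for $l \ge 0$, and $\mathcal{L}(\cos l\theta) = \alpha(-l^2 + 1)\cos l\theta + \cos l\theta = \big(\alpha(1 - l^2) + 1\big)\cos l\theta$, so $-\mathcal{L}$ has eigenvalue $\alpha(l^2 - 1) - 1$ with multiplicity $2$ for $l \ge 1$ and $-\alpha - 1$ for $l = 0$ — which matches \eqref{eq:eig-h=1} after relabeling $\lambda_1, \lambda_{2l} = \lambda_{2l+1}$. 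The condition for a zero eigenvalue is $\alpha(l^2-1) - 1 = 0$, i.e. $\alpha = 1/(l^2-1)$ for some $l \ge 2$; this is immediate.

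\textbf{Item (4): the $k$-fold symmetric shrinker.} For $\Gamma = \bar\Gamma_\alpha^k$, differentiating \eqref{eq:h2} in $\theta$ gives $h_{\theta\theta\theta} + h_\theta = -\tfrac{1}{\alpha} h^{-1/\alpha - 1} h_\theta$, i.e. $(h_\theta)_{\theta\theta} + h_\theta = -\tfrac{1}{\alpha} h^{-1-1/\alpha} h_\theta$, so $\alpha h^{1+1/\alpha}\big((h_\theta)_{\theta\theta} + h_\theta\big) + h_\theta = -h_\theta + h_\theta = 0$, which is \eqref{eq:L-eig-h} with $\lambda = 0$: thus $h_\theta \in \ker(-\mathcal{L})$. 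For simplicity, I would argue that the ODE $\alpha h^{1+1/\alpha}(\varphi_{\theta\theta} + \varphi) + \varphi = 0$ is a second-order linear ODE, hence its solution space on $\mathbb{R}$ is two-dimensional, spanned by $h_\theta$ and a second independent solution $\psi$; the kernel of $\mathcal{L}$ on $\mathbb{S}^1$ consists of the $2\pi$-periodic solutions. Since $h_\theta$ already spans one dimension, it suffices to show no second linearly independent solution is periodic. I would do this via the Wronskian: $W = h_\theta \psi_\theta - (h_\theta)_\theta \psi$ satisfies $W' = -\tfrac{1}{\alpha h^{1+1/\alpha}} W$ after dividing the ODE appropriately — wait, more carefully, writing the equation in Sturm--Liouville form, the Wronskian of two solutions is (a nonzero constant times) $1/p(\theta)$ where $p = (\alpha h^{1+1/\alpha})^{-1}$; since $h_\theta$ has zeros (it is $2\pi/k$-antiperiodic-type, vanishing at the $2k$ vertices of the $k$-fold curve), evaluating $W$ at a zero $\theta_0$ of $h_\theta$ gives $W(\theta_0) = -h_{\theta\theta}(\theta_0)\psi(\theta_0)$, and if $\psi$ were also periodic with $\psi(\theta_0) \ne 0$ one builds a contradiction with the nodal-domain count — or more directly, a periodic $\psi$ independent of $h_\theta$ would make $\ker \mathcal{L}$ two-dimensional, but then by Sturm oscillation theory on $\mathbb{S}^1$ the number of negative eigenvalues (Morse index) would have a parity forced, and anyway the standard argument is: if both $h_\theta$ and $\psi$ are periodic and independent, then $h_\theta$ would be a non-first eigenfunction with exactly $2k$ zeros, forcing a contradiction with Courant's nodal domain theorem combined with the known Morse index $2k-1$ — but since the Morse index is established only later (Theorem \ref{thm:main-eig}), the cleanest self-contained route is the Wronskian/Sturm argument.

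\textbf{Main obstacle.} The only nontrivial point is the simplicity of the zero eigenvalue in (4): showing that $h_\theta$ is the \emph{only} periodic solution (up to scaling) of the second-order ODE. The difficulty is that one cannot simply quote constant-coefficient theory as in the circle case; one must use the structure of the ODE (Sturm--Liouville form, the Wronskian identity, and the fact that $h_\theta$ has simple zeros at the $2k$ vertices with $h_{\theta\theta} \ne 0$ there) to rule out a second independent periodic solution. I expect to set up the Wronskian $W(\theta)$ of $h_\theta$ with a second solution, note $W$ is a nonzero multiple of $\alpha h^{1+1/\alpha}$ (hence nowhere zero), and derive a contradiction from assuming the second solution is periodic by integrating the relation $\psi = h_\theta \int W/(h_\theta)^2$ across a zero of $h_\theta$ and checking the monodromy. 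The remaining items (1)--(3) are one-line substitutions.
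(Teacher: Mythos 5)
Items (1)--(3) of your proposal are fine and match the paper, which simply records them as direct verifications; likewise the first half of (4), that differentiating \eqref{eq:h2} puts $h_\theta$ in the kernel, is exactly the paper's computation. The genuine gap is the simplicity of $\lambda=0$, which is the only nontrivial content of (4). The paper does not prove it at all by elementary ODE means: it cites \citet[Lemma~7.3]{andrews2003classification}, whose argument relies on the specific structure of the shrinker family $U(\alpha,r,\theta)$ and the period function $\Theta(\alpha,r)$ (the same ingredients this paper later reuses in Lemma~\ref{lem:eta}). Your proposed Wronskian/monodromy route cannot close this gap as sketched. First, a small but telling error: after normalizing, the kernel equation is $\varphi''+\bigl(1+\tfrac{1}{\alpha}h^{-1-1/\alpha}\bigr)\varphi=0$, which has no first-order term, so the Wronskian of two solutions is a nonzero \emph{constant}, not a multiple of $\alpha h^{1+1/\alpha}$. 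More importantly, a nowhere-vanishing Wronskian is perfectly compatible with two independent $2\pi$-periodic solutions: this is exactly what happens for the same operator at $\lambda=-1$ (span of $\cos\theta,\sin\theta$) and for every double eigenvalue in the circle case of item (3). Hence no argument that uses only general facts about second-order periodic Sturm--Liouville problems (constancy of the Wronskian, the reduction-of-order formula $\psi=h_\theta\int W/h_\theta^2$, monodromy of a generic second solution) can yield simplicity; something specific to $h=h_{\alpha,k}$ must enter.

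Your fallback suggestions are also not available at this point in the paper: appealing to the Morse index $2k-1$ or to nodal-count consequences of Theorem~\ref{thm:main-eig} is circular, since the proof of that theorem (in particular Proposition~\ref{prop:neu-eig2}, which explicitly uses that the eigenspace of $\lambda=0$ is one-dimensional) depends on item (4). So as written, the simplicity claim in (4) remains unproved in your proposal; to repair it you would either have to quote Andrews's Lemma~7.3 as the paper does, or reproduce an argument of that type, e.g.\ exploit the $k$-fold and reflection symmetry of $h$ together with a solution $\eta=\partial_r U(r,\cdot)|_{r=r_*}$ of the kernel ODE with controlled boundary behavior (as in Lemma~\ref{lem:eta}), rather than a pure Wronskian count.
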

   \begin{proof}
   (1), (2), (3) are easy to verify. For (4), it is obtained by  differentiating \eqref{eq:h2} with respect to  $\theta$, which gives $h_{\theta}$ satisfies \eqref{eq:L-eig-h} when $\lambda=0$. Indeed, $h_\theta$ arises from rotations of ${\bar \Gamma_{\alpha}^k}$. \citet[Lemma 7.3]{andrews2003classification} shows that the eigenspace of $\lambda=0$ has dimension ONE, which is $span\{h_\theta\}$.  
   \end{proof}
   
In Proposition \ref{prop:eig}, we characterize all eigenfunctions of $-\mathcal{L}_{\bar \Gamma_{\alpha}^c}$ and  neutral eigenfunctions of $-\mathcal{L}_{\bar \Gamma_{\alpha}^k}$. Thus, we will focus on $\Gamma=\bar \Gamma_{\alpha}^k$ and consider negative eigenvalues of $-\mathcal{L}_{\bar \Gamma_{\alpha}^k}$. We shall simply write $\mathcal{L}=\mathcal{L}_{\bar \Gamma_{\alpha}^k}$ for the rest of this section.
   
   The following lemma is equivalent to \cite[Lemma 5]{andrews2002non} whose proof needs Brunn-Minkowski inequality there. We give a direct proof here.
  \begin{lemma}\label{lem:eig-NO}There is NO eigenvalue of $-\mathcal{L}$ in $(-1-\alpha,-1)$.
  \end{lemma}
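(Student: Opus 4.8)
The plan is to rule out eigenvalues in the open interval $(-1-\alpha,-1)$ by a direct spectral argument on $\mathbb{S}^1$, using the two known eigenfunctions $h$ (eigenvalue $-1-\alpha$) and $\{\cos\theta,\sin\theta\}$ (eigenvalue $-1$) from Proposition~\ref{prop:eig}. Since $h$ is strictly positive and is the eigenfunction for the lowest eigenvalue $-1-\alpha$, any eigenfunction $\varphi$ with eigenvalue $\lambda>-1-\alpha$ must be $L^2_h$-orthogonal to $h$ and hence must change sign. The eigenvalue equation \eqref{eq:L-eig-h} reads $\alpha h^{1+1/\alpha}(\varphi_{\theta\theta}+\varphi)+(\lambda+1)\varphi=0$, so when $\lambda\in(-1-\alpha,-1)$ we have $\lambda+1<0$, and the equation becomes $\varphi_{\theta\theta}+\varphi = -(\lambda+1)\alpha^{-1}h^{-1-1/\alpha}\varphi =: q(\theta)\varphi$ with $q>0$ everywhere on $\mathbb{S}^1$.

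First I would set up a comparison with the operator $\partial_\theta^2+1$, whose kernel on $\mathbb{R}$ is spanned by $\cos\theta,\sin\theta$. The key step is a Sturm-type oscillation/comparison argument: write $\varphi = r(\theta)\cos(\theta-\phi(\theta))$ in Prüfer-type coordinates, or more simply, observe that between consecutive zeros of $\varphi$ the function $\varphi_{\theta\theta}+\varphi = q\varphi$ has the same sign as $\varphi$, and compare the distance between consecutive zeros of $\varphi$ with $\pi$ (the zero spacing of solutions to $w_{\theta\theta}+w=0$). Because $q>0$, a Sturm comparison forces the zeros of $\varphi$ to be spaced strictly \emph{less} than $\pi$ apart, so on the circle of circumference $2\pi$ the eigenfunction $\varphi$ must have at least $4$ zeros (it cannot have exactly $2$ the way $\cos\theta$ does). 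An alternative and perhaps cleaner route: test the eigenvalue equation against $\cos\theta$ and $\sin\theta$. Since $(\partial_\theta^2+1)\cos\theta=0$, integrating $\cos\theta\cdot[\alpha h^{1+1/\alpha}(\varphi_{\theta\theta}+\varphi)+(\lambda+1)\varphi]=0$ by parts twice and using self-adjointness gives a relation between $(\varphi,\cos\theta)_h$-type quantities; combined with orthogonality constraints this should yield a contradiction for $\lambda$ strictly between $-1-\alpha$ and $-1$.

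The main obstacle I anticipate is handling the variable coefficient $h^{1+1/\alpha}$ carefully: the naive Sturm comparison is with $\partial_\theta^2+1$, but the eigenvalue equation is not in the self-adjoint Sturm-Liouville form $(\varphi_\theta)_\theta + (\text{stuff})\varphi=0$ after dividing — rather it already has the simple second-derivative form $\varphi_{\theta\theta}+(1-q)\varphi=0$ with $1-q<1$, so actually the comparison is straightforward: $1-q(\theta)<1$ pointwise means solutions oscillate \emph{slower}, giving zeros spaced \emph{more} than $\pi$ apart, hence \emph{at most} two zeros on $\mathbb{S}^1$. I would then argue that an eigenfunction with at most two zeros on the circle, being orthogonal to the positive function $h$, is impossible unless it has exactly the nodal structure of $\cos(\theta-\phi_0)$; but then a further comparison (now using that $q$ is not identically the constant that would make $\cos(\theta-\phi_0)$ an exact solution, since $h$ is non-constant for $\bar\Gamma_\alpha^k$) produces a strict inequality in the zero spacing that is incompatible with periodicity. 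Pinning down this last strict-inequality step — ruling out the borderline case where $\varphi$ looks like a shifted cosine — is where the real work lies; I expect it to follow from integrating the equation over one half-period and exploiting that $h$ is genuinely non-constant on the $k$-fold symmetric shrinker, so that $q(\theta)$ cannot equal the exact constant $1-(\lambda+1)/\text{(something)}$ needed for a pure cosine solution of the right period.
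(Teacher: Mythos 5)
Your overall strategy --- writing the eigenvalue equation \eqref{eq:L-eig-h} as $\varphi_{\theta\theta}+(1-q)\varphi=0$ with $q=-(\lambda+1)\alpha^{-1}h^{-1-1/\alpha}>0$ for $\lambda\in(-1-\alpha,-1)$, and running a Sturm comparison against $w_{\theta\theta}+w=0$ --- is viable and genuinely different from the paper's proof (the paper subtracts a multiple of $h$ from $\varphi$ to get a mean-zero function, applies Wirtinger's inequality, and combines this with the shrinker equation $h_{\theta\theta}+h=h^{-1/\alpha}$ and the orthogonality $(\varphi,h)_h=0$ to force a sign contradiction in one integrated identity). But as written your argument has a genuine gap at exactly the point you flag: you claim the decisive strict inequality in the zero spacing must come from the non-constancy of $h$ on $\bar\Gamma_\alpha^k$, and you leave the ``borderline shifted-cosine'' case open. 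That diagnosis is wrong: non-constancy of $h$ is irrelevant (it could not be the mechanism, since the statement also holds when $h$ is constant). The strictness comes solely from $q>0$, i.e.\ from $\lambda+1<0$ together with $h>0$. Concretely, if $a<b$ are consecutive zeros of $\varphi$ with $\varphi>0$ on $(a,b)$ and $b-a\le\pi$, set $w(\theta)=\sin(\theta-a)$, which is positive on $(a,b)$, and let $W=\varphi w'-\varphi' w$; then $W(a)=0$, $W'=-q\,\varphi w<0$ on $(a,b)$, while $W(b)=-\varphi'(b)w(b)\ge 0$ because the zero at $b$ is simple (ODE uniqueness) so $\varphi'(b)<0$, and $w(b)\ge 0$. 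This is a contradiction, so consecutive zeros are spaced strictly more than $\pi$ apart --- no borderline case survives.

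With that in hand, your intermediate step ``at most two zeros, then analyze the nodal structure'' is unnecessary (and your first pass even ran the comparison in the wrong direction before self-correcting): since $\lambda>-1-\alpha$ forces $(\varphi,h)_h=0$ with $h>0$, the eigenfunction changes sign and has at least two simple zeros, and the arcs between consecutive zeros then have total length strictly greater than $2\pi$, which is absurd on $\mathbb{S}^1$. Also, your proposed ``cleaner alternative'' of testing against $\cos\theta$ and $\sin\theta$ cannot work: these are exact eigenfunctions of $\mathcal{L}$ with eigenvalue $-1$ of $-\mathcal{L}$ (the factor $\partial_\theta^2+1$ annihilates them), so self-adjointness only returns the orthogonality $(\varphi,\cos\theta)_h=(\varphi,\sin\theta)_h=0$ for $\lambda\neq-1$ and carries no information about where $\lambda$ lies. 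In short: the main idea is sound and distinct from the paper's Wirtinger-based argument, but the proposal misattributes the source of the needed strictness and does not actually close the proof.
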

  \begin{proof}Suppose $\varphi$ is an eigenfunction of $-\mathcal{L}$ satisfying $(\varphi,h)_h=0$ and \eqref{eq:L-eig-h}. Then there exists $c$ such that $\tilde \varphi=\varphi-ch$ satisfy $\int_{\mathbb{S}^1}\tilde \varphi=0$. Then $\int_{\mathbb{S}^1}\tilde \varphi^2-|\tilde \varphi_\theta|^2\leq 0$. Multiplying \eqref{eq:L-eig-h} by $\tilde \varphi\,h^{-1-\frac{1}{\alpha}}$ and integrating  over $\mathbb{S}^1$ give
  \begin{align}\label{eq:eig-2}
  \alpha\int_{\mathbb{S}^1}(\varphi_{\theta\theta}+\varphi)\tilde \varphi+(\lambda+1)\int_{\mathbb{S}^1}\varphi\tilde \varphi\,h^{-1-\frac{1}{\alpha}}=0.
  \end{align}
  Let us simplify the left-hand side. First, using the fact $\int_{\mathbb{S}^1}\varphi\,h^{-\frac{1}{\alpha}}=0$, we have 
  \[\int_{\mathbb{S}^1}\varphi\tilde \varphi h^{-1-\frac{1}{\alpha}}=\int_{\mathbb{S}^1}\varphi^2h^{-1-\frac{1}{\alpha}}=(\varphi,\varphi)_h> 0.\]
  Second,
  \begin{align*}
  \int_{\mathbb{S}^1}(\varphi_{\theta\theta}+\varphi)\tilde \varphi=&\int_{\mathbb{S}^1}[\tilde \varphi_{\theta\theta}+\tilde \varphi]\tilde \varphi+c\int_{\mathbb{S}^1}[h_{\theta\theta}+h]\tilde \varphi\\
  =&\int_{\mathbb{S}^1}(\tilde \varphi^2-\tilde \varphi_\theta^2)+c\int_{\mathbb{S}^1}h^{-\frac{1}{\alpha}}(\varphi-ch)\leq -c^2\int_{\mathbb{S}^1}h^{-1-\frac{1}{\alpha}}\leq 0,
  \end{align*}
  where in the second equality we used \eqref{eq:h2}.

  If $\lambda\in(-1-\alpha,-1)$ is an eigenvalue of $-\mathcal{L}$, inserting the above two inequalities into the LHS of \eqref{eq:eig-2}, one could find out the LHS $<0$. Contradiction.
  \end{proof}

It follows from Proposition \ref{prop:eig} and Lemma \ref{lem:eig-NO} that $-\mathcal{L}$ has eigenvalues
\begin{align}\label{eq:L-eig-enum}
    \lambda_1<\lambda_2=\lambda_3< \lambda_4\leq\cdots
\end{align}
where $\lambda_1=-1-\alpha$, $\lambda_2=\lambda_3=-1$.

\begin{theorem}\label{thm:main-eig} Suppose $k\geq 3$, $\alpha\in (0,{1}/({k^2-1}))$. The negative eigenspace of $-\mathcal{L}_{\bar\Gamma_\alpha^k}$ has dimension $2k-1$. In particular, 
\begin{enumerate}
\item If $k$ is odd, every negative eigenvalue except $\lambda_1$ has the eigenspace of dimension two. If $k$ is even, every negative eigenvalue except $\lambda_1,\lambda_k,\lambda_{k+1}$ has the eigenspace of dimension two. In both cases, any eigenfunction of $\lambda_{2l}$ and $\lambda_{2l+1}$, $1\leq l\leq k-1$, have $2l$ zeros. 
\item Furthermore, $\lambda_{2k}=0$ and $\lambda_{2k+1}>0$ are simple, namely
\[\lambda_{2k-1}<\lambda_{2k}(=0)<\lambda_{2k+1}<\lambda_{2k+2}\leq \cdots.\]
In addition, both $\varphi_{2k}$ and $\varphi_{2k+1}$ have $2k$ nodal sets.
\end{enumerate}
\end{theorem}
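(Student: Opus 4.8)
The plan is to analyze the ODE eigenvalue problem \eqref{eq:L-eig-h} via Sturm oscillation theory, combined with the explicit $k$-fold symmetry of $h$ and the deformation structure of the family $\bar\Gamma_\alpha^k$ for $\alpha \in (0,1/(k^2-1))$. First I would rewrite \eqref{eq:L-eig-h} in Sturm--Liouville form: setting $p(\theta) = h^{-1-1/\alpha}$ (up to a constant), an eigenfunction satisfies $(\varphi_{\theta\theta}+\varphi) + \frac{\lambda+1}{\alpha} h^{-1-1/\alpha}\varphi = 0$, so after multiplying by $p$ the operator becomes a standard self-adjoint Sturm--Liouville operator on the circle with a periodic potential of period $2\pi/k$ (since $h$ has period $2\pi/k$). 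Floquet/Bloch theory for periodic Hill-type equations then tells us that each eigenvalue is at most double, and eigenfunctions come in two types: those genuinely $2\pi/k$-periodic, and anti-periodic-type ones picking up a phase; the nodal count on $\mathbb{S}^1$ is a multiple of $2$ because on a circle zeros come in sign-changing pairs. The precise correspondence between the eigenvalue index and the nodal number is exactly the content of the classical oscillation theorem for periodic boundary conditions: the $n$-th band has eigenfunctions with a fixed number of zeros in one period, and the zeros accumulate as one climbs the spectrum.

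Next I would pin down the first $2k+1$ eigenvalues by a homotopy argument in $\alpha$. At the endpoint $\alpha \nearrow 1/(k^2-1)$, Theorem \ref{prop:class-ben} says $\bar\Gamma_\alpha^k$ converges to the circle $\bar\Gamma_\alpha^c$, whose spectrum is computed explicitly in Proposition \ref{prop:eig}(3): $\lambda_1 = -\alpha-1$ simple, and $\lambda_{2l}=\lambda_{2l+1}=\alpha(l^2-1)-1$ double for $l\geq 1$, with eigenfunctions $\cos l\theta,\sin l\theta$ having $2l$ zeros. At $\alpha = 1/(k^2-1)$ the eigenvalue $\lambda_{2k}=\lambda_{2k+1}=0$, and the $l=k$ eigenfunctions $\cos k\theta, \sin k\theta$ are exactly the $k$-fold-symmetric ones — precisely the regime where the bifurcation producing $\bar\Gamma_\alpha^k$ occurs. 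By continuity of eigenvalues of $\mathcal{L}_{\bar\Gamma_\alpha^k}$ in $\alpha$ (the coefficients $h = h(\alpha)$ depend smoothly on $\alpha$ by Theorem \ref{prop:class-ben}, and the potential stays bounded away from degeneracy since $h>0$), and using Lemma \ref{lem:eig-NO} together with Proposition \ref{prop:eig}(1),(2),(4) to nail the bottom of the spectrum ($\lambda_1 = -1-\alpha$ simple, $\lambda_2=\lambda_3=-1$ double, $\lambda_{2k}=0$ simple with eigenfunction $h_\theta$), I would argue that the eigenvalue ordering \eqref{eq:L-eig-enum} persists and that no eigenvalue can cross zero in the open interval $\alpha \in (0,1/(k^2-1))$ except at the index $2k$. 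The key structural input: $h_\theta$ is $k$-fold symmetric and has exactly $2k$ zeros on $\mathbb{S}^1$ (it vanishes at the $2k$ critical points of the $k$-fold symmetric convex curve — the $k$ "vertices" and $k$ "edge midpoints"), so by the oscillation theorem $\lambda = 0$ sits at position $2k$, forcing exactly $2k-1$ negative eigenvalues counted with multiplicity.

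For the multiplicity and nodal statements in (1) and (2), I would use the period-$2\pi/k$ structure directly. Decompose $L^2_h(\mathbb{S}^1)$ into the $k$ Fourier-type sectors under the rotation $R_{2\pi/k}$; $\mathcal{L}$ commutes with $R_{2\pi/k}$, so each eigenvalue lives in one sector, and sectors $j$ and $k-j$ pair up to give double eigenvalues by complex conjugation, while the self-conjugate sectors ($j=0$ always, and $j=k/2$ when $k$ is even) give simple eigenvalues. This explains why $\lambda_1$ (constant-sign, sector $0$) is simple, why for even $k$ the eigenvalues $\lambda_k,\lambda_{k+1}$ in the $j=k/2$ sector are simple, and why everything else below $\lambda_{2k}$ is double — and it places $\lambda_{2k}=0$ (eigenfunction $h_\theta$, which is $k$-fold symmetric, i.e. sector $0$) as simple, already known from Andrews. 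The nodal counts $2l$ for $\lambda_{2l},\lambda_{2l+1}$ with $1\leq l\leq k-1$ then follow by continuity of the (integer-valued, hence constant) zero count along the homotopy from the circle, where the count is visibly $2l$.

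The main obstacle I anticipate is making the homotopy argument airtight — specifically, ruling out that two eigenvalues collide and split, or that the nodal count jumps, somewhere in the interior $\alpha \in (0, 1/(k^2-1))$, since a priori eigenvalue curves could cross. The symmetry decomposition handles multiplicity cleanly (eigenvalues in different sectors may cross harmlessly, but within a fixed sector the $1$D Sturm theory forbids crossings, and the nodal count within a sector is monotone in the eigenvalue), so the real work is organizing the bookkeeping across sectors and verifying that the total count of eigenvalues below $0$ is invariant — equivalently, that $0$ is never an eigenvalue for $\alpha \in (0,1/(k^2-1))$ other than via $h_\theta$, which is guaranteed because $h_\theta$ spans the kernel for all such $\alpha$ (Proposition \ref{prop:eig}(4)) and one checks the kernel sector ($j=0$) has no $\lambda=0$ eigenfunction other than $h_\theta$ while the paired sectors would force a $\lambda=0$ double eigenvalue, contradicting simplicity. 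That simplicity of $\lambda=0$, proved by Andrews, is the linchpin that freezes the index.
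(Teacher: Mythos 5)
Your outline gets the soft parts right (continuity of the spectrum in $\alpha$, the rotation--sector decomposition, simplicity of the kernel preventing interior zero--crossings), but it has a genuine gap exactly where the theorem is hardest: determining the Morse index at the bifurcation endpoint. Your homotopy argument shows the number of negative eigenvalues is constant on $(0,1/(k^2-1))$ and you evaluate it by letting $\alpha\nearrow 1/(k^2-1)$, where $\bar\Gamma_\alpha^k$ degenerates to the circle. But at that endpoint the circle has a \emph{double} zero eigenvalue (eigenfunctions $\cos k\theta,\sin k\theta$, both lying in your $j=0$ sector, the same sector as $h_\theta$). Along the branch $\bar\Gamma_\alpha^k$ this double eigenvalue splits: one branch is pinned at $0$ by rotation invariance ($h_\theta$), while the other branch (the ``even'' companion, Neumann-type on $[0,\pi/k]$) tends to $0$ as $\alpha\nearrow 1/(k^2-1)$ with a sign that continuity plus simplicity of the kernel cannot determine. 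Both scenarios $\lambda_{2k}<0=\lambda_{2k+1}$ (index $2k$) and $\lambda_{2k}=0<\lambda_{2k+1}$ (index $2k-1$) are consistent with everything you invoke, so your argument cannot conclude the claimed count $2k-1$, nor $\lambda_{2k+1}>0$. The paper closes exactly this gap with a hard, quantitative input: Proposition \ref{prop:neu-eig2} shows the second Neumann eigenvalue $\mu_2^{NN}$ on $[0,\pi/k]$ is strictly positive, and its proof requires Lemma \ref{lem:eta}, where the auxiliary solution $\eta=\partial_r U(r,\theta)|_{r=r_*}$ of the linearized ODE is built from Andrews' family of shrinkers and its sign behavior at $0$ and $\pi/k$ is extracted from the monotonicity of the period function $\Theta(\alpha,r)$. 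Some such structural input about the shrinker family (or an equivalent bifurcation-direction computation) is indispensable; ``continuity in $\alpha$'' alone cannot supply it.

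Two secondary points. First, your assertion that in the self-conjugate sector $j=k/2$ (for $k$ even) the eigenvalues $\lambda_k,\lambda_{k+1}$ are \emph{simple} is not justified: for periodic/anti-periodic Hill-type problems coexistence (double eigenvalues within one such sector) is not excluded by Sturm theory, and indeed the paper deliberately makes the weaker statement, leaving open whether $\mu_1^{DN}=\mu_1^{ND}$, i.e.\ whether $\lambda_k=\lambda_{k+1}$. Second, deducing the nodal counts ``by continuity of the zero count along the homotopy'' is delicate for the \emph{ordered} eigenvalues $\lambda_{2l},\lambda_{2l+1}$: branches from different sectors may cross as $\alpha$ varies, so the index-to-branch assignment can switch and the integer-valued count attached to a fixed index need not be constant; you would need either to rule out such crossings below $0$ or to argue as the paper does, purely at fixed $\alpha$, via the Courant nodal domain theorem together with the Sturm separation/comparison lemmas (Lemmas \ref{lem:eig-two}, \ref{lem:nodal-two}) and the reflection-symmetry dichotomy for simple eigenvalues (Lemma \ref{lem:eig-one}).
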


Easily one can see the dimension of eigenspace of each eigenvalue is at most $2$. This is because \eqref{eq:L-eig-h} is a second order ODE and it has at most two linearly independent solutions.

For a function $\varphi$,  the term \textit{zeros} (or \textit{nodal sets}) refers to the set $\{\theta:\varphi(\theta)=0\}$. The term \textit{nodal domain} refers to the connected components of the complement of the nodal sets.

\begin{lemma}\label{lem:nodal-nondeg}
Each eigenfunction $\varphi$ satisfies $\varphi^2+|\varphi'|^2>0$. Any eigenfunctions $\varphi$, except $\varphi\in span\{h\}$, has even number of zeros and even number of nodal domains.
\end{lemma}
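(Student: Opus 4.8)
The plan is to prove Lemma \ref{lem:nodal-nondeg} in two parts, both of which are elementary consequences of the ODE \eqref{eq:L-eig-h} satisfied by an eigenfunction $\varphi$.

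First, for the non-degeneracy claim $\varphi^2 + |\varphi'|^2 > 0$: I would argue by contradiction. Suppose $\varphi(\theta_0) = \varphi'(\theta_0) = 0$ at some point $\theta_0 \in \mathbb{S}^1$. Since \eqref{eq:L-eig-h} is a linear second-order ODE with smooth coefficients (recall $h > 0$ on $\mathbb{S}^1$ by \eqref{eq:h2}, so $\alpha h^{1+1/\alpha}$ is a smooth positive coefficient and we may rewrite the equation in the form $\varphi_{\theta\theta} + q(\theta)\varphi = 0$ with $q$ smooth), the standard uniqueness theorem for ODEs forces $\varphi \equiv 0$, contradicting that $\varphi$ is an eigenfunction. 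Hence no such $\theta_0$ exists.

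Second, for the parity of the number of zeros and nodal domains: the non-degeneracy just established shows that every zero of $\varphi$ is simple, so $\varphi$ changes sign at each of its zeros; consequently the zeros of $\varphi$ are isolated, finite in number on the compact $\mathbb{S}^1$, and the nodal domains are the open arcs between consecutive zeros, so the number of zeros equals the number of nodal domains. Going around the circle once, $\varphi$ must return to its starting sign, which is only possible if it changes sign an even number of times — so the number of zeros is even — \emph{provided $\varphi$ actually has at least one zero}. Thus the remaining point is to rule out $\varphi > 0$ everywhere (or $\varphi < 0$ everywhere) unless $\varphi \in \mathrm{span}\{h\}$. For this I would use that $h$ itself is a positive eigenfunction with the lowest eigenvalue $\lambda_1 = -1-\alpha$ (Proposition \ref{prop:eig}(1)): if $\varphi$ is an eigenfunction with no zeros, say $\varphi > 0$, and its eigenvalue is $\lambda$, then testing $\varphi$ against $h$ in the weighted inner product and using self-adjointness gives $(\lambda - \lambda_1)(\varphi, h)_h = 0$; since $\varphi > 0$ and $h > 0$ we have $(\varphi, h)_h > 0$, forcing $\lambda = \lambda_1$, and then $\varphi$ lies in the one-dimensional eigenspace $\mathrm{span}\{h\}$ (the lowest eigenvalue of a Sturm–Liouville operator is simple). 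Equivalently one can invoke that any two eigenfunctions of a one-dimensional Schrödinger-type operator corresponding to distinct eigenvalues must interlace their zeros, so a nowhere-vanishing eigenfunction must be a ground state.

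I do not expect any serious obstacle here; the whole statement is a packaging of ODE uniqueness plus the simplicity of the ground state. The only point requiring a little care is the logical flow in the last step: one must first exclude sign-definite eigenfunctions outside $\mathrm{span}\{h\}$ before concluding evenness, since a priori "even number of zeros" would be vacuously violated by a positive function with zero zeros if $0$ is not counted as even — but of course $0$ is even, so in fact even this edge case is fine and the statement "$\varphi \notin \mathrm{span}\{h\}$ has an even, hence positive, number of zeros" is what needs the ground-state argument. I would phrase the write-up to make clear that the content of the second sentence is really "$\varphi \notin \mathrm{span}\{h\}$ must change sign, and each sign change contributes a simple zero, so the count is even and at least two."
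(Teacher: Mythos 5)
Your proof is correct and follows essentially the same route as the paper: ODE uniqueness gives $\varphi^2+|\varphi'|^2>0$, so every zero is simple, $\varphi$ changes sign there, and going once around $\mathbb{S}^1$ forces an even number of zeros and hence of nodal domains. The only addition is your explicit exclusion of nowhere-vanishing eigenfunctions outside $\mathrm{span}\{h\}$ via orthogonality to $h$ and simplicity of the lowest eigenvalue --- a point the paper's proof leaves implicit --- which is a valid refinement rather than a different method.
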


\begin{proof}
If $\varphi(\theta_0)=\varphi'(\theta_0)=0$ at some point $\theta_0 \in \mathbb{S}^1$, then we have $\varphi=0$ on $\mathbb{S}^1$ by the uniqueness of solution to the second order ODE. Hence, $\varphi^2+|\varphi'|^2>0$ everywhere. Therefore, if $\varphi(\theta_0)=0$ at some $\theta_0\in \mathbb{S}^1 $ then $\varphi(\theta_0+\epsilon)\varphi(\theta_0-\epsilon)<0$ for small enough $\epsilon$. Namely, $\varphi$ change its signs at zeros. Hence, $\varphi$ has even number of zeros and thus it has even number of nodal domains. 
\end{proof}

\begin{lemma}\label{lem:eig-two}
Suppose that $\lambda$ has a two dimensional eigenspace.  Then, its eigenfunctions have the same number of nodal sets.
\end{lemma}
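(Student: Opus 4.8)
The plan is to rule out the possibility that the two eigenfunctions spanning the eigenspace have different numbers of zeros. Suppose $\varphi$ and $\psi$ both solve \eqref{eq:L-eig-h} with the same $\lambda$, and that $\varphi$ has strictly fewer zeros than $\psi$. Since \eqref{eq:L-eig-h} is a second-order linear ODE of Sturm-Liouville type (rewrite it as $(\alpha h^{1+1/\alpha}\varphi')' + (\alpha h^{1+1/\alpha} + (\lambda+1)h^{-1-1/\alpha}\cdot\text{(weight)})\varphi = 0$ after clearing, or more simply keep it in the form $\varphi'' + q(\theta)\varphi = 0$ with $q = 1 + (\lambda+1)\alpha^{-1}h^{-1-1/\alpha}$), the classical Sturm separation theorem applies: between any two consecutive zeros of $\psi$ there must lie a zero of $\varphi$, because $\varphi$ and $\psi$ are linearly independent solutions of the same equation. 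First I would invoke this: if $\psi$ has $2m$ zeros on $\mathbb{S}^1$ (even, by Lemma \ref{lem:nodal-nondeg}, assuming $\psi \notin \mathrm{span}\{h\}$), then $\varphi$ has at least $2m$ zeros as well, by counting interlacing zeros around the circle. This immediately forces the number of zeros of $\varphi$ to be $\geq$ that of $\psi$, and by symmetry the reverse, so they are equal.

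The one subtlety is the periodicity: Sturm separation is usually stated on an interval, and on $\mathbb{S}^1$ one must make sure the interlacing argument closes up consistently around the circle rather than producing an off-by-one discrepancy. I would handle this by lifting to a fundamental period: list the zeros of $\psi$ cyclically as $\theta_1 < \theta_2 < \cdots < \theta_{2m} < \theta_1 + 2\pi$, apply the separation theorem on each arc $(\theta_j,\theta_{j+1})$ to get at least one zero of $\varphi$ in each, and note these $2m$ zeros of $\varphi$ are distinct; hence $\varphi$ has at least $2m$ zeros, and since its number of zeros is also even (Lemma \ref{lem:nodal-nondeg}, as $\varphi\notin\mathrm{span}\{h\}$ because $h$ has a one-dimensional eigenspace and $\lambda=-1-\alpha$ there, whereas here $\dim=2$), equality of the two zero counts follows once we run the argument in both directions. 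Since the number of nodal sets equals the number of zeros here (each zero is a transversal sign change, by Lemma \ref{lem:nodal-nondeg}), this gives the claim.

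The main obstacle I anticipate is purely bookkeeping: making the cyclic Sturm separation argument airtight on $\mathbb{S}^1$, i.e. verifying that no zero of $\varphi$ is double-counted at the arc endpoints and that the parity constraint from Lemma \ref{lem:nodal-nondeg} is correctly used to upgrade "$\geq$" to "$=$" rather than leaving a gap of one. A clean way to avoid endpoint issues is to note that $\varphi(\theta_j)\neq 0$ for every zero $\theta_j$ of $\psi$ (otherwise $\varphi$ and $\psi$ would share a zero and, both solving a second-order linear ODE with the same value there, be proportional — contradicting independence, unless their derivatives also differ, but then they'd still be independent; the cleaner statement is that two independent solutions cannot vanish simultaneously, since the Wronskian $W = \varphi\psi' - \varphi'\psi$ is nowhere zero). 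Thus each arc $(\theta_j,\theta_{j+1})$ contains at least one, and in fact an odd number, of zeros of $\varphi$, all strictly interior, giving the count directly with no boundary ambiguity.
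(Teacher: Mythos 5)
Your proposal is correct and follows essentially the same route as the paper: both rest on the Sturm separation theorem for two linearly independent solutions of the same second-order ODE, with the nowhere-vanishing Wronskian ruling out shared zeros. The paper merely packages the interlacing as a single Wronskian sign contradiction (choosing a nodal interval of one eigenfunction strictly contained in one of the other), whereas you count interlacing zeros cyclically around $\mathbb{S}^1$ and conclude by symmetry; the content is the same.
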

\begin{proof}
This follows from the Sturm separation theorem. 
For reader's convenience, we give a proof. 
Suppose $span\{\varphi,\psi\}$ are the eigenspace of $\lambda$, where $\varphi,\psi$ are linearly independent. Then the Wronskian of $\varphi$ and $\psi$ is not zero for any $\theta$, that is 
\begin{align}\label{eq:wronskian}
W[\varphi,\psi](\theta)=\begin{vmatrix} \varphi(\theta) & \psi(\theta) \\ \varphi'(\theta) & \psi'(\theta)\end{vmatrix}\neq 0.
\end{align}
If $\psi$ and $\varphi$ has different number of nodal sets, by Lemma \ref{lem:nodal-nondeg}, then without loss of generality one nodal set $\varphi$ is strictly contained in one nodal set of $\psi$. That is, there exists $\theta_1$ and $\theta_2$ such that 
\[\varphi(\theta_1)=\varphi(\theta_2)=0,\quad \varphi'(\theta_1)\varphi'(\theta_2)<0,\quad \psi(\theta_1)\psi(\theta_2)>0.\]
However, we will get
\[W(\theta_1)W(\theta_2)=\varphi'(\theta_1)\psi(\theta_1)\varphi'(\theta_2)\psi(\theta_2)<0.\]
This is not possible, because $W$ does not change sign.
\end{proof}
Because the above lemma, we are eligible to say the number of nodal sets corresponding to an eigenvalue $\lambda$.

\begin{lemma}\label{lem:nodal-two}
Suppose an eigenvalue $\lambda_i$ has a two dimensional eigenspace, then for any $\lambda_j>\lambda_i$, the number of nodal sets of the eigenfunctions corresponding to $\lambda_j$ is greater than that of $\lambda_i$. Similarly, if $\lambda_j<\lambda_i$, then the number of nodal sets corresponding to $\lambda_j$ is less than that of $\lambda_i$.
\end{lemma}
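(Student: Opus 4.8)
The plan is to exploit the ordering of eigenvalues together with the fact, already recorded in Lemma \ref{lem:eig-two}, that the ``number of nodal sets'' is well-defined for any eigenvalue with a two-dimensional eigenspace. The strategy is a Sturm-type oscillation comparison argument, using the classical Sturm comparison theorem for the ODE \eqref{eq:L-eig-h} rewritten in a suitable self-adjoint (Sturm--Liouville) form, say $(p\varphi')' + q_\lambda \varphi = 0$ where $q_\lambda$ is strictly increasing in $\lambda$ (because increasing $\lambda$ increases the zeroth-order coefficient). The key monotonicity to extract is: if $\lambda < \mu$, then between any two consecutive zeros of an eigenfunction for $\lambda$, every eigenfunction for $\mu$ must have a zero. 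On the circle $\mathbb{S}^1$ this forces the (even, by Lemma \ref{lem:nodal-nondeg}) count of zeros of $\mu$-eigenfunctions to be at least that of $\lambda$-eigenfunctions.

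First I would set up the Sturm comparison precisely: given $\lambda_i < \lambda_j$ with eigenfunctions $\varphi_i$ (for $\lambda_i$) and $\varphi_j$ (for $\lambda_j$), suppose for contradiction that $\varphi_j$ has no zero strictly between two consecutive zeros $\theta_1 < \theta_2$ of $\varphi_i$. Form the Wronskian-type quantity $W(\theta) = p(\theta)(\varphi_i \varphi_j' - \varphi_i' \varphi_j)(\theta)$; its derivative is $(q_{\lambda_i} - q_{\lambda_j})\varphi_i \varphi_j$, which has a strict sign on $(\theta_1,\theta_2)$ once we normalize $\varphi_i$ and $\varphi_j$ to be positive there. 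Evaluating $W$ at $\theta_1$ and $\theta_2$ and using $\varphi_i(\theta_1) = \varphi_i(\theta_2) = 0$ with $\varphi_i'(\theta_1) > 0 > \varphi_i'(\theta_2)$ gives a contradiction with the monotonicity of $W$. This is the standard interlacing argument; the only wrinkle is that we are on the circle, so I would be careful to treat $\mathbb{S}^1 = \mathbb{R}/2\pi\mathbb{Z}$ and note that the zeros of $\varphi_i$ cut the circle into a cyclic sequence of arcs, in each of which $\varphi_j$ must vanish at least once, hence $\#\{\text{zeros of }\varphi_j\} \geq \#\{\text{zeros of }\varphi_i\}$. To upgrade ``$\geq$'' to ``strictly greater'' when $\lambda_i$ has a two-dimensional eigenspace, I would use the two independent eigenfunctions $\varphi, \psi$ of $\lambda_i$: if $\varphi_j$ had exactly the same number of zeros as $\varphi$, a further comparison (or the fact that $\varphi_j$ would then have to vanish once in each arc, and simultaneously $\varphi$ must vanish once between consecutive zeros of $\varphi_j$ by the reverse comparison) pins $\varphi_j$ into the eigenspace of $\lambda_i$, contradicting $\lambda_j \neq \lambda_i$; alternatively, and more cleanly, one compares $\varphi_j$ against a suitable linear combination of $\varphi$ and $\psi$ chosen to have a zero at a prescribed point, which produces one extra zero for $\varphi_j$. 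The case $\lambda_j < \lambda_i$ is entirely symmetric: run the same comparison with the roles reversed, using that $\lambda_i$ having a two-dimensional eigenspace lets us place a $\lambda_i$-eigenfunction zero wherever needed.

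The main obstacle I anticipate is not the comparison inequality itself but the bookkeeping needed to turn the two-dimensionality hypothesis at $\lambda_i$ into a \emph{strict} inequality in both directions, and doing so uniformly on the circle rather than on an interval — on $\mathbb{S}^1$ the ``extra zero'' has to be accounted for globally, and one must make sure the chosen linear combination of $\varphi$ and $\psi$ is not itself a scalar multiple of $\varphi$ (which it cannot be, since they are independent and we force a new zero). A clean way to organize this is: on the circle, the sequence of eigenvalues with the zero-count $2l$ spans exactly the $\{\cos l\theta, \sin l\theta\}$-type pair for the constant-coefficient model, and the Sturm comparison shows the zero-count is a nondecreasing step function of the eigenvalue index that strictly increases precisely when passing a genuinely two-dimensional eigenvalue; this is exactly what is needed in the proof of Theorem \ref{thm:main-eig}. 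I would therefore phrase the conclusion as: the number of nodal sets of $\lambda_j$-eigenfunctions is $> $ (resp. $<$) that of $\lambda_i$-eigenfunctions whenever $\lambda_j > \lambda_i$ (resp. $\lambda_j < \lambda_i$), under the standing assumption that $\lambda_i$ has a two-dimensional eigenspace.
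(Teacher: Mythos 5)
Your proposal is correct and follows essentially the same route as the paper: a Sturm(--Picone) interlacing comparison for the eigenvalue ODE, with the two-dimensionality of the $\lambda_i$-eigenspace exploited exactly as in the paper — by taking the linear combination of the two independent $\lambda_i$-eigenfunctions that vanishes at a prescribed zero of $\varphi_j$, so the shared zero plus the interlacing (and the even parity of zero counts) yields the strict inequality in both directions.
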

\begin{proof}
Suppose the the eigenspace of $\lambda_i$ is $span\{\psi_i,\tilde \psi_i\}$ and take any eigenfunction $\varphi_j$ corresponding to $\lambda_j$. Assume $\varphi_j(\theta_0)=0$ for some $\theta_0$,  One can find $\omega\in [0,2\pi]$ such that $(\cos \omega) \psi_{i}(\theta_0)+(\sin\omega)\tilde \psi_{i}(\theta_0)=0$.   Notice $\varphi_i=(\cos \omega) \psi_{i}+(\sin\omega) \tilde\psi_{i}$ is an eigenfunction of $\lambda_{i}$. Since $\varphi_i(\theta_0)=\varphi_j(\theta_0)=0$,  the conclusions follow from the Sturm-Picone comparison theorem. For reader's convenience, we sketch the proof of the case $\lambda_j>\lambda_i$. The other case is similar. It suffices to prove that there is at least a zero of $\varphi_j$ which lies strictly between any two consecutive zeros of $\varphi_i$.  Assume that $\varphi_i$ has two consecutive zeros $a$ and $b$,  and $\varphi_j$ has no zero in $(a,b)$. Without loss of generality, we assume $\varphi_i>0$ and $\varphi_j>0$ in $(a,b)$. 

Denote the Wronskian $W(\theta)=\varphi_i\varphi_j'-\varphi_i'\varphi_j$, then we can directly calculate $W'=(\lambda_i-\lambda_j)\frac{1}{\alpha }h^{-1-1/\alpha}\varphi_j\varphi_i$. We have the following Picone's identity
\begin{align}\label{eq:picone}
\left(\frac{\varphi_i}{\varphi_j}W\right)'=\frac{1}{\alpha }h^{-1-\frac{1}{\alpha}}(\lambda_i-\lambda_j)\varphi_i^2-\left(\frac{W}{\varphi_j}\right)^2
\end{align}
wherever $\varphi_j\neq 0$.

If $\varphi_j(a)>0$ and $\varphi_j(b)>0$, then we integrate \eqref{eq:picone} from $a$ to $b$. 
\[\frac{\varphi_i}{\varphi_j}W\Big|_{a}^{b}<0.\]
This contradicts to $\varphi_i(a)=\varphi_i(b)=0$.

If $\varphi_j(a)=0$ and $\varphi_j(b)>0$, then $\varphi_j'(a)>0$. Integrating \eqref{eq:picone} from $a-\epsilon$ to $b$ for $\epsilon>0$ small enough, we obtain 
\[\frac{\varphi_i}{\varphi_j}W\Big|_{a-\epsilon}^{b}<0\]
However, we know $\varphi_i(a-\epsilon)>0$, $\varphi_j(a-\epsilon)>0$ and $W(a-\epsilon)<0$ for sufficiently small $\epsilon>0$. Namely, the above quantity is positive. Hence it is an obvious contradiction.

 The case of $\varphi_j(a)>0$ with $\varphi_j(b)=0$ and $\varphi_j(a)=\varphi_j(b)=0$ can be ruled out in the same manner.
\end{proof}
Recall that the support function $h$ of $\bar \Gamma_k^\alpha$ is $k$-fold symmetric. It has  $2k$ critical points. By rotating $\bar \Gamma_k^\alpha$, we may assume 
\begin{equation}\label{eq:h-symmetry}
h'(n\pi/k)=0,
\end{equation}
for all $n\in \mathbb{Z}$. Then $h$ has even reflection symmetry with respect to $n\pi/k$ for any $n\in \mathbb{Z}$, namely $h(\theta)=h(2n\pi/k-\theta)$ for any $n\in \mathbb{Z}$.

 \begin{lemma}\label{lem:eig-one}
 Suppose that the eigenspace of $\lambda_i\neq \lambda_1$ is $span\{\varphi_i\}$, namely $\lambda_i$ is simple.  Then, either $\varphi_i$ has at least $2k$ zeros, or $\varphi_i$ has exactly $k$ zeros and $k$ must be even. In the second case, zeros of $\varphi_i$ are $\{2n\pi/k:n\in\mathbb{Z}\}$ or $\{(2n+1)\pi/k:n\in\mathbb{Z}\}$ modulo $2\pi$, where $h$ satisfies \ref{eq:h-symmetry}.
\end{lemma}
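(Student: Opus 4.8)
The plan is to use the full dihedral symmetry of $\mathcal{L}=\mathcal{L}_{\bar\Gamma_\alpha^k}$. After the normalization \eqref{eq:h-symmetry} the support function $h$ is invariant under the rotation $R\colon\theta\mapsto\theta-2\pi/k$ and under the reflection $S\colon\theta\mapsto-\theta$, so the coefficient $\alpha h^{1+1/\alpha}$ of $\mathcal{L}$ is $R$- and $S$-invariant; since $\partial_\theta^2$ commutes with translations and with $\theta\mapsto-\theta$, the operator $\mathcal{L}$ commutes with both $R$ and $S$. Hence $R\varphi_i$ and $S\varphi_i$ are again eigenfunctions for $\lambda_i$, and since $\lambda_i$ is simple they are scalar multiples of $\varphi_i$. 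Because $R^k=\mathrm{id}$, $S^2=\mathrm{id}$ and $\varphi_i$ is real, these scalars lie in $\{\pm1\}$ (and the scalar for $R$ is forced to be $+1$ when $k$ is odd). In particular the zero set $Z:=\{\theta\in\mathbb{S}^1:\varphi_i(\theta)=0\}$ satisfies $R(Z)=Z$ and $S(Z)=Z$.

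Next I would count $|Z|$ two ways. On the one hand, for $k\ge 3$ the rotation by $2\pi/k$ acts freely on $\mathbb{S}^1$, so every $R$-orbit has exactly $k$ elements; as $Z$ is a union of $R$-orbits, $|Z|$ is a multiple of $k$, and it is nonzero because, $\lambda_i$ being distinct from $\lambda_1=-1-\alpha$ (whose eigenfunction $h$ is positive, Proposition \ref{prop:eig}), $\varphi_i$ is $L^2_h$-orthogonal to $h$ and therefore changes sign. On the other hand, since $\varphi_i\notin\mathrm{span}\{h\}$, Lemma \ref{lem:nodal-nondeg} gives that $|Z|$ is even. Thus $|Z|$ is an even positive multiple of $k$: when $k$ is odd this forces $|Z|\ge 2k$, and in general the only alternatives are $|Z|=k$ (which requires $k$ even) or $|Z|\ge 2k$. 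This already yields the dichotomy in the statement except for the location of the zeros when $|Z|=k$.

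So suppose $|Z|=k$; then $k$ is even and $Z$ is a single $R$-orbit, $Z=\{\theta_0+2\pi n/k:n\in\mathbb{Z}\}$ for some $\theta_0$. Invariance under $S$ forces this orbit to be invariant under $\theta\mapsto-\theta$, i.e. $-\theta_0\equiv\theta_0\pmod{2\pi/k}$, hence $2\theta_0\in\frac{2\pi}{k}\mathbb{Z}$ and so $\theta_0\in\frac{\pi}{k}\mathbb{Z}$. Therefore $\theta_0\equiv0$ or $\theta_0\equiv\pi/k\pmod{2\pi/k}$, which is exactly $Z=\{2\pi n/k:n\in\mathbb{Z}\}$ or $Z=\{(2n+1)\pi/k:n\in\mathbb{Z}\}$ modulo $2\pi$, completing the argument.

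The step I expect to be the crux is this last one: one must exclude a simple eigenfunction whose $k$ zeros form a ``generic'' orbit $\{\theta_0+2\pi n/k\}$ with $\theta_0\notin\frac{\pi}{k}\mathbb{Z}$. This is precisely where the reflection symmetry \eqref{eq:h-symmetry} is needed beyond the $k$-fold rotational symmetry — the latter alone only gives $k\mid|Z|$ — and it enters through the elementary observation that a single $R$-orbit is stable under $\theta\mapsto-\theta$ only when it coincides with the fixed-point set $\{m\pi/k\}$ of one of the reflections. Everything else is bookkeeping, resting on the ODE uniqueness behind Lemma \ref{lem:nodal-nondeg} and on orthogonality of eigenfunctions with distinct eigenvalues in $L^2_h$.
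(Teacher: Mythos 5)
Your proof is correct, and its engine is the same as the paper's: simplicity of $\lambda_i$ forces $\varphi_i$ to transform by $\pm 1$ under the symmetries of $h$, and Lemma \ref{lem:nodal-nondeg} supplies the simple-zero/even-count input. The execution differs, though. The paper argues axis by axis: for each $n$ it records whether $\varphi_i$ is even or odd under $\theta\mapsto 2n\pi/k-\theta$ (equivalently $\varphi_i(n\pi/k)\neq 0$ or $\varphi_i(n\pi/k)=0$), and then runs a three-case analysis of the zeros on the fundamental domain $[0,\pi/k]$, reflecting them around the circle to conclude either at least $2k$ zeros or exactly $k$ zeros on the half-lattice with $k$ even. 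You instead use the rotation subgroup: invariance of the zero set $Z$ under $R$ gives that $|Z|$ is a multiple of $k$, orthogonality of $\varphi_i$ to the positive first eigenfunction $h$ in $L^2_h$ gives $Z\neq\emptyset$, Lemma \ref{lem:nodal-nondeg} gives $|Z|$ even, and the single reflection $S$ then pins a lone orbit to $\tfrac{\pi}{k}\mathbb{Z}$ when $|Z|=k$. Your orbit count is cleaner and makes the nonvanishing step explicit (the paper leaves it implicit); the paper's fundamental-domain version, on the other hand, yields as a byproduct the even/odd parity of $\varphi_i$ at each axis $n\pi/k$, which the paper reuses later (e.g.\ to regard $\varphi_{2k}$ restricted to $[0,\pi/k]$ as a Neumann eigenfunction and $\varphi_k$ as a DN/ND eigenfunction). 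One cosmetic slip: ``$|Z|$ is an even positive multiple of $k$'' should read ``$|Z|$ is even and a positive multiple of $k$,'' which is what your subsequent dichotomy actually uses.
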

\begin{proof}
By the symmetry of $h$,  $\varphi_i(2n\pi/k-\theta)$ is an eigenfunction of $\lambda_i$. Since the eigenspace of $\lambda_i$ has dimension one, we must have $\varphi_i(\theta)=c\varphi_i(2n\pi/k-\theta)$ for some $c(n)\neq 0$ and any $\theta\in \mathbb{S}^1$. Replacing $\theta$ by $2n\pi/k-\theta$, one gets $\varphi_i(2n\pi/k-\theta)=c(n)\varphi_i(\theta)$.  Thus, $c(n)$ must be $1$ or $-1$.
\bigskip

If $c(n)=1$, then $\varphi_i$ has even reflection symmetry with respect to $n\pi/k$. Then, we have $\varphi_i'(n\pi/k)=0$, and because Lemma \ref{lem:nodal-nondeg}, we also $\varphi_i(n\pi/k)\neq 0$ for such $n$. If $c(n)=-1$, then $\varphi_i$ has an odd reflection symmetry with respect to $n\pi/k$. Obviously, we have $\varphi_i(n\pi/k)=0$ for such $n$. Conversely, if $\varphi_i(n\pi/k)\neq 0$ then $\varphi_i$ has even reflection symmetry with respect to $n\pi/k$. If $\varphi_i(n\pi/k)=0$, then $\varphi_i$ has odd reflection symmetry with respect to $n\pi/k$.

\bigskip

Now, we consider $\varphi_i$ on $[0,n\pi/k]$. We divide it into three cases.

 First, if $\varphi_i$ has a zero in $ (0,n\pi/k)$, then the reflect symmetries of $\varphi$ guarantees at least $2k$ zeros in $\mathbb{S}^1$. 
 
 Second, if $\varphi_i(0)=\varphi_i(n\pi/k)=0$ and has no zero inside $ (0,n\pi/k)$, then after the reflection symmetries of $\varphi$ guarantees at least $2k$ zeros in $\mathbb{S}^1$.
 
 Last, $\varphi_i$ has only one zero on the endpoint of $[0,n\pi/k]$, say $\varphi_i(0)=0$ and $\varphi_i\neq 0 $ for $(0,n\pi/k]$. In this case, the previous paragraph shows that $\varphi_i$ has even reflection symmetry with respect to $(2n+1)\pi/k$ and has odd reflection symmetry with respect to $2n\pi/k$ for any $n$. Moreover, $k$ must be even since $\varphi_i$ has even number of nodal sets. Counting the zeros of $\varphi_i$, we find it is $k$ in this case. 
\end{proof}

Now we can prove the first part of Theorem \ref{thm:main-eig}.

\begin{proof}[Proof of Theorem \ref{thm:main-eig} Part (1)]
We will use the induction to prove that $\lambda_{2l}=\lambda_{2l+1}$ for any $1\leq l<k$, except that $\lambda_{k}\leq \lambda_{k+1}$ when $k$ is even. In any case, eigenfunctions corresponds to $\lambda_{2l}$ and $\lambda_{2l+1}$ have $2l$ nodal domains.
If $l=1$, then $\lambda_2=\lambda_3$ by Proposition \ref{prop:eig}. Any eigenfunction in $span\{\cos\theta,\sin\theta\}$ has 2 nodal domains.
Suppose the induction is complete for any $l$ such that $2l+1\leq k-1$, that is 
\[\lambda_1<\lambda_2=\lambda_3<\cdots<\lambda_{2l}=\lambda_{2l+1}<0\]  
and those eigenfunctions of $\lambda_{2j}=\lambda_{2j+1}$ have $2j$ nodal domains for $j\leq l$. 

\bigskip
It follows from Courant nodal domain theorem \cite[VI.6]{courant1953methods}, $\varphi_{2(l+1)}$ has at most $2(l+1)$ nodal domains. 
Because $2(l+1)\leq 2(k-1)$, Lemma \ref{lem:eig-one} implies that $\lambda_{2(l+1)}$ will be repeated unless $k$ is even and $2(l+1)=k$. 

\bigskip
Let us first consider the case that $\lambda_{2(l+1)}$ is repeated. We can continue the induction. The dimension of the eigenspace of each eigenvalue is at most 2, thus $\lambda_{2l+3}=\lambda_{2(l+1)}>\lambda_{2l+1}=\lambda_{2l}$.  Now we only need to prove $\varphi_{2l+2}$ and $\varphi_{2l+3}$ has $2l+2$ nodal domains. First, they have the same number of nodal sets by Lemma \ref{lem:eig-two}, while Lemma \ref{lem:nodal-two} implies that they must  have at least $2l+2$ nodal sets. Combining the previous upper bound on the number of nodal sets, our induction for $l+1$ is complete. 

\bigskip
If $k$ is even and $\lambda_{2(l+1)}=\lambda_k$ is simple, then Lemma \ref{lem:eig-one} says $\varphi_k$ has $k$ zeros.  Now, we consider $\lambda_{k+1}$. By the Courant nodal domain theorem and Lemma \ref{lem:nodal-nondeg}, any eigenfunction associated to $\lambda_{k+1}$ also has $k$ zeros. Therefore, the second part of Lemma \ref{lem:nodal-two} implies that $\lambda_{k+1}$ is also simple.
Then, we have $\lambda_{2l+2}<\lambda_{2l+3}<\lambda_{2l+4}$. By Lemma \ref{lem:eig-one}, $\lambda_{2l+4}$ is repeated, and its eigenfunctions has $2l+4=k+2$ zeros. The induction on $l+1$ and $l+2$ is complete. Now, since $2(l+2)>k$ in this case, Lemma \ref{lem:eig-one} shows the rest negative eigenvalue are all repeated, therefore it can be continued as the previous case.
\bigskip

Putting everything together, we have the following relations
\[
\lambda_1<\lambda_2=\lambda_3<\cdots<\lambda_{2l}= \lambda_{2l+1}<\cdots<\lambda_{2k-2}=\lambda_{2k-1}\]
except that when $k$ is even, the relation $\lambda_k=\lambda_{k+1}$ will be replaced by $\lambda_k\leq \lambda_{k+1}$.
All these eigenvalues are negative, because $\lambda=0$ has the eigenfunction $h_\theta$ with $2k$ nodal sets and  $\varphi_{2k-2}$ has $2k-2$ nodal sets. So, Lemma \ref{lem:nodal-two} says $\lambda_{2k-2}<0$.
\end{proof}

Next, in order to show Part (2) of Theorem \ref{thm:main-eig}, we consider the following eigenvalue problems in the Hilbert space $H^1([0,\pi/k])$ with various boundary conditions
\begin{align}
\psi''+\psi=-\frac{1}{\alpha }h^{-1-\frac{1}{\alpha}}(\mu+1)\psi,\quad \psi(0)=\psi(\pi/k)=0\tag{DD},\label{eq:DD-eig}\\
\psi''+\psi=-\frac{1}{\alpha }h^{-1-\frac{1}{\alpha}}(\mu+1)\psi,\quad \psi(0)=\psi'(\pi/k)=0\tag{DN},\label{eq:DN-eig}\\
\psi''+\psi=-\frac{1}{\alpha }h^{-1-\frac{1}{\alpha}}(\mu+1)\psi,\quad \psi'(0)=\psi(\pi/k)=0\tag{ND},\label{eq:ND-eig}\\
\psi''+\psi=-\frac{1}{\alpha }h^{-1-\frac{1}{\alpha}}(\mu+1)\psi,\quad \psi'(0)=\psi'(\pi/k)=0\tag{NN}.\label{eq:NN-eig}
\end{align}
Here we also assume $h$ satisfies \eqref{eq:h-symmetry}. According to the Sturm-Liouville theory, the eigenvalues $\mu_i^{AB}$ for the problems $(AB)$ where $A,B=D$ or $N$ satisfy,
\[\mu_1^{AB}<\mu_2^{AB}<\mu_3^{AB}<\cdots\]
and the eigenfunction $\psi_i^{AB}$ corresponding to $\mu_i^{AB}$ have $i-1$ zeros.

For \eqref{eq:NN-eig}, it is easy to know $h\in H^1([0,\pi/k])$ and it is an eigenfunction to the first eigenvalue $\mu_1^{NN}=-1-\alpha$. 
\begin{proposition}\label{prop:neu-eig2} For $0<\alpha<{1}/({k^2-1})$, we have $\mu_2^{NN}>0$ of \eqref{eq:NN-eig}.
\end{proposition}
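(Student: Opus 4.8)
The plan is to conjugate the weighted problem \eqref{eq:NN-eig} by the positive function $h$. Writing $\psi=h\,g$ and using \eqref{eq:h2} to substitute $h_{\theta\theta}=h^{-1/\alpha}-h$, a direct computation shows that $\psi$ solves $\psi''+\psi=-\frac1\alpha h^{-1-\frac1\alpha}(\mu+1)\psi$ if and only if
\begin{equation*}
(h^{2}g')'+(\mu+1+\alpha)\,\tfrac1\alpha h^{1-\frac1\alpha}\,g=0 ,
\end{equation*}
and, since $h>0$ with $h'(0)=h'(\pi/k)=0$, the boundary conditions transform as $\psi'(0)=\psi'(\pi/k)=0\iff g'(0)=g'(\pi/k)=0$ (and $\psi(0)=0\iff g(0)=0$). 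Thus, letting $M$ be the regular Sturm--Liouville operator $Mg:=-\alpha h^{-1+\frac1\alpha}(h^{2}g')'$ on $[0,\pi/k]$ with weight $\frac1\alpha h^{1-\frac1\alpha}>0$, the map $\psi\mapsto g=\psi/h$ identifies \eqref{eq:NN-eig} with the Neumann problem for $M$ under the shift $\nu=\mu+1+\alpha$, preserving the number of interior zeros hence the ordering. Since $g\equiv1$ is the Neumann ground state of $M$ with $\nu=0$ (equivalently $\psi=h$, $\mu=-1-\alpha=\mu_1^{NN}$), we get $\mu_2^{NN}=\nu_2^{N}[M]-1-\alpha$, where $\nu_2^{N}[M]$ is the first nonzero Neumann eigenvalue of $M$. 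So it suffices to prove $\nu_2^{N}[M]>1+\alpha$.

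Next I would identify a Dirichlet eigenvalue of $M$. Differentiating \eqref{eq:h2} gives $\mathcal{L}h_\theta=0$, so by the correspondence above (with $\mu=0$) the function $g^{*}:=h_\theta/h$ satisfies $Mg^{*}=(1+\alpha)g^{*}$. By \eqref{eq:h-symmetry} we have $g^{*}(0)=g^{*}(\pi/k)=0$, and $g^{*}$ has no zero in $(0,\pi/k)$ because $h$ has exactly the critical points $\{n\pi/k\}$; therefore $g^{*}$ is the \emph{first} Dirichlet eigenfunction of $M$, so $\nu_1^{D}[M]=1+\alpha$. By the standard Dirichlet--Neumann interlacing for regular Sturm--Liouville operators, $\nu_2^{N}[M]\ge \nu_1^{D}[M]=1+\alpha$.

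It remains to exclude equality, and this is the one genuinely non-formal step. Suppose $\nu_2^{N}[M]=1+\alpha$; then $1+\alpha$ is a Neumann eigenvalue of $M$, so there is $g\not\equiv0$ with $Mg=(1+\alpha)g$ and $g'(0)=g'(\pi/k)=0$ (and $g(0)\ne0$ by ODE uniqueness). Set $\psi:=h\,g$, so that $\mathcal{L}\psi=0$ on $[0,\pi/k]$ with $\psi'(0)=\psi'(\pi/k)=0$ and $\psi(0)\ne0$. Using that $h$ is even about every $n\pi/k$ (a consequence of \eqref{eq:h-symmetry} and $k$-fold symmetry) and that $\psi$ has vanishing derivative at $0$ and $\pi/k$, extend $\psi$ by successive even reflections across the points $n\pi/k$ to a $C^{2}$ function $\Psi$ on $\mathbb{S}^1$; the reflections preserve the equation, so $\mathcal{L}\Psi=0$ on $\mathbb{S}^1$, i.e. $\Psi\in\ker\mathcal{L}=\operatorname{span}\{h_\theta\}$ by Proposition \ref{prop:eig}(4). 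But $\Psi$ is even about $0$ while $h_\theta$ is odd about $0$ (as $h$ is even about $0$), so $\Psi\equiv0$, contradicting $\Psi(0)=\psi(0)\ne0$. Hence $\nu_2^{N}[M]>1+\alpha$, which gives $\mu_2^{NN}=\nu_2^{N}[M]-1-\alpha>0$.

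The main obstacle is the bookkeeping in the conjugation step—in particular checking that $g^{*}=h_\theta/h$ is the \emph{first} Dirichlet eigenfunction of $M$, so that $\nu_1^{D}[M]=1+\alpha$ rather than a higher Dirichlet eigenvalue. Once that is in place, $\nu_2^{N}[M]\ge1+\alpha$ is soft, and the strict inequality comes from the reflection argument combined with the simplicity of $\ker\mathcal{L}$ on $\mathbb{S}^1$. Note the hypothesis $\alpha<1/(k^{2}-1)$ is used precisely to ensure $\bar\Gamma_\alpha^{k}$ is non-circular, hence $h_\theta\not\equiv0$, which is what makes the final contradiction genuine (at $\alpha=1/(k^{2}-1)$ one has $\mu_2^{NN}=0$).
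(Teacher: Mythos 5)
Your conjugation $\psi=hg$ is correct, and so are the three soft ingredients: $g\equiv 1$ is the Neumann ground state (i.e.\ $\mu_1^{NN}=-1-\alpha$), $h_\theta/h$ is the first Dirichlet eigenfunction (i.e.\ $\mu_1^{DD}=0$, which the paper also uses), and your reflection argument excluding $\mu_2^{NN}=0$ (this is verbatim the paper's first step, resting on the simplicity of $\ker\mathcal{L}$ from Proposition \ref{prop:eig}(4)). The gap is the sentence ``by the standard Dirichlet--Neumann interlacing, $\nu_2^{N}\ge\nu_1^{D}$.'' There is no such general theorem for regular Sturm--Liouville problems: changing the boundary condition at \emph{one} endpoint is a rank-one perturbation and gives $\nu_n^{NN}\le\nu_n^{DN}\le\nu_{n+1}^{NN}$ and $\nu_n^{DN}\le\nu_n^{DD}\le\nu_{n+1}^{DN}$, so passing from Neumann to Dirichlet at \emph{both} endpoints only yields $\nu_1^{D}\le\nu_3^{N}$. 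In the constant-coefficient case one has equality $\nu_1^{D}=\nu_2^{N}$, and in general either strict inequality can occur, even for potential-free operators of your form $-(pg')'=\nu wg$: take $p\equiv 1$ and a weight $w$ concentrated in small neighborhoods of the two endpoints (a light string with heavy ends); then the antisymmetric free-end mode gives $\nu_2^{N}=O(1/M)$ while pinning the endpoints forces $\nu_1^{D}$ to stay bounded below, so $\nu_2^{N}<\nu_1^{D}$ by an arbitrarily large factor. Hence the inequality you invoke is exactly the nontrivial assertion $\mu_2^{NN}\ge\mu_1^{DD}=0$, i.e.\ the exclusion of $\mu_2^{NN}<0$, which is where the entire difficulty of the proposition lives; your reflection argument only removes the equality case and leaves the negative case untouched.

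The paper closes precisely this case with a structural input that your argument lacks: Lemma \ref{lem:eta} differentiates Andrews's family $U(\alpha,r,\theta)$ in $r$ at $r=r_*$ (using monotonicity of the period function $\Theta$ for $\alpha<1/3$) to produce a solution $\eta$ of the zero-eigenvalue equation with $\eta(0)>0$, $\eta'(0)=0$, $\eta(\pi/k)<0$, $\eta'(\pi/k)<0$; the fact $\mu_1^{DD}=0$ is then used to show $\eta$ has exactly one zero in $(0,\pi/k)$, and a Wronskian comparison of $\eta$ with a hypothetical Neumann eigenfunction of negative eigenvalue gives the contradiction. To repair your proof you would need to establish $\nu_2^{N}\ge\nu_1^{D}$ for this specific $h$ (not by a generic interlacing), e.g.\ by an argument of the above type; as written, the case $\mu_2^{NN}<0$ is not proved.
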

\begin{proof}
We shall write $\mu_2=\mu_2^{NN}$ for short within this proposition.

 First, $\mu_2$ can not be equal to zero. Otherwise, we will have an eigenfunction $\psi_2$ on $[0,\pi/k]$ such that
\[\psi_2''+\psi_2=-\frac{1}{\alpha }h^{-1-\frac{1}{\alpha}}\psi_2,\quad \psi_2'(0)=\psi_2'(\pi/k)=0.\]
By reflecting $\psi_2$ about $n\pi/k$ evenly for any $n$, we can extend $\psi_2$ to a smooth function defined on $\mathbb{S}^1$. This contradicts to the fact that the eigenspace of $\mathcal{L}$ for $\lambda=0$ is one dimensional, because $\psi_2\not\in span\{h_\theta\}$.


Second, suppose $\mu_2<0$ and $\psi_2$ is an eigenfunction. In the following Lemma \ref{lem:eta}, we get a function $\eta(\theta)$ with
\[\eta''+\eta=-\frac{1}{\alpha }h^{-1-\frac{1}{\alpha}}\eta\]
with $\eta(0)>0$ and $\eta'(0)=0$, $\eta(\pi/k)<0$, $\eta'(\pi/k)<0$.  See Figure \ref{fig:eta} for illustration.

We claim that $\eta$ has only one zero in $(0,\pi/k)$. 

In fact, on the contrary assume $\eta(\theta_0)=\eta(\theta_1)=0$ for $0<\theta_0<\theta_1<\pi/k$. One can define a new function $\tilde \eta(\theta)$ such that it equals $\eta(\theta)$ if $\theta\in[\theta_0,\theta_1]$ and zero elsewhere. Then obviously $\tilde \eta\in H^1([0,\pi/k])$ and
\begin{align}\label{eq:tilde-eta} 
\int_0^{\pi/k} \tilde\eta_\theta^2-\tilde \eta^2=\int_0^{\pi/k}\frac{1}{\alpha }h^{-1-1/\alpha} \tilde\eta^2
\end{align}
Recall the following variational characterization of $\mu_1^{DD}$, 
\[\mu_1^{DD}=\inf_{u\in H^1([0,\pi/k]), u\not\equiv 0}\left\{\left.\frac{\int_0^{\pi/k} u_\theta^2-u^2}{\int_0^{\pi/k} u^2\frac{1}{\alpha }h^{-1-1/\alpha}} -1\right|u(0)=u(\pi/k)=0\right\}.\]
The infimum is achieved by the first eigenfunction of \eqref{eq:DD-eig}. Using (3) in Proposition \ref{prop:eig}, we have 
$\mu_1^{DD}=0$ and eigenfunction $\psi_1^{DD}= h_\theta$, because $h_\theta$ does not change sign in $(0,\pi/k)$. 
However, \eqref{eq:tilde-eta} implies $\mu_1^{DD}<0$. Contradiction. The claim is proved. 


Let $a$ be the only zero of $\eta$ in $(0,\pi/k)$ and $b$ be that of $\psi_2$. Without loss of generality, we assume $\psi_2(\theta)>0$ when $\theta\in(0,b)$.  Otherwise one can work on $-\psi_2$.
Define $W[\psi_2,\eta]=\psi_2\eta'-\psi_2'\eta$. Then, we have $W(0)=0$, $W(\pi/k)>0$ and 
\[W'(\theta)=\frac{1}{\alpha }h^{-1-\frac{1}{\alpha}}\mu_2\psi_2\eta.\]

If $a\geq b$, then $W(b)\geq 0$ while $W'<0$ in $(0,b)$. This is impossible because of $W(0)=0$. 

If $a<b$, then $W(b)<0$, $W'<0$ on $(b,\pi/k)$. This contradicts to $W(\pi/k)>0$. Therefore $\mu_2$ can not be negative.
\end{proof}

\begin{figure}[ht]
\includegraphics[width=0.5\linewidth]{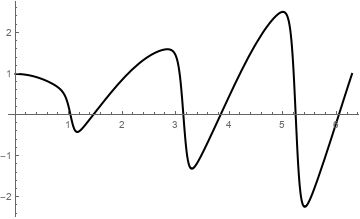}
\caption{The graph of $\eta$ with $\eta(0)=1$ when $\alpha=1/16$ and $k=3$.}
\label{fig:eta}
\end{figure}

\begin{lemma}\label{lem:eta}
Let $k\geq 3$ and $0<\alpha<1/(k^2-1)$. There exists a smooth function $\eta$ on $[0,2\pi]$ satisfying 
\begin{align}\label{eq:eta-ode}
\eta''+\eta+\frac{1}{\alpha}h^{-1-1/\alpha}\eta=0
\end{align}
and $\eta(0)>0$, $\eta'(0)=0$, $\eta(\pi/k)<0$ and $\eta'(\pi/k)<0$.
\end{lemma}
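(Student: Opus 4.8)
\textbf{Proof plan for Lemma \ref{lem:eta}.}
The idea is to construct $\eta$ as the unique solution of the initial value problem \eqref{eq:eta-ode} with $\eta(0)=1$, $\eta'(0)=0$, and then verify the two sign conditions at $\theta=\pi/k$. The sign conditions $\eta(\pi/k)<0$ and $\eta'(\pi/k)<0$ say precisely that $\eta$ must have exactly one zero in the open interval $(0,\pi/k)$ and be strictly decreasing through it and at the right endpoint. Since $\eta$ solves a linear second order ODE with the same Sturm-Liouville form as \eqref{eq:L-eig-h} (with ``eigenvalue parameter'' $\lambda=0$), zeros of $\eta$ interlace with zeros of other solutions at the same parameter, so the whole argument will be run by comparing $\eta$ with two explicit solutions: the Neumann ground state $h'$ (which satisfies \eqref{eq:eta-ode} by Proposition \ref{prop:eig}(4) and has $h'(0)=h'(\pi/k)=0$, $h'\neq 0$ on $(0,\pi/k)$) and the solution with data $\eta(0)=0$.

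First I would fix the normalization $\eta(0)=1,\eta'(0)=0$ and note existence, uniqueness and smoothness on $[0,2\pi]$ are automatic since the coefficients $1+\tfrac1\alpha h^{-1-1/\alpha}$ are smooth ($h>0$). Next I would show $\eta$ has at least one zero in $(0,\pi/k)$: this is where the hypothesis $\alpha<1/(k^2-1)$, equivalently $k^2-1<1/\alpha$, enters. A Sturm comparison against the constant-coefficient equation $w''+(1+\tfrac1\alpha \min h^{-1-1/\alpha})w=0$ or, more robustly, a comparison/oscillation estimate using $\int_0^{\pi/k}(1+\tfrac1\alpha h^{-1-1/\alpha})\,d\theta$ being large enough forces a sign change before $\pi/k$; alternatively one compares $\eta$ directly with $h'$ on $(0,\pi/k)$ via the Wronskian $W[\eta,h']$, using $h'(0)=0$ and the fact that $\eta$ (the ``even'' solution) and $h'$ (the ``odd'' solution) are linearly independent so $W$ never vanishes, which pins down the relative position of their zeros. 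The cleanest route is: if $\eta>0$ on all of $(0,\pi/k]$, then $\eta$ is a positive Neumann-type supersolution on $[0,\pi/k]$ competing with $h'$, and the variational characterization of $\mu_1^{DD}=0$ together with $\eta(0)=1>0$ yields a test function with negative Rayleigh quotient (mimicking the $\tilde\eta$ trick in the proof of Proposition \ref{prop:neu-eig2}), a contradiction; hence $\eta$ vanishes somewhere in $(0,\pi/k)$.

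Then I would show $\eta$ has \emph{exactly one} zero in $(0,\pi/k)$ and is decreasing at that zero and at $\pi/k$. Let $a\in(0,\pi/k)$ be the first zero. On $(0,a)$, $\eta>0$, and at $a$ we have $\eta'(a)<0$ by Lemma-type nondegeneracy (if $\eta'(a)=0$ then $\eta\equiv 0$). A second zero $a'\in(a,\pi/k)$ would give two consecutive zeros of $\eta$ inside $(0,\pi/k)$; but $h'$ has no zero in $(0,\pi/k)$, so by Sturm separation applied to the independent solutions $\eta$ and $h'$ at the same parameter, between two zeros of $\eta$ there must be a zero of $h'$ — contradiction. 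Hence $a$ is the unique zero, $\eta<0$ on $(a,\pi/k]$, giving $\eta(\pi/k)<0$; and $\eta'(\pi/k)<0$ follows because if $\eta'(\pi/k)\geq 0$ then, since $\eta(\pi/k)<0$, $\eta'$ would have to have vanished in $(a,\pi/k)$ with $\eta<0$ there, making $h'$, the companion solution, pick up behavior forcing an extra zero (equivalently, compare with the Neumann problem: $\eta$ would then be competing as an $(\mathrm{ND})$ or $(\mathrm{NN})$-type eigenfunction with wrong nodal count, contradicting $\mu_1^{NN}=-1-\alpha$, $\mu_2^{NN}>0$ from Proposition \ref{prop:neu-eig2}). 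Finally, smoothness on $[0,2\pi]$ and the reflection extension are immediate.

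The main obstacle is the \emph{existence of a zero in $(0,\pi/k)$} — i.e.\ genuinely using $\alpha<1/(k^2-1)$ rather than just qualitative Sturm theory; this is the only place the precise size of $\alpha$ relative to $k$ matters, and getting a clean comparison (whether via an explicit trigonometric subsolution, via the spectral fact $\mu_1^{DD}=0$ as in Proposition \ref{prop:neu-eig2}, or via a direct Wronskian argument with $h'$) is the crux. Everything after that — uniqueness of the zero and the two sign conditions at $\pi/k$ — is routine Sturm separation/comparison against $h'$ and the $(\mathrm{NN})$-eigenvalue information already established.
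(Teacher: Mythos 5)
Your overall framework (normalize the initial value problem $\eta(0)=1$, $\eta'(0)=0$ and verify the two endpoint signs) is legitimate, and the first half of your plan is sound in substance: uniqueness of the interior zero via Sturm separation against $h_\theta$ is fine, and the existence of a zero can indeed be made rigorous — though not by producing "a test function with negative Rayleigh quotient" (impossible, since $\mu_1^{DD}=0$ is the minimum); the correct version is the standard disconjugacy argument (if $\eta>0$ on the closed interval $[0,\pi/k]$, writing $\phi=\eta\psi$ shows $\int_0^{\pi/k}\bigl(\phi_\theta^2-\phi^2-\tfrac1\alpha h^{-1-1/\alpha}\phi^2\bigr)>0$ for every nonzero Dirichlet test function, contradicting that $h_\theta$ makes this quantity vanish), or even more directly the constant Wronskian $W=\eta h_{\theta\theta}-\eta' h_\theta$, which gives $\eta(0)h_{\theta\theta}(0)=\eta(\pi/k)h_{\theta\theta}(\pi/k)$ and hence $\eta(\pi/k)<0$ because $h_{\theta\theta}(0)$ and $h_{\theta\theta}(\pi/k)$ are nonzero of opposite signs.

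The genuine gap is the step you call routine, namely $\eta'(\pi/k)<0$. Your two justifications fail. First, invoking $\mu_2^{NN}>0$ from Proposition \ref{prop:neu-eig2} is circular: the part of that proposition ruling out $\mu_2^{NN}<0$ is proved in the paper \emph{using} Lemma \ref{lem:eta}; the only non-circular piece available there (simplicity of $\ker\mathcal{L}$, via even reflection) excludes just the borderline case $\eta'(\pi/k)=0$. Second, no Sturm separation/comparison with $h_\theta$ can detect this sign: the Wronskian identity loses the $\eta'$ term at $\pi/k$ because $h_\theta(\pi/k)=0$, and $h_\theta$ has no further zeros for $\eta$ to "force", so all qualitative nodal information is already consistent with either sign of $\eta'(\pi/k)$. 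Indeed, where $\eta<0$ one has $\eta''=-(1+\tfrac1\alpha h^{-1-1/\alpha})\eta>0$, so $\eta'$ increases after the zero, and whether it turns positive before $\pi/k$ is a quantitative property of $h$. The paper's proof makes this explicit: it constructs $\eta=\frac{d}{dr}U(\alpha,r,\theta)\big|_{r=r_*}$ as the variation through Andrews' family of arcs, and then \eqref{ap:eta-bdry} shows $\eta'(\pi/k)=-U_{\theta\theta}(r_*,\pi/k)\frac{d}{dr}\Theta(r_*)$, so the desired sign is \emph{equivalent} to the period-function monotonicity $\frac{d}{dr}\Theta>0$ imported from \cite{andrews2003classification} (while $\eta(0)>0$ and $\eta(\pi/k)<0$ come from differentiating the explicit formulas for $U(r,0)$ and $U(r,\Theta(r))$). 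In other words, the endpoint-derivative sign is exactly the nontrivial quantitative input of the lemma, and your proposal supplies no substitute for it; without such an ingredient (period monotonicity, or an equivalent computation) the proof is incomplete at its decisive point.
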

\begin{proof}
We will use some notations in \cite[Lemma 7.2]{andrews2003classification}. 
Consider the function $U(\alpha, r, \theta)$ defined by
\begin{align*}
U_{\theta \theta}+U &=U^{-\frac{1}{\alpha}} \\
U_{\theta}(\alpha, r, 0) =0,\quad&  U_{\theta}(\alpha, r, \Theta(\alpha, r)) =0 \\
U_{\theta}(\alpha, r, \theta) &<0, \quad 0<\theta<\Theta(\alpha, r) \\
U(\alpha, r, 0) &=r U(\alpha, r, \Theta(\alpha, r))
\end{align*}
where $\Theta=\Theta(\alpha,r)$ is the period function defined in \cite[Definition 2.1]{andrews2003classification}. Moreover, one can find 
\begin{align}
U(\alpha,r,\Theta(\alpha,r))&=\left(\frac{2\alpha}{1-\alpha}\cdot\frac{1-r^{1-1/\alpha}}{r^2-1}\right)^{\frac{\alpha}{\alpha+1}},\\
 U(\alpha,r, 0)&=rU(\alpha,r,\Theta(\alpha,r)).\label{eq:h-}
\end{align}

We will omit dependence on $\alpha$ of $U$ and $\Theta$ in what follows. It follows from \cite{andrews2003classification} that for each $\alpha\in(0,1/(k^2-1))$, there exists a unique $r^*\geq 1$ such that $\Theta(r_*)=\pi/k$. The support function $h$ is given by $h (\theta)=U(r_*,\theta)$. Define $\eta(\theta)=\frac{d}{dr}U(r,\theta)|_{r=r_*}$. Then obviously  $\eta$ satisfies \eqref{eq:eta-ode}. Since $U_\theta(r,0)=0$ for any $r$, we have $\eta_\theta(0)=0$.  Note that \eqref{eq:h-} implies 
\[U(r,0)=r\left(\frac{2\alpha}{1-\alpha}\cdot\frac{1-r^{1-1/\alpha}}{r^2-1}\right)^{\frac{\alpha}{\alpha+1}}\]
Differentiating  with respect to $r$ implies $\eta(0)>0$.


Since $U_\theta(r,\Theta(r))=0$, Differentiating with respect to $r$ gives
\begin{align}\label{ap:eta-bdry}
\eta_{\theta}(\Theta(r))+U_{\theta\theta}(r,\Theta(r))\frac{d}{dr}\Theta(r)=0.
\end{align}
Note that $d/dr\,\Theta(r)>0$ if $\alpha\in (0,1/3)$ by \cite{andrews2003classification}. Also $U_{\theta\theta}(r_*,\pi/k)\geq 0$, because $U(r_*,\theta)$ attains the minimum at $r=r_*$.  Here $U_{\theta\theta}(r_*,\pi/k)$ can not be 0, otherwise combined with $U_{\theta}(r_*,\pi/k)=0$, one gets $U$ is a constant. Inserting $r=r_*$ to the above equation,  one can see $\eta_\theta(\pi/k)<0$.

 On the other hand, it follows from \eqref{eq:h-} that 
 \[U(r,\Theta(r))=\left(\frac{2\alpha}{1-\alpha}\cdot\frac{1-r^{1-1/\alpha}}{r^2-1}\right)^{\frac{\alpha}{\alpha+1}}\]
 Taking the derivative with respect to $r$ of the above equation reveals $\frac{d}{dr}U(r,\Theta(r))<0$. Therefore $\eta(\pi/k)=\frac{d}{dr} U(r,\Theta(r))|_{r=r_*}<0$.
\end{proof}

\begin{proof}[Proof of Theorem \ref{thm:main-eig} Part (2)]
Since $\lambda_{2k-1}$ has a two dimensional eigenspace with $2k-2$ nodal sets, Lemma \ref{lem:nodal-two} implies $\lambda_{2k-1}<\lambda_{2k}$. The Courant nodal domain and Lemma \ref{lem:nodal-nondeg} imply that the eigenfunctions assosicated to $\lambda_{2k}$ must have $2k$ nodal sets. We need to show $\lambda_{2k}=0$. 

Towards a contradiction, suppose that $\lambda_{2k}<0$. $h_\theta$ is an eigenfunction corresponding to the eigenvalue $0$ and it also has $2k$ nodal sets. Thus, Lemma \ref{lem:nodal-two} implies that $\lambda_{2k}$ must be simple. Therefore, Lemma \ref{lem:eig-one} says that the eigenfunction $\varphi_{2k}$ is even-reflection-symmetric with respect to $n\pi/k$ for all $n\in \mathbb{N}$ and it has exactly $2k$ nodal sets. Thus, the restriction of $\varphi_{2k}$ on $[0,\pi/k]$ is a Neumann eigenfunction to \eqref{eq:NN-eig}. Since $\varphi_{2k}$ changes its sign exactly once in $[0,\pi/k]$, we have $\lambda_{2k}\geq \mu_2^{NN}$ which is the second Neumann eigenvalue. However, Proposition \ref{prop:neu-eig2} says $\mu_2^{NN}>0$.

 Since $\lambda_{2k-1}<0$ has $2k-2$ nodal domains and $0$ is an eigenvalue having $2k$ nodal domains, we have $\lambda_{2k}= 0$. $\lambda_{2k}$ is simple by Proposition \ref{prop:eig}, and thus we will have the next eigenvalue $\lambda_{2k+1}>0$. The nodal sets of eigenfunction associated to $\lambda_{2k+1}$ is $2k$ by Courant nodal domain theorem. Then the second part of Lemma \ref{lem:nodal-two} implies that $\lambda_{2k+1}$ also have to be simple.
\end{proof}

%

We completed the proof the Theorem \ref{thm:main-eig}. From now on, we discuss about  why we may not have $\lambda_k= \lambda_{k+1}$ when $k$ is even. These two eigenvalues are related to the $\eqref{eq:DN-eig}$ and $\eqref{eq:ND-eig}$.


\begin{lemma} We have $\mu_1^{DN}<0$ and  $\mu_1^{ND}<0$. 
\end{lemma}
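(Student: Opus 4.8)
The claim $\mu_1^{DN}<0$ and $\mu_1^{ND}<0$ should follow from the variational characterization of the first eigenvalue together with the test function $h_\theta$, exactly as in the argument for $\mu_1^{DD}$ in the proof of Proposition \ref{prop:neu-eig2}. Recall that
\[
\mu_1^{DN}+1=\inf\left\{\left.\frac{\int_0^{\pi/k}(u_\theta^2-u^2)}{\int_0^{\pi/k}\frac{1}{\alpha}h^{-1-1/\alpha}u^2}\;\right|\;u\in H^1([0,\pi/k]),\ u\not\equiv 0,\ u(0)=0\right\},
\]
and similarly for $\mu_1^{ND}$ with the boundary condition $u(\pi/k)=0$. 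So it suffices to produce one admissible test function whose Rayleigh quotient is $<0$, i.e. $\int_0^{\pi/k}u_\theta^2<\int_0^{\pi/k}u^2$.

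\textbf{Key steps.} First I would take $u=h_\theta$. Differentiating \eqref{eq:h2} shows $h_\theta$ satisfies \eqref{eq:L-eig-h} with $\lambda=0$, and by \eqref{eq:h-symmetry} we have $h_\theta(0)=h_\theta(\pi/k)=0$; moreover $h_\theta$ does not vanish in the open interval $(0,\pi/k)$ (as in the $\mu_1^{DD}$ discussion). Since $h_\theta$ is itself the first Dirichlet-Dirichlet eigenfunction with $\mu_1^{DD}=0$, it already satisfies both the (DN) boundary condition $u(0)=0$ and the (ND) condition $u(\pi/k)=0$, so it is admissible for both problems. Second, observe that the Rayleigh quotient of $h_\theta$ for either problem equals its quotient for the (DD) problem, which is exactly $\mu_1^{DD}+1=1$; hence plugging $h_\theta$ into the (DN) or (ND) infimum gives an upper bound $\mu_1^{DN}+1\le 1$, i.e. $\mu_1^{DN}\le 0$, and likewise $\mu_1^{ND}\le 0$. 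Third, to get the strict inequality I would argue that equality cannot hold: if $\mu_1^{DN}=0$ then the minimizer $\psi_1^{DN}$ would be a nontrivial solution of \eqref{eq:DN-eig} with $\mu=0$, namely $\psi''+\psi+\frac{1}{\alpha}h^{-1-1/\alpha}\psi=0$, $\psi(0)=0$, $\psi'(\pi/k)=0$; reflecting evenly across $\theta=\pi/k$ and oddly across $\theta=0$ (using the symmetry \eqref{eq:h-symmetry} of $h$) would extend $\psi_1^{DN}$ to a $2\pi$-periodic solution of \eqref{eq:L-eig-h} with $\lambda=0$ that is not a multiple of $h_\theta$ (it has a different zero set: it vanishes at $0$ but not at $\pi/k$), contradicting the simplicity of the zero eigenvalue of $\mathcal{L}$ from Proposition \ref{prop:eig}(4). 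An alternative, perhaps cleaner, route for strictness: monotonicity of Dirichlet eigenvalues under shrinking the boundary data — since the (DN) problem is obtained from (DD) by relaxing the right endpoint condition, $\mu_1^{DN}<\mu_1^{DD}=0$ strictly, because $h_\theta$ is not admissible in a way that would force equality (its derivative at $\pi/k$ is nonzero, so it is not a (NN)-type critical function there, yet it IS the (DD) minimizer, and relaxing a constraint at a point where the (DD) minimizer is active gives a strict drop).

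\textbf{Main obstacle.} The only subtle point is the strict inequality, not the non-strict bound. The weak bound $\le 0$ is immediate from using $h_\theta$ as a test function. For strictness I expect the reflection-and-simplicity argument to be the safest: extend a hypothetical zero-eigenvalue (DN) or (ND) eigenfunction to all of $\mathbb{S}^1$ by the reflection symmetries of $h$ and derive a second, linearly independent, zero eigenfunction of $\mathcal{L}_{\bar\Gamma_\alpha^k}$, contradicting \cite[Lemma 7.3]{andrews2003classification} as quoted in Proposition \ref{prop:eig}(4). One must check carefully that the extended function is genuinely in $\ker\mathcal{L}$ (smoothness across the reflection points follows because the appropriate one-sided derivative vanishes there by the boundary condition) and genuinely independent of $h_\theta$ (which holds because $h_\theta$ vanishes at every $n\pi/k$ whereas the extension of a (DN) eigenfunction does not vanish at $\pi/k$). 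With that in hand, $\mu_1^{DN}<0$ and $\mu_1^{ND}<0$ follow.
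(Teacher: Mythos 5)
Your soft bound is fine: $h_\theta$ is admissible for both (DN) and (ND) and its Rayleigh quotient gives $\mu_1^{DN},\mu_1^{ND}\le 0$. The gap is in the strictness argument you call ``safest''. If you extend a hypothetical zero-eigenvalue (DN) eigenfunction $\psi$ from $[0,\pi/k]$ by odd reflection about $\theta=0$ and even reflection about $\theta=\pi/k$, the extension $\Psi$ satisfies $\Psi(\theta+2\pi/k)=-\Psi(\theta)$ (the composition of the two reflections is the translation by $2\pi/k$, and it picks up one sign flip), hence $\Psi(\theta+2\pi)=(-1)^k\Psi(\theta)$. So $\Psi$ descends to a function on $\mathbb{S}^1$ only when $k$ is even; for $k$ odd (in particular the basic case $k=3$) you obtain no element of $\ker\mathcal{L}_{\bar\Gamma_\alpha^k}$ and therefore no contradiction with Proposition \ref{prop:eig}(4). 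This is consistent with Lemma \ref{lem:eig-one}, which produces eigenfunctions with exactly $k$ zeros only for even $k$; the same obstruction hits the (ND) case. Your ``alternative route'' is the one that actually works for all $k$, but as written it is heuristic: you should say that the (DN) infimum is attained, that a minimizer is a weak solution of \eqref{eq:DN-eig} satisfying the natural Neumann condition at $\pi/k$, so equality $\mu_1^{DN}=\mu_1^{DD}=0$ would force $h_\theta$ to satisfy $h_{\theta\theta}(\pi/k)=0$; and then justify $h_{\theta\theta}(\pi/k)\neq 0$ (if $h_\theta(\pi/k)=h_{\theta\theta}(\pi/k)=0$, uniqueness for the second order ODE satisfied by $h_\theta$ gives $h_\theta\equiv 0$, i.e. $h$ is the circle, not $\bar\Gamma_\alpha^k$). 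With those points supplied, that route closes the proof.

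For comparison, the paper avoids both issues by reflecting only once: extend $\psi_1^{DN}$ evenly across $\pi/k$ to the doubled interval $[0,2\pi/k]$, where by \eqref{eq:h-symmetry} it becomes a Dirichlet--Dirichlet eigenfunction with eigenvalue $\mu_1^{DN}$ that does not change sign, hence is the first Dirichlet eigenfunction there; meanwhile $h_\theta$ is a Dirichlet eigenfunction on $[0,2\pi/k]$ with eigenvalue $0$ that changes sign at $\pi/k$, so $0$ cannot be the first Dirichlet eigenvalue of that interval, giving $\mu_1^{DN}<0$ directly (and symmetrically for (ND) by reflecting about $\theta=0$). That argument is uniform in $k$ and needs neither the global kernel on $\mathbb{S}^1$ nor the variational strictness discussion.
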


\begin{proof} Suppose $\psi^{DN}$ is an eigenfunction corresponding to $\mu_1^{DN}$. Then make an even reflection of $\psi^{DN}$ with respect to $\pi/k$. We get $\psi^{DN}$ is an eigenfunction of 
\[\psi''+\psi=-\frac{1}{\alpha }h^{-1-\frac{1}{\alpha}}(\mu^{DN}+1)\psi,\quad \psi(0)=\psi(2\pi/k)=0\]
Since $\psi^{DN}$ does not change sign in $[0,2\pi]$, it must be the first eigenfunction for the above problem. Note that the reflection of $h_\theta$ also makes an eigenfunction corresponds to $0$ for the above problem. We must have $\mu^{DN}_1<0$.

The fact of \eqref{eq:ND-eig} can be proved through even reflection with respect to $\theta=0$.
\end{proof}

\begin{remark} 
If $\lambda_k$ is simple, then Lemma \ref{lem:eig-one} says $\varphi_k$ will have $k$ zeros. Moreover, Lemma \ref{lem:eig-one} indicates that the restriction of $\varphi_k$ on $[0,\pi/k]$ will give us a first eigenfunction of \eqref{eq:DN-eig} or \eqref{eq:ND-eig} corresponds to $\mu_1^{DN}$ or $\mu_1^{ND}$. In fact $\lambda_k=\min\{\mu_1^{DN}, \mu_1^{ND}\}$. For the same reason, $\lambda_{k+1}=\max\{\mu_1^{DN}, \mu_1^{ND}\}$. A priori we do not know $\mu_1^{DN}=\mu_1^{ND}$.
\end{remark}

%
%
%
%
%

\section{Construction of ancient solutions}\label{sec:construction}

In this section, we construct ancient solutions converging to a shrinker $\Gamma$ after rescaling by using the Morse index $I$ we characterized in Section \ref{sec:spectrum}. Let us denote the Morse index of $\mathcal{L}_{\Gamma}$ by $I(\mathcal{L}_\Gamma)$ . In the section \ref{sec:spectrum}, we showed  $I(\mathcal{L}_{\bar \Gamma_\alpha^k})=2k-1$ and $I(\mathcal{L}_{\bar \Gamma_\alpha^c})=2\lceil 1+1/\alpha\rceil-1$. Again, we shall simply suppress the notation to $I$ and $\mathcal{L}$. One should interpret the following for each case $\Gamma= \bar \Gamma_\alpha^k$ or $\bar \Gamma_\alpha^c$ respectively.


We begin by considering the inhomogeneous linear PDE
\[\partial_\tau v=\mathcal{L}v+E_\Gamma (v).\]
Fix $\beta\in(0,1)$, and for any $f:\mathbb{S}^1\times \mathbb{R}_-\to \mathbb{R}$ we define the seminorm
\[|f(\tau)|_{\C^\beta}=\sup_{(\theta_i,t_i)\in\mathbb{S}^1\times(\tau-1,\tau)}\left\{\frac{|f(\theta_1,t_1)-f(\theta_2,t_2)|}{|\theta_1-\theta_2|^{\beta}+|t_1-t_2|^{\beta/2}}:(\theta_1,t_1)\neq (\theta_2,t_2)\right\}.\]
We use the special symbol $\C$ to denote the parabolic norm in what follows. Notice that we write $\tau$ explicitly in $|f(\tau)|_{\C^\beta}$ to indicate that the parabolic norm is taken on  $\mathbb{S}^1\times(\tau-1,\tau)$.
For $l\geq 0$, define the norm
\begin{align}\label{eq:fnorm}
||f(\tau)||_{\C^{l,\beta}}:=\sum_{i+2j\leq l}\sup_{\mathbb{S}^1\times (\tau-1,\tau)}|\partial_\theta^i\partial_t^jf|+\sum_{i+2j=l}|\partial_\theta^i\partial_t^jf|_{\C^\beta}.
\end{align}

For some $\delta>0$, define the norm
\begin{align}\label{def:delta-norm}
||f||_{\C^{l,\beta,\delta}}:=\sup_{\tau\leq 0}\{e^{-\delta\tau}||f||_{\C^{2,\beta}(\mathbb{S}^1\times (\tau-1,\tau))}\}.
\end{align}
Suppose $X^\delta$ is the Banach space equipped with the norm $||f||_{\C^{l,\beta,\delta}}<\infty$.


We fix once and for all an $L^2_h$ orthonormal sequence of eigenfunctions $\varphi_j$ of $-\mathcal{L}$ such that $\mathcal{L}\varphi_j=\lambda_j\varphi_j$ and $(\varphi_j,\varphi_j)_h=1$.
Define $v_j=(v,\varphi_j)_h$ and $P_jv=(v,\varphi_j)_h\varphi_j$. We also define $P_{\leq j}=\sum_{i=1}^jP_i$  and 
\begin{align}\label{def:proj}
P_-=\sum_{j=0}^IP_j,\quad  P_+=\sum_{\{j:\lambda_j>0\}}P_j,\quad P_0=\sum_{\{j:\lambda_j=0\}}P_j.
\end{align}
In addition, we define
\[||f||_{L^{2,\delta}}=\sup_{\tau\leq 0}\{e^{-\delta\tau}||f(\cdot,\tau)||_{h}\}.\]

For the rest of this section, we will always choose $\delta$ as some positive constant different from $- \lambda_j$ for any $j$. Denote $J=\{j:\lambda_j<-\delta\}\subset\{1,\cdots,I\}$. For example, $J=\emptyset$ if $\delta>\lambda_1=-1-\alpha$. 
\begin{lemma}\label{lem:exp-exist}Fix any $0<\delta\not\in\{-\lambda_j\}_{j=1}^\infty $ and recall the operator $\mathcal{L}$ in \eqref{eq:L-def}.  If $||f||_{L^{2,\delta}}<\infty$,  then the equation
\begin{align*}
\partial_\tau u-\mathcal{L}u=f(\theta,\tau),\quad \text{ on }\quad\mathbb{S}^1\times\mathbb{R}_-
\end{align*}
has  a unique solution $u$ satisfying $||u||_{L^{2,\delta}}<\infty$ and $ P_{j}(u(\cdot, 0))=0$ for $j\in J$. Furthermore, there exists $C=C(\alpha,\beta,\delta)$ such that $||u||_{L^{2,\delta}}\leq C||f||_{L^{2,\delta}}$ and $||u||_{\C^{2,\beta,\delta}}\leq C||f||_{\C^{0,\beta,\delta}}$ hold. 
\end{lemma}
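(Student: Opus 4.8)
The plan is to diagonalize the problem along the orthonormal eigenbasis $\{\varphi_j\}$ and solve a decoupled family of scalar ODEs in $\tau$, then upgrade to the parabolic Hölder estimate by interior Schauder theory. Write $u(\theta,\tau)=\sum_j u_j(\tau)\varphi_j(\theta)$ and $f(\theta,\tau)=\sum_j f_j(\tau)\varphi_j(\theta)$ with $u_j=(u,\varphi_j)_h$, $f_j=(f,\varphi_j)_h$. Projecting the PDE $\partial_\tau u-\mathcal{L}u=f$ onto $\varphi_j$ gives the scalar ODE $u_j'-\lambda_j u_j=f_j$ on $(-\infty,0]$. The hypothesis $\|f\|_{L^{2,\delta}}<\infty$ yields $|f_j(\tau)|\le \|f(\cdot,\tau)\|_h\le Ce^{\delta\tau}$ for $\tau\le 0$ (using that the $\varphi_j$ are $L^2_h$-orthonormal), so each $f_j$ decays like $e^{\delta\tau}$ as $\tau\to-\infty$.

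The second step is to choose, for each $j$, the \emph{unique} solution $u_j$ of $u_j'-\lambda_j u_j=f_j$ with $\|e^{-\delta\tau}u_j\|_{L^\infty(-\infty,0]}<\infty$ and satisfying the stated normalization at $\tau=0$. The dichotomy is governed by the sign of $\lambda_j+\delta$. If $\lambda_j>-\delta$ (i.e. $j\notin J$), the homogeneous solutions $e^{\lambda_j\tau}$ grow too slowly (or decay) and cannot be added without violating the decay requirement only in a trivial direction; here the bounded-weighted solution is $u_j(\tau)=\int_{-\infty}^{\tau}e^{\lambda_j(\tau-s)}f_j(s)\,ds$, which is well-defined because $e^{\lambda_j(\tau-s)}e^{\delta s}$ is integrable in $s$ over $(-\infty,\tau)$ precisely when $\delta>-\lambda_j$, and it obeys $|u_j(\tau)|\le \frac{C}{\delta+\lambda_j}e^{\delta\tau}$; one checks $u_j(0)=\int_{-\infty}^0 e^{-\lambda_j s}f_j(s)ds$ is finite but is \emph{not} required to vanish (consistent with imposing $P_j(u(\cdot,0))=0$ only for $j\in J$). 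If $\lambda_j<-\delta$ (i.e. $j\in J$), then $e^{\lambda_j\tau}$ decays faster than $e^{\delta\tau}$, so adding any multiple of it preserves the weighted bound; uniqueness is then restored by the boundary condition $u_j(0)=0$, forcing $u_j(\tau)=\int_{0}^{\tau}e^{\lambda_j(\tau-s)}f_j(s)\,ds$, and one estimates $|u_j(\tau)|\le \frac{C}{|\delta+\lambda_j|}e^{\delta\tau}$ by splitting the integral or integrating by parts. Assembling $u=\sum_j u_j\varphi_j$ and using Parseval in $L^2_h$ gives $\|u(\cdot,\tau)\|_h^2=\sum_j|u_j(\tau)|^2\le C e^{2\delta\tau}\sum_j(\delta+\lambda_j)^{-2}\|f\|_{L^{2,\delta}}^2$; since $\lambda_j\to+\infty$ and $\delta$ is fixed away from $\{-\lambda_j\}$, the series $\sum_j(\delta+\lambda_j)^{-2}$ converges, so $\|u\|_{L^{2,\delta}}\le C\|f\|_{L^{2,\delta}}$. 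Uniqueness of the whole solution follows because the difference of two solutions projects to homogeneous ODE solutions $c_je^{\lambda_j\tau}$, each of which is killed by the weighted bound (for $j\notin J$) or by the vanishing-at-$0$ condition (for $j\in J$).

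The third step is the parabolic Hölder bound $\|u\|_{\C^{2,\beta,\delta}}\le C\|f\|_{\C^{0,\beta,\delta}}$. Since $\mathcal{L}=\alpha h^{1+1/\alpha}(\partial_\theta^2+1)+1$ is a uniformly parabolic operator on $\mathbb{S}^1$ with smooth (in particular $C^\beta$) coefficients, standard interior parabolic Schauder estimates apply: on any cylinder $\mathbb{S}^1\times(\tau-2,\tau)$ one has $\|u\|_{\C^{2,\beta}(\mathbb{S}^1\times(\tau-1,\tau))}\le C\big(\|f\|_{\C^{0,\beta}(\mathbb{S}^1\times(\tau-2,\tau))}+\|u\|_{L^2(\mathbb{S}^1\times(\tau-2,\tau))}\big)$. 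Multiplying by $e^{-\delta\tau}$, using the already-established $\|u\|_{L^{2,\delta}}\le C\|f\|_{L^{2,\delta}}\le C\|f\|_{\C^{0,\beta,\delta}}$ to control the lower-order term (the $e^{-\delta\tau}$ factor is comparable over a cylinder of fixed length because $\delta$ is fixed), and taking the supremum over $\tau\le 0$ yields the claim with a constant depending only on $\alpha,\beta,\delta$.

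I expect the main obstacle to be bookkeeping the two-sided dichotomy cleanly — in particular verifying that the representation formulas $\int_{-\infty}^\tau$ (for $j\notin J$) and $\int_0^\tau$ (for $j\in J$) are \emph{exactly} the ones singled out by ``$\|u\|_{L^{2,\delta}}<\infty$ together with $P_j(u(\cdot,0))=0$ for $j\in J$,'' and that no spurious freedom remains. The convergence of $\sum_j(\delta+\lambda_j)^{-2}$ (Weyl-type asymptotics $\lambda_j\sim cj^2$ for this one-dimensional Sturm--Liouville operator) and the fact that $\delta$ avoids the discrete set $\{-\lambda_j\}$ — so each denominator $\delta+\lambda_j$ is bounded below in absolute value by a positive constant — are what make the estimates uniform; this should be remarked but is routine. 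Passing from the $L^2_h$-based bound to the $\C^{2,\beta}$-based bound is a black-box application of Schauder theory and carries no real difficulty beyond noting the coefficients of $\mathcal{L}$ are smooth and the operator is uniformly elliptic in $\theta$ since $h>0$ on $\mathbb{S}^1$.
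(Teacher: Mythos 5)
Your proposal is correct and is essentially the paper's own argument: expand in the $L^2_h$-orthonormal eigenbasis, solve each mode by a Duhamel integral from $-\infty$ for $j\notin J$ and by an integral based at $\tau=0$ (enforcing $P_j(u(\cdot,0))=0$) for $j\in J$, obtain the weighted $L^2_h$ bound, and upgrade to $\C^{2,\beta,\delta}$ via interior parabolic Schauder estimates on unit cylinders; the only methodological difference is that the paper gets the $L^{2,\delta}$ bound by a Cauchy--Schwarz with auxiliary rates $\delta',\delta''$ and then sums $\sum_j|f_j(s)|^2=\|f(\cdot,s)\|_h^2$, whereas you use per-mode sup bounds together with summability of $\sum_j(\delta+\lambda_j)^{-2}$ (Weyl asymptotics), which works equally well. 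One bookkeeping caution: in the paper's convention the $\lambda_j$ are eigenvalues of $-\mathcal{L}$ (this is what makes $J=\{j:\lambda_j<-\delta\}$, the ansatz $e^{-\lambda_j\tau}\varphi_j$, and $\lambda_1=-1-\alpha$ coherent), so the scalar equation is $u_j'+\lambda_j u_j=f_j$ with kernel $e^{-\lambda_j(\tau-s)}$; your written ODE $u_j'-\lambda_j u_j=f_j$ and kernel $e^{\lambda_j(\tau-s)}$ are inconsistent with your own convergence criterion $\delta>-\lambda_j$ and with the claim that the homogeneous mode decays faster than $e^{\delta\tau}$ for $j\in J$ --- fixing this single sign convention throughout makes your argument exactly the paper's proof.
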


\begin{proof} Recall that $\{\varphi_i\}$ is an orthonormal basis of $L^2_h(\mathbb{S}^1)$ with respect to $( , )_h$. It suffices to solve
\begin{align*}
\partial_\tau u_i+\lambda_iu_i=f_i,\quad \mathbb{S}^1\times \mathbb{R}_-,
\end{align*}
where $u_i=(u,\varphi_i)_h$ and $f_i=(f,\varphi_i)_h$. 
Denote
\begin{align*}
u_{j}(\tau)&:=\int_{\tau}^{0} e^{\lambda_{j}(s-\tau)} f_{j}(s) d s, \quad j\in J, \\
u_{j}(\tau)&:=\int_{-\infty}^{\tau} e^{\lambda_{j}(s-\tau)} f_{j}(s) d s, \quad j\in J^c=\mathbb{Z}_+\backslash J.
\end{align*}
Notice the integral on the RHS is well-defined because $|f_j(s)|\leq ||f(\cdot,s)||_h\leq ||f||_{L^{2,\delta}}e^{\delta s}$.
  Define $u(\cdot,\tau)=\sum_{j=1}^\infty u_j(\tau)\varphi_j(\cdot)$. It is easy to see $P_{j}(u(\cdot,0))=0$ for any $j\in J$. 
  
  Choose $\delta'$ and $\delta''$ satisfying  $\max_{j\in J}\{\lambda_j\}<-\delta'<-\delta<-\delta''<\min_{j\in J^c}\{\lambda_j\}$. Note that for $j\in J$
 \begin{align*}
 u_j^2(\tau)\leq \int_\tau^0 e^{2(\lambda_j+\delta')(s-\tau)}ds\int_\tau^0 e^{-2\delta'(s-\tau)}|f_j|^2ds\leq C\int_\tau^0 e^{-2\delta'(s-\tau)}|f_j|^2ds
 \end{align*}
 and for $j\in J^c$
  \begin{align*}
 u_j^2(\tau)\leq \int_{-\infty}^\tau e^{2(\lambda_j+\delta'')(s-\tau)}ds\int_{-\infty}^\tau e^{-2\delta''(s-\tau)}|f_j|^2ds\leq C\int_{-\infty}^\tau e^{-2\delta''(s-\tau)}|f_j|^2ds.
 \end{align*}
 Combining the above two inequalities and using $|f_j(s)|\leq ||f||_{L^{2,\delta}}e^{\delta s}$, one obtains
 \begin{align*}
  ||u(\cdot,\tau)||_{h}^2=\sum_{j}u_j^2(\tau)\leq& C\int_\tau^0 e^{-2\delta'(s-\tau)}|f_j|^2ds+C\int_{-\infty}^\tau e^{-2\delta''(s-\tau)}|f_j|^2ds\\
  \leq &C||f||_{L^{2,\delta}}^2e^{2\delta\tau}.
 \end{align*}
Therefore $||u||_{L^{2,\delta}}\leq C||f||_{L^{2,\delta}}$.

Let's establish $\C^{2,\beta,\delta}$ bounds. By the interior parabolic Schauder estimates (for instance, see \cite[(C.6)]{choi2019ancient}), we have that for any $\tau\leq 0$, 
\[||u||_{\C^{2,\beta}(\mathbb{S}^1\times (\tau-1,\tau))}\leq C\left(||u||_{L^2(\mathbb{S}^1\times (\tau-2,\tau))}+||f||_{\C^{0,\beta}(\mathbb{S}^1\times(\tau-2,\tau))}\right).\]
Multiplying by $e^{\delta\tau}$ and taking the supremum over $\tau\leq 0$ yield
\begin{align*}
||u||_{\C^{2,\beta,\delta}}\leq C(||u||_{L^{2,\delta}}+||f||_{\C^{0,\beta,\delta}})\leq C(||f||_{L^{2,\delta}}+||f||_{\C^{0,\beta,\delta}})\leq C||f||_{\C^{0,\beta,\delta}}.
\end{align*}

\end{proof}

We shall use contraction mapping theorem and the above lemma repeatedly to construct ancient solutions. Let us introduce some necessary notations. For any $\boldsymbol{a}=(a_1,\cdots,a_I)\in \mathbb{R}^I$, denote $|\boldsymbol{a}|=(\sum_{i=1}^Ia_i^2)^{\frac12}$.
We introduce auxiliary operators  which maps any integer set $J\subset\{1,2,\cdots,I\}$ to functions,  
\begin{align*}
    \iota^J:\mathbb{R}^{I}&\to L_h^2\times \mathbb{R}_-,\\
    \iota^J(\boldsymbol{a})&:=\sum_{j\in J} a_je^{-\lambda_j\tau}\varphi_j.
\end{align*}

Denote $L=\lfloor\lambda_1/\lambda_I\rfloor$. For each $l=1,\cdots,L$,  define 
\[J^{(l)}=\{m:(l+1)\lambda_I< \lambda_m\leq l\lambda_I\}.\]
 Then $\cup_{l=1}^LJ^{(l)}$ is a partition of $\{1,\cdots,I\}$ according to the negative eigenvalues of $\mathcal{L}_\Gamma$. Choose $\delta_l$ satisfying 
 \[\max\{\lambda_j:j\in J^{(l+1)}\}\leq (l+1)\lambda_I<-\delta_l<\min\{\lambda_j:j\in J^{(l)}\}.\]
Write $X^{(l)}=X^{\delta_l}$, $\iota^{(l)}=\iota^{J^{(l)}}$,  and $P^{(l)}=\sum_{j:\lambda_j<-\delta_l}P_{j}$ for simplicity. In what follows, we will use the symbol $\lesssim$ for inequalities that hold up to multiplicative constants that may depend on $\alpha,h$.

Here is the main result of this section 
\begin{theorem}\label{thm:exist-type1} Let $L=\lfloor\lambda_1/\lambda_I\rfloor$\footnote{$\lfloor x\rfloor$ means the greatest integer less than or equal to $x$}. There exists some $\varepsilon_0>0$ satisfying the following significance. Given $\boldsymbol{a}=(a_1,\cdots,a_{I})\in \mathbb{R}^I$ with $|\boldsymbol{a}|<\varepsilon_0$, there exist  a set of functions $\{v^{(l)}\}_{l=1}^L$  uniquely determined and depending continuously on $\boldsymbol{a}$ such that for each $l=1,\cdots, L$, we have $v^{(l)}-\iota^{(l)}(\boldsymbol{a})\in X^{(l)}$, $P^{(l)}(v^{(l)}-\iota^{(l)}(\boldsymbol{a}))(\cdot,0)=0$, and $\sum_{j=1}^lv^{(j)}$ is an ancient solution of \eqref{intro:eq:v-flow} for $(-\infty,0]$.  More importantly
\begin{align}\label{eq:layer}
\lim_{\tau\to -\infty }e^{\lambda_m\tau}(v^{(l)}(\cdot,\tau),\varphi_m)_h=a_m, \quad m\in J^{(l)}.
\end{align}
\end{theorem}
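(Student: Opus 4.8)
The plan is to construct the functions $v^{(l)}$ inductively on $l=1,\dots,L$, using Lemma \ref{lem:exp-exist} together with the Banach fixed-point theorem on the spaces $X^{(l)}$. For the base case $l=1$, I would seek $v^{(1)}$ in the form $v^{(1)} = \iota^{(1)}(\boldsymbol{a}) + w$ with $w\in X^{(1)}$. Plugging into \eqref{intro:eq:v-flow} and using that $\iota^{(1)}(\boldsymbol{a})$ solves the \emph{linear} equation $\partial_\tau \phi = \mathcal{L}\phi$ exactly (since $\mathcal{L}\varphi_j=\lambda_j\varphi_j$ and $\partial_\tau(e^{-\lambda_j\tau}\varphi_j)=-\lambda_j e^{-\lambda_j\tau}\varphi_j = \mathcal{L}(e^{-\lambda_j\tau}\varphi_j)$), the equation for $w$ becomes
\begin{equation*}
\partial_\tau w - \mathcal{L}w = E_\Gamma\big(\iota^{(1)}(\boldsymbol{a}) + w\big).
\end{equation*}
I would define a map $\mathcal{T}$ on a small ball of $X^{(1)}$ by letting $\mathcal{T}(w)$ be the unique solution provided by Lemma \ref{lem:exp-exist} with right-hand side $f = E_\Gamma(\iota^{(1)}(\boldsymbol{a})+w)$ and with $P^{(1)}(\mathcal{T}(w)(\cdot,0))=0$. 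One must first check that $f$ has finite $\|\cdot\|_{\mathcal{C}^{0,\beta,\delta_1}}$ norm: since $|E_\Gamma(v)|\lesssim |v_{\theta\theta}+v|^2$ and $\iota^{(1)}(\boldsymbol{a})$ decays like $e^{\lambda_m\tau}$ as $\tau\to-\infty$ with $\lambda_m<0$, the quadratic nature gives decay like $e^{2\lambda_I\tau}$, and since $-\delta_1 < 2\lambda_I$ fails in general — here one uses $-\delta_1 < \min\{\lambda_j: j\in J^{(1)}\}$ and that $2\lambda_I \le 2\lambda_1 < -\delta_1$ requires care; the choice $J^{(l)}=\{m:(l+1)\lambda_I<\lambda_m\le l\lambda_I\}$ is exactly engineered so that the quadratic error of an $l$-th layer term lands in the $(l+1)$-th or later layer, hence in $X^{(l)}$. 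Contractivity follows from the quadratic estimate $|E_\Gamma(v_1)-E_\Gamma(v_2)|\lesssim (|v_1|_{\mathcal{C}^2}+|v_2|_{\mathcal{C}^2})|v_1-v_2|_{\mathcal{C}^2}$ and smallness of $\varepsilon_0$, with the operator-norm constant $C$ from Lemma \ref{lem:exp-exist} absorbed. Continuous dependence on $\boldsymbol{a}$ is standard for fixed points depending continuously on a parameter.

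For the inductive step, suppose $v^{(1)},\dots,v^{(l-1)}$ have been constructed so that $V_{l-1}:=\sum_{j=1}^{l-1}v^{(j)}$ solves \eqref{intro:eq:v-flow}. I would look for $v^{(l)} = \iota^{(l)}(\boldsymbol{a}) + w$ with $w\in X^{(l)}$ such that $V_{l-1}+v^{(l)}$ again solves \eqref{intro:eq:v-flow}. Expanding,
\begin{equation*}
\partial_\tau(V_{l-1}+v^{(l)}) = \mathcal{L}(V_{l-1}+v^{(l)}) + E_\Gamma(V_{l-1}+v^{(l)}),
\end{equation*}
and subtracting the equation for $V_{l-1}$ and using $\partial_\tau \iota^{(l)}(\boldsymbol{a}) = \mathcal{L}\iota^{(l)}(\boldsymbol{a})$, the equation for $w$ reads
\begin{equation*}
\partial_\tau w - \mathcal{L}w = E_\Gamma(V_{l-1}+\iota^{(l)}(\boldsymbol{a})+w) - E_\Gamma(V_{l-1}) =: f.
\end{equation*}
The key point is that $V_{l-1}$ has $\|\cdot\|_{\mathcal{C}^{2,\beta,\delta_{l-1}}}$-type bounds, so it decays like $e^{\lambda_{\min J^{(1)}}\tau}$ at worst, more precisely each piece decays appropriately, and the difference $E_\Gamma(V_{l-1}+\iota^{(l)}(\boldsymbol{a})+w)-E_\Gamma(V_{l-1})$ is, by the mean value form of the quadratic estimate, controlled by $(|V_{l-1}|_{\mathcal{C}^2}+|\iota^{(l)}(\boldsymbol{a})+w|_{\mathcal{C}^2})(|\iota^{(l)}(\boldsymbol{a})|_{\mathcal{C}^2}+|w|_{\mathcal{C}^2})$; tracking the exponential rates shows $f\in \mathcal{C}^{0,\beta,\delta_l}$, because a product of a term decaying at rate $\ge \lambda_I$ and a term at rate $\gtrsim l\lambda_I$ decays at rate $\gtrsim (l+1)\lambda_I > -\delta_l$. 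Again Lemma \ref{lem:exp-exist} plus contraction mapping on a small ball of $X^{(l)}$ yields the unique $w$ with $P^{(l)}(w(\cdot,0))=0$.

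Finally, the asymptotics \eqref{eq:layer} follow directly from the decomposition $v^{(l)} = \iota^{(l)}(\boldsymbol{a}) + w$ with $w\in X^{(l)}$: for $m\in J^{(l)}$ we have $(\iota^{(l)}(\boldsymbol{a})(\cdot,\tau),\varphi_m)_h = a_m e^{-\lambda_m\tau}$ since the $\varphi_j$ are $L^2_h$-orthonormal, while $|(w(\cdot,\tau),\varphi_m)_h| \le \|w(\cdot,\tau)\|_h \lesssim e^{\delta_l\tau}$, and $\delta_l > -\lambda_m$ for $m\in J^{(l)}$ because $-\delta_l < \min\{\lambda_j:j\in J^{(l)}\}\le \lambda_m$; hence $e^{\lambda_m\tau}(w,\varphi_m)_h\to 0$. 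Multiplying by $e^{\lambda_m\tau}$ and letting $\tau\to-\infty$ gives the claim. I expect the main obstacle to be the bookkeeping of exponential decay rates in the nested induction — specifically, verifying that at stage $l$ the nonlinear source term $f$ genuinely lies in $X^{(l)}=X^{\delta_l}$ rather than only in a weaker weighted space, which is where the precise definition of the partition $J^{(l)}$ via $L=\lfloor \lambda_1/\lambda_I\rfloor$ and the quadratic structure of $E_\Gamma$ must be used in exactly the right way; a secondary technical point is ensuring the Schauder-type estimate in Lemma \ref{lem:exp-exist} propagates through the iteration with a uniform constant so that the contraction constant stays below $1$ for all $l$ simultaneously.
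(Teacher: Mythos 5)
Your proposal is correct and follows essentially the same route as the paper: a layer-by-layer induction in which $v^{(l)}=\iota^{(l)}(\boldsymbol{a})+w$ with $w\in X^{(l)}$, the linear solvability with $P^{(l)}(w(\cdot,0))=0$ from Lemma \ref{lem:exp-exist}, the quadratic estimates of Proposition \ref{prop:error_expansion} showing the source $E_\Gamma(V_{l-1}+\iota^{(l)}(\boldsymbol{a})+w)-E_\Gamma(V_{l-1})$ decays at rate at least $(l+1)|\lambda_I|>\delta_l$, a contraction mapping for existence, uniqueness and continuous dependence, and the asymptotics \eqref{eq:layer} from $\lambda_m+\delta_l>0$ for $m\in J^{(l)}$. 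The only blemish is the garbled parenthetical in your base case (the relevant inequality is $2\lambda_I<-\delta_1$, i.e. $\delta_1<2|\lambda_I|$, and $2\lambda_1\le 2\lambda_I$, not the reverse), which does not affect the argument you actually run.
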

Let us first prove a proposition which will be needed in the proof of Theorem \ref{thm:exist-type1}.
\begin{proposition}\label{prop:error_expansion}
There exists some constants $C=C(\alpha,h)$ and $\epsilon=\epsilon(\alpha,h)>0$ such that if $|v_{\theta\theta}+v|\leq \epsilon$ then
\begin{align}
| E_{\Gamma}(v)|&\leq C|v_{\theta\theta}+v|^2,\label{eq:Ec0}\\
|E_\Gamma(v)(\tau)|_{\C^\beta}&\leq C|(v_{\theta\theta}+v)(\tau)|_{\C^\beta}|(v_{\theta\theta}+v)(\tau)|_{\C^0}\label{eq:Ecbeta}.
\end{align}
Moreover, if $u,v$ satisfy  $|u_{\theta\theta}+u|+|v_{\theta\theta}+v|\leq \epsilon$ then
\begin{align*}
&|E_\Gamma(u)(\tau)-E_\Gamma(v)(\tau)|_{\C^\beta}\\
\leq& C|((u-v)_{\theta\theta}+u-v)(\tau)|_{\C^\beta}\left[|(u_{\theta\theta}+u)(\tau)|_{\C^0}+|(v_{\theta\theta}+v)(\tau)|_{\C^0}\right]
\end{align*}
\end{proposition}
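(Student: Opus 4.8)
The plan is to derive everything from a Taylor expansion of the nonlinearity $F(w) = -(h_{\theta\theta}+h+w)^{-\alpha} + (h+v)$ around $w = 0$, where $w = v_{\theta\theta}+v$ is the natural "second-order-operator" variable; note $h_{\theta\theta}+h = h^{-1/\alpha} > 0$ is bounded below on $\mathbb{S}^1$, so for $|w|\le\epsilon$ small the base point stays in a compact subinterval of $(0,\infty)$. Writing $E_\Gamma(v) = F(w) - F(0) - F'(0)w$ (the difference after subtracting the constant and linear terms that define $\mathcal{L}_\Gamma$), the integral form of Taylor's theorem gives
\[
E_\Gamma(v)(\theta,\tau) = w(\theta,\tau)^2 \int_0^1 (1-s)\, g\bigl(h^{-1/\alpha}(\theta) + s\,w(\theta,\tau)\bigr)\, ds,
\]
where $g(y) = -\alpha(\alpha+1) y^{-\alpha-2}$ is the relevant second derivative, smooth and bounded (with all derivatives bounded) on the compact $y$-interval in question. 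The pointwise bound \eqref{eq:Ec0} is then immediate with $C = \tfrac12 \sup |g|$ over that interval, using that $h^{-1/\alpha}$ is smooth and bounded on $\mathbb{S}^1$.

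For the $\mathcal{C}^\beta$ estimates I would exploit the product/composition structure of the representation above. Abbreviate $R(\theta,\tau) = \int_0^1 (1-s) g(h^{-1/\alpha}(\theta)+s w(\theta,\tau))\,ds$, so that $E_\Gamma(v) = w^2 R$. Since $g$ and $h^{-1/\alpha}$ are smooth with bounded derivatives on the relevant compact set, $R$ is bounded in $\mathcal{C}^0$ and its parabolic $\mathcal{C}^\beta$ seminorm is controlled by that of $w$: $|R(\tau)|_{\mathcal{C}^\beta} \lesssim |w(\tau)|_{\mathcal{C}^\beta} + (\text{Hölder const.\ of }h^{-1/\alpha})$, using that $h^{-1/\alpha}$ has a fixed (time-independent, bounded) Hölder norm. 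Then the standard product rule for Hölder seminorms, $|fg|_{\mathcal{C}^\beta} \le |f|_{\mathcal{C}^\beta}|g|_{\mathcal{C}^0} + |f|_{\mathcal{C}^0}|g|_{\mathcal{C}^\beta}$, applied to $f = w^2$, $g = R$ — together with $|w^2|_{\mathcal{C}^\beta}\le 2|w|_{\mathcal{C}^0}|w|_{\mathcal{C}^\beta}$ and $|w^2|_{\mathcal{C}^0} = |w|_{\mathcal{C}^0}^2$ — yields $|E_\Gamma(v)(\tau)|_{\mathcal{C}^\beta} \lesssim |w(\tau)|_{\mathcal{C}^\beta}|w(\tau)|_{\mathcal{C}^0}$, which is \eqref{eq:Ecbeta}. (One must use $|w(\tau)|_{\mathcal{C}^0}\le\epsilon$ to absorb the lower-order term $|w|_{\mathcal{C}^0}|R|_{\mathcal{C}^\beta}$ where the constant-from-$h$ part of $|R|_{\mathcal{C}^\beta}$ appears — here we simply bound that contribution by $C\epsilon\,|w|_{\mathcal{C}^0}$... actually more carefully, the $h$-contribution to $|R|_{\mathcal C^\beta}$ multiplied by $|w^2|_{\mathcal C^0}=|w|_{\mathcal C^0}^2$ is already $\lesssim |w|_{\mathcal C^0}|w|_{\mathcal C^0}\le|w|_{\mathcal C^0}|w|_{\mathcal C^\beta}$, so no smallness is even needed there.)

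For the difference estimate I would use the fundamental-theorem-of-calculus trick in the $w$-variable: setting $w_u = u_{\theta\theta}+u$, $w_v = v_{\theta\theta}+v$,
\[
E_\Gamma(u) - E_\Gamma(v) = \int_0^1 \frac{d}{dt}\Bigl[ G\bigl(t w_u + (1-t) w_v\bigr)\Bigr] dt = (w_u - w_v)\int_0^1 G'\bigl(t w_u + (1-t)w_v\bigr) dt,
\]
where $G(w) = F(w) - F(0) - F'(0)w$ so that $G(0)=G'(0)=0$ and hence $G'(w) = w\int_0^1 g(h^{-1/\alpha}+sw)\,ds$ vanishes at $w=0$; consequently the integral factor above is itself $O(|w_u|+|w_v|)$ pointwise. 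Expanding $G'(tw_u+(1-t)w_v) = (tw_u+(1-t)w_v)\int_0^1 g(\cdots)\,ds$ and applying the product rule for $\mathcal{C}^\beta$ seminorms three times — to the product of $(w_u-w_v)$, the convex combination $tw_u+(1-t)w_v$, and the (bounded, $\mathcal{C}^\beta$-controlled) $g$-integral — gives exactly the claimed bound, with the bracket $\bigl[|w_u(\tau)|_{\mathcal{C}^0}+|w_v(\tau)|_{\mathcal{C}^0}\bigr]$ coming from the convex-combination factor in $\mathcal{C}^0$ and the smallness $\epsilon$ used to absorb the lower-order terms as before. The main technical nuisance — not really an obstacle — is bookkeeping the $h$-dependent Hölder constants in the composition estimates and checking that $\epsilon$ can be chosen uniformly (depending only on $\alpha$ and $h$) so that the argument of $g$ stays in a fixed compact subset of $(0,\infty)$; the key quantitative input is just $\min_{\mathbb{S}^1} h^{-1/\alpha} > 0$, which follows from \eqref{eq:h2} and positivity of $h$.
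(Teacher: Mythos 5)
Your proposal follows essentially the same route as the paper: expand the nonlinearity to second order around $w:=v_{\theta\theta}+v=0$ and control the quadratic remainder, then run H\"older product/composition estimates. (The paper factors out $h$ and expands $(1+x)^{-\alpha}$ with $x=h^{1/\alpha}w$; your integral-form Taylor remainder is the same computation.) For the difference estimate your fundamental-theorem identity is in fact cleaner than what the paper writes: the paper's displayed inequality $|(1+x)^{-\alpha}+\alpha x-(1+y)^{-\alpha}-\alpha y|\le C(x-y)^2$ is not correct as stated (the true bound is $C(|x|+|y|)|x-y|$), whereas your factorization $E_\Gamma(u)-E_\Gamma(v)=(w_u-w_v)\int_0^1 G'(tw_u+(1-t)w_v)\,dt$ with $G'(0)=0$ produces exactly the correct factor $|w_u|+|w_v|$ and yields the stated conclusion.

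One step of yours does not hold as written: the inequality $|w|_{\C^0}\le |w|_{\C^\beta}$ is false, because $|\cdot|_{\C^\beta}$ is only the H\"older \emph{seminorm} (constants have zero seminorm but nonzero sup). This is precisely the term coming from the $\theta$-dependence of $h^{-1/\alpha}$ inside your $R$, which contributes $C(h)|w|_{\C^0}^2$ to $|E_\Gamma(v)(\tau)|_{\C^\beta}$, and it cannot be absorbed: indeed, with the pure seminorm on the right, \eqref{eq:Ecbeta} is itself false for non-constant $h$ (take $v\equiv c$ small, so $w\equiv c$, $|w|_{\C^\beta}=0$, while $E_\Gamma(v)=c^2\bigl(-\tfrac{\alpha(\alpha+1)}{2}h^{(\alpha+2)/\alpha}+o(1)\bigr)$ is a non-constant function of $\theta$). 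The paper's own proof has the same blemish, since its pointwise bound $|E_\Gamma(v)(\theta_1,t_1)-E_\Gamma(v)(\theta_2,t_2)|\le C|w(\theta_1,t_1)-w(\theta_2,t_2)|^2$ ignores the variation of $h(\theta_1)$ versus $h(\theta_2)$. What is actually used later, in the proof of Theorem \ref{thm:exist-type1}, is the full-norm version $\|E_\Gamma(v)(\tau)\|_{\C^{0,\beta}}\le C\|(v_{\theta\theta}+v)(\tau)\|_{\C^{0,\beta}}^2$, and with $\|w\|_{\C^{0,\beta}}=|w|_{\C^0}+|w|_{\C^\beta}$ on the right your argument closes verbatim, since $C(h)|w|_{\C^0}^2\le C\|w\|_{\C^{0,\beta}}|w|_{\C^0}$. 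So: same approach as the paper and correctly structured, but delete the inequality $|w|_{\C^0}\le|w|_{\C^\beta}$ and state (and use) the H\"older estimates with the full parabolic norm on the right-hand side.
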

\begin{proof}
By the definition $E_\Gamma$ in \eqref{intro:eq:v-flow}, we obtain
\[E_\Gamma(v)=-h{\left(1+h^{\frac{1}{\alpha}}(v_{\theta\theta}+v)\right)^{-\alpha}}+h-\alpha h^{1+\frac{1}{\alpha}}(v_{\theta\theta}+v).\]
Using the Taylor expansion of $(1+x)^{-\alpha}$, it is easy to know $|(1+x)^{-\alpha}-1+\alpha x|\leq C(\alpha)x^2$ whenever $|x|<1/2$. Therefore we have 
\[|E_\Gamma(v)|\leq C(\alpha,h)|v_{\theta\theta}+v|^2\]
whenever $|v_{\theta\theta}+v|<\frac12 h^{-1/\alpha}$. 

Our second conclusion follows from the following observations
\[|(1+x)^{-\alpha}+\alpha x-(1+y)^{-\alpha}-\alpha y|\leq C(\alpha)(x-y)^2,\quad |x|+|y|<\frac12\]
and consequently for any $t_1,t_2\in [\tau-1,\tau]$
\begin{align*}
&|E_\Gamma(v)(\theta_1,t_1)-E_\Gamma(v)(\theta_2,t_2)|\\
\leq &C(\alpha,h)|(v_{\theta\theta}+v)(\theta_1,t_1)-(v_{\theta\theta}+v)(\theta_2,t_2)|^2\\
\leq & C(\alpha,h) (|\theta_1-\theta_2|^\beta+|t_1-t_2|^{\beta/2})|(v_{\theta\theta}+v)(\tau)|_{\C^\beta}|(v_{\theta\theta}+v)(\tau)|_{\C^0}.
\end{align*}
The estimates of $|E_\Gamma(u)(\tau)-E_\Gamma(v)(\tau)|_{\C^{\beta}}$ can be proved similarly.
\end{proof}

Now we can prove the main theorem of this section.
\begin{proof}[Proof of Theorem \ref{thm:exist-type1}]
We shall find all $v^{(l)}$ by the induction. First, we notice that $\iota^{(1)}$ is an  ancient solution to the linear equation $\partial_\tau v=\mathcal{L}v $. Therefore, to find $v^{(1)}$,  we assume $v^{(1)}=w+\iota^{(1)}(\boldsymbol{a})$ for some $w$ to be determined. Then \eqref{intro:eq:v-flow} is equivalent to 
\begin{align}\label{eq:w-cont}
\partial_\tau w=\mathcal{L}w+E (w+\iota^{{(1)}}(\boldsymbol{a}))
\end{align}
Here and in the following, we shall write $E(v)=E_\Gamma(v)$ and $\r[v]=v_{\theta\theta}+v$ for short.
\begin{claim} There exists small $\varepsilon_0$ such that if $||w||_{ X^{(1)}}+|\boldsymbol{a}|<\varepsilon_0$ then 
\begin{align}
||E (w+\iota^{(1)}(\boldsymbol{a}))||_{\C^{0,\beta,\delta_1}}&\lesssim ||w||_{X^{(1)}}^2+|\boldsymbol{a}|^2,\label{eq:v1-1}\\
||E (w_1+\iota^{(1)}(\boldsymbol{a}))-E (w_2+\iota^{(1)}(\boldsymbol{a}))||_{\C^{0,\beta,\delta_1}}&\lesssim \varepsilon_0||w_1-w_2||_{X^{(1)}}.\label{eq:v1-2}
\end{align}
\end{claim}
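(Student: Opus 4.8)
The plan is to prove the Claim by estimating $E(w+\iota^{(1)}(\boldsymbol{a}))$ through Proposition \ref{prop:error_expansion}, with the caveat that the error bounds there require $|\r[v]|$ to be small, and here $\iota^{(1)}(\boldsymbol{a}) = \sum_{j\in J^{(1)}} a_j e^{-\lambda_j\tau}\varphi_j$ with $\lambda_j<0$ \emph{grows} as $\tau\to-\infty$. So the very first thing I would check is that on each slab $\mathbb{S}^1\times(\tau-1,\tau)$ the quantity $|\r[w+\iota^{(1)}(\boldsymbol{a})]|_{\C^0}$ is genuinely small. For $\iota^{(1)}$ this uses $J^{(1)}=\{m:2\lambda_I<\lambda_m\le \lambda_I\}$ and the choice $-\delta_1<\min\{\lambda_j:j\in J^{(1)}\}$, i.e. $\lambda_j < -\delta_1 < 0$ for $j\in J^{(1)}$, so that $|\iota^{(1)}(\boldsymbol{a})(\tau)|_{\C^{2,\beta}} \lesssim |\boldsymbol{a}|e^{-\delta_1\tau}\le |\boldsymbol{a}|$ for $\tau\le 0$ (here I use $e^{-\delta_1\tau}\le e^{\lambda_j\tau}\cdot$const is \emph{false} — rather $e^{\lambda_j\tau}\le e^{-\delta_1\tau}$ for $\tau\le 0$ since $\lambda_j<-\delta_1<0$ means $\lambda_j\tau \le -\delta_1\tau$; wait, $\tau\le0$ so multiplying the inequality $\lambda_j<-\delta_1$ by $\tau\le 0$ flips it: $\lambda_j\tau \ge -\delta_1\tau$, hence $e^{\lambda_j\tau}\ge e^{-\delta_1\tau}$, so this is the wrong direction). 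The correct point: $\iota^{(1)}(\boldsymbol{a})\in X^{(1)}$ is false in general; what is true is $\iota^{(1)}(\boldsymbol{a})$ has the bound $|\iota^{(1)}(\boldsymbol{a})(\tau)|_{\C^{2,\beta}}\lesssim |\boldsymbol{a}| e^{\lambda\tau}$ where $\lambda=\min\{\lambda_j:j\in J^{(1)}\}<0$, which is $\le |\boldsymbol{a}|$ for $\tau\le 0$. Combined with $|w(\tau)|_{\C^{2,\beta}}\le ||w||_{X^{(1)}} e^{\delta_1\tau}\le ||w||_{X^{(1)}}$, we get $|\r[w+\iota^{(1)}(\boldsymbol{a})](\tau)|_{\C^0}\lesssim |\boldsymbol{a}|+||w||_{X^{(1)}} < C\varepsilon_0$, which is $<\epsilon$ (the threshold of Proposition \ref{prop:error_expansion}) once $\varepsilon_0$ is small.

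Once smallness is secured, I would apply the three estimates of Proposition \ref{prop:error_expansion}. For \eqref{eq:v1-1}: on each slab, \eqref{eq:Ec0} gives $|E(w+\iota^{(1)})|_{\C^0}\lesssim |\r[w+\iota^{(1)}]|_{\C^0}^2 \lesssim (|\boldsymbol{a}|e^{\lambda\tau}+||w||_{X^{(1)}}e^{\delta_1\tau})^2$, and \eqref{eq:Ecbeta} gives $|E(w+\iota^{(1)})(\tau)|_{\C^\beta}\lesssim |\r[w+\iota^{(1)}](\tau)|_{\C^\beta}\,|\r[w+\iota^{(1)}](\tau)|_{\C^0}$, bounded by the same product. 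The key arithmetic is then: multiply by $e^{-\delta_1\tau}$ and take $\sup_{\tau\le 0}$. The cross term $|\boldsymbol{a}|\,||w||\,e^{(\lambda+\delta_1)\tau}$: since $\lambda<-\delta_1$ we have $\lambda+\delta_1<0$, so $e^{(\lambda+\delta_1)\tau}\le 1$ for... no, $\tau\le0$ and $\lambda+\delta_1<0$ gives $(\lambda+\delta_1)\tau\ge0$, so $e^{(\lambda+\delta_1)\tau}\ge1$ — unbounded. So I need the partition condition $2\lambda_I<\lambda_j$, i.e. $\lambda\ge 2\lambda_I$ combined with $-\delta_1 < \lambda$ but also $-\delta_1$ close to $\lambda$ is not enough; rather the point is $\delta_1$ is chosen with $2\lambda_I < -\delta_1$, so $\delta_1 < -2\lambda_I = 2|\lambda_I|$, and $2\lambda\ge 4\lambda_I$... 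The honest statement is: $e^{-\delta_1\tau}(|\boldsymbol{a}|e^{\lambda\tau})^2 = |\boldsymbol{a}|^2 e^{(2\lambda-\delta_1)\tau}$, and $2\lambda - \delta_1 < 2\lambda_I\cdot? $ — one needs $2\lambda - \delta_1 \ge 0$ i.e. $\delta_1\le 2\lambda$... since $\lambda<0$ this means $\delta_1 \le 2\lambda<0$, impossible for $\delta_1>0$. The resolution is that the \emph{square} lands in the \emph{next} slab $J^{(2)}$: $2\lambda \le 2\lambda_I < -\delta_1$ is exactly the partition inequality $\max\{\lambda_j:j\in J^{(2)}\}\le 2\lambda_I < -\delta_1$, so $e^{(2\lambda)\tau}\cdot e^{-\delta_1\tau}$ with $2\lambda < -\delta_1$, $\tau\le0$: $(2\lambda+\delta_1... )$ hmm $2\lambda - (-\delta_1)\cdot$ — I want $\sup_{\tau\le0} e^{-\delta_1\tau}e^{2\lambda\tau}<\infty$, which holds iff $-\delta_1 + 2\lambda \le 0$, i.e. $2\lambda \le \delta_1$; since $\lambda<0<\delta_1$ this is automatic. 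Good — so actually it IS automatic and my worry was spurious. Thus \eqref{eq:v1-1} follows with the RHS $\lesssim |\boldsymbol{a}|^2 + ||w||_{X^{(1)}}^2$ as claimed (the cross term $\le \frac12(|\boldsymbol{a}|^2+||w||^2)$).

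For the contraction estimate \eqref{eq:v1-2}, I would use the third (difference) estimate of Proposition \ref{prop:error_expansion} with $u=w_1+\iota^{(1)}(\boldsymbol{a})$, $v=w_2+\iota^{(1)}(\boldsymbol{a})$, so $u-v=w_1-w_2$ and $\r[u-v]=\r[w_1-w_2]$. This gives $|E(u)(\tau)-E(v)(\tau)|_{\C^\beta}\lesssim |\r[w_1-w_2](\tau)|_{\C^\beta}\big(|\r[u](\tau)|_{\C^0}+|\r[v](\tau)|_{\C^0}\big)$. The second factor is $\lesssim |\boldsymbol{a}|+||w_1||_{X^{(1)}}+||w_2||_{X^{(1)}}\lesssim \varepsilon_0$, while $|\r[w_1-w_2](\tau)|_{\C^\beta}\lesssim ||w_1-w_2||_{X^{(1)}}e^{\delta_1\tau}$; multiplying by $e^{-\delta_1\tau}$ and taking $\sup_{\tau\le0}$ yields \eqref{eq:v1-2}. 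A similar (easier) argument handles $|E(u)-E(v)|_{\C^0}$ via $|(1+x)^{-\alpha}-1+\alpha x - ((1+y)^{-\alpha}-1+\alpha y)|\le C|x-y|(|x|+|y|)$. I would also remark that the identical argument, mutatis mutandis, will be reused at the inductive step for $v^{(l)}$, $l\ge 2$, where the ``frozen'' lower-order part $\sum_{j<l}v^{(j)}$ plays the role of $\iota^{(1)}(\boldsymbol{a})$ and the partition $J^{(l)}$/$J^{(l+1)}$ with its gap condition $(l+1)\lambda_I<-\delta_l$ supplies exactly the exponent bookkeeping needed. The main obstacle is precisely this exponent bookkeeping — making sure each quadratic error term, when measured in the $X^{(l)}$-norm appropriate to its slab, decays at a rate strictly faster than $e^{\delta_l\tau}$, which is where the partition $\cup_l J^{(l)}$ and the strict inequalities defining $\delta_l$ are used; everything else is a routine application of Proposition \ref{prop:error_expansion}.
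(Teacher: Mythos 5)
Your overall strategy is the same as the paper's: verify that $\r[w+\iota^{(1)}(\boldsymbol{a})]$ is small on each slab so that Proposition \ref{prop:error_expansion} applies, then weight the slab-wise quadratic estimate by $e^{-\delta_1\tau}$ and take $\sup_{\tau\le 0}$; your treatment of the contraction estimate \eqref{eq:v1-2} is essentially the paper's and is fine. But the exponent bookkeeping for \eqref{eq:v1-1} --- the step you yourself single out as the main obstacle --- is carried out incorrectly. First, the envelope of $\iota^{(1)}(\boldsymbol{a})$ is misidentified: since $\lambda_j<0$ for $j\in J^{(1)}$, the modes $e^{-\lambda_j\tau}=e^{|\lambda_j|\tau}$ \emph{decay} as $\tau\to-\infty$ (they do not grow, contrary to your opening sentence), and on $\tau\le 0$ the correct bound is $\|\iota^{(1)}(\boldsymbol{a})(\tau)\|_{\C^{2,\beta}}\lesssim |\boldsymbol{a}|\,e^{-\lambda_I\tau}$, governed by the eigenvalue of $J^{(1)}$ closest to zero, not $|\boldsymbol{a}|\,e^{\lambda\tau}$ with $\lambda=\min_{j\in J^{(1)}}\lambda_j$ (also note the paper's choice is $-\delta_1<\lambda_j$ for $j\in J^{(1)}$, not $\lambda_j<-\delta_1$). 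Second, your finiteness criterion is inverted: for the weighted norm one needs $\sup_{\tau\le 0}e^{c\tau}<\infty$, which holds iff $c\ge 0$, not $c\le 0$.

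These two sign errors cancel and lead you to conclude that boundedness of the pure $|\boldsymbol{a}|^2$ term is ``automatic.'' It is not. With the correct envelope, after multiplying by $e^{-\delta_1\tau}$ that term is $|\boldsymbol{a}|^2 e^{(-2\lambda_I-\delta_1)\tau}$, and its supremum over $\tau\le 0$ is finite precisely because $-2\lambda_I-\delta_1\ge 0$, i.e.\ because $\delta_1$ was chosen so that $2\lambda_I<-\delta_1$; if $\delta_1$ exceeded $-2\lambda_I$, estimate \eqref{eq:v1-1} would be false. This is exactly the point the paper invokes (``noticing $-\delta_1>2\lambda_I$''), and it is the only place in the claim where the partition $J^{(l)}$ and the definition of $\delta_1$ enter; your argument, as written, never verifies it. The inequalities you finally assert are the correct ones (the smallness check, the cross term $|\boldsymbol{a}|\,\|w\|_{X^{(1)}}e^{-\lambda_I\tau}\le |\boldsymbol{a}|\,\|w\|_{X^{(1)}}$, and \eqref{eq:v1-2} all go through), but the justification of the key exponent inequality is wrong, so the proof has a genuine gap that must be repaired by redoing the computation with the envelope $|\boldsymbol{a}|e^{-\lambda_I\tau}$ and the condition $-\delta_1>2\lambda_I$.
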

In fact, one can easily derive from \eqref{eq:Ec0} and \eqref{eq:Ecbeta} that there exists $\varepsilon_0>0$ and some constant $C(\alpha,\beta,h)$ such that
   \begin{align*}
    ||E (v)(\tau)||_{\C^{0,\beta}}\leq C||\r[v](\tau)||_{\C^{0,\beta}}^2
    \end{align*}
    provided $||\r[v](\tau)||_{\C^0}<\varepsilon_0$. Furthermore
    \begin{align}
    ||E (v_1)(\tau)-&E (v_2)(\tau)||_{\C^{0,\beta}}\label{eq:R-2}\\
    \leq &C(||\r[v_1]\left(\tau)||_{\C^{0,\beta}}+||\r[v_2](\tau)||_{\C^{0,\beta}}\right)||\r[v_1-v_2](\tau)||_{\C^{0,\beta}}\notag
   \end{align}
   provided $||\r[v_1](\tau)||_{\C^{0}}+||\r[v_2](\tau)||_{\C^{0}}<\varepsilon_0$. 
Therefore 
\begin{align*}
||E (w+\iota^{(1)}(\boldsymbol{a}))(\tau)||_{\C^{0,\beta}}\lesssim& ||\r[w](\tau)+\r[\iota^{(1)}(\boldsymbol{a})](\tau)||^2_{\C^{0,\beta}}\\
 \lesssim& ||w(\tau)||^2_{\C^{2,\beta}}+|\boldsymbol{a}|^2e^{-2\lambda_I\tau}.
\end{align*}
Recall our definition of the norm \eqref{def:delta-norm}, multiplying the above inequality by $e^{\delta_1\tau}$ and noticing $-\delta_1>2\lambda_I$, one can get \eqref{eq:v1-1} holds. Moreover
\begin{align*}
&||E (w_1+\iota^{(1)}(\boldsymbol{a}))-E (w_2+\iota^{(1)}(\boldsymbol{a}))||_{\C^{0,\beta}(\mathbb{S}^1\times(\tau-1,\tau))}\\
\lesssim& (||\r[w_1](\tau)||_{\C^{0,\beta}}+||\r[w_2](\tau)||_{\C^{0,\beta}}+|\boldsymbol{a}|e^{-\lambda_I\tau})||(\r[w_1]-\r[w_2])(\tau)||_{\C^{0,\beta}}
\end{align*}
which implies \eqref{eq:v1-2} holds. Thus, the claim is proved.

Define a map $S:\{f\in X^{(1)}:||f||_{X^{(1)}}<\varepsilon_0\}\to X^{(1)}$ by $S(w)=u$ where $u$ is the solution of 
\[\partial_\tau u-\mathcal{L}u=E (w+\iota^{(1)}(\boldsymbol{a}))\quad \text{on }\quad \mathbb{S}^1\times\mathbb{R}_-\] 
with $P^{(1)}(u(\cdot,0))=0$. By Lemma \ref{lem:exp-exist}, such $u\in X^{(1)}$  is unique, so $S(w)$ is well-defined. Moreover, Lemma \ref{lem:exp-exist} says , 
\[\quad ||u||_{X^{(1)}}\lesssim ||w||_{X^{(1)}}^2+|\boldsymbol{a}|^2\leq \varepsilon_0^2+|\boldsymbol{a}|^2\]
and
\[||S(w_1)-S(w_2)||_{X^{(1)}}\lesssim (\varepsilon_0+|\boldsymbol{a}|)||w_1-w_2||_{X^{(1)}}. \]
 Choosing $\varepsilon_0$ small enough, $S$ will be a contraction mapping on $\{f\in X^{(1)}:||f||_{X^{(1)}}<\varepsilon_0\}$. Therefore it has a fixed point $w$ which solves \eqref{eq:w-cont}. Since $w\lesssim e^{\delta_1 \tau}$, we have
\[\lim_{\tau\to -\infty}e^{\lambda_m\tau}(v^{(1)},\varphi_m)_h=\lim_{\tau\to -\infty}e^{\lambda_m\tau}(\iota^{(1)}(\boldsymbol{a}),\varphi_m)_h=a_m,\quad m\in J^{(1)}.\]

 Therefore, we have found $v^{(1)}$ and \eqref{eq:layer} is true for $l=1$.

Suppose that we have found $v^{1}$ up to $v^{(l)}$, and \eqref{eq:layer} is established up to $l$ by the induction. If $J^{(l+1)}= \emptyset$, let $v^{(l+1)}=0$. Obviously the theorem still holds for such $v^{(l+1)}$.
If $J^{(l+1)}\neq \emptyset$, then we can find $v^{(l+1)}$ by the following process. Let $v^{(l+1)}=w+\iota^{(l+1)}(\boldsymbol{a})$. Since we require $\sum_{j=1}^{l+1}v^{(j)}$ is an ancient solution of \eqref{intro:eq:v-flow}, it suffices to find $w\in X^{(l+1)}$ such that 
\begin{align}\label{eq:l+1}
\partial_\tau w=\mathcal{L}w+E ^{(l+1)}(w)
\end{align}
where 
\begin{align}\label{eq:l+1-2}
E ^{(l+1)}(w)=E \left(w+\iota^{(l+1)}(\boldsymbol{a})+\sum_{j=1}^{l}v^{(j)}\right)-E \left(\sum_{j=1}^{l}v^{(j)}\right).
\end{align}
\begin{claim}There exists $\varepsilon_0$ small such that if $||w||_{ X^{(l+1)}}+|\boldsymbol{a}|<\varepsilon_0$ then
\begin{align}
||E ^{(l+1)}(w)||_{\C^{0,\beta,\delta_{l+1}}}&\lesssim ||w||^2_{X^{(l+1)}}+|\boldsymbol{a}|^2,\label{eq:vl-11}\\
||E ^{(l+1)}(w_1)-E ^{(l+1)}(w_2)||_{\C^{0,\beta,\delta_{l+1}}}&\lesssim \varepsilon_0||w_1-w_2||_{X^{(l+1)}}.\label{eq:vl-21}
\end{align} 
\end{claim}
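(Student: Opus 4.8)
The plan is to run the same contraction argument as for $v^{(1)}$; the only new feature is that $E^{(l+1)}(w)=E\bigl(w+\iota^{(l+1)}(\boldsymbol a)+V_l\bigr)-E(V_l)$ with $V_l:=\sum_{j=1}^{l}v^{(j)}$, so I would invoke the \emph{difference} estimate of Proposition \ref{prop:error_expansion} (and its evident sup‑norm counterpart $\|E(u)(\tau)-E(v)(\tau)\|_{\C^0}\lesssim\|\r[u-v](\tau)\|_{\C^0}\bigl(\|\r[u](\tau)\|_{\C^0}+\|\r[v](\tau)\|_{\C^0}\bigr)$, which follows from the same Taylor expansion as \eqref{eq:Ec0}) rather than \eqref{eq:Ec0}–\eqref{eq:Ecbeta}. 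I would first record a decay bound on $V_l$, carrying along the slightly stronger inductive hypothesis $\|v^{(j)}-\iota^{(j)}(\boldsymbol a)\|_{X^{(j)}}\lesssim|\boldsymbol a|^2$ for $j\le l$, which costs nothing: the level‑$j$ instance of \eqref{eq:vl-11} applied to the fixed point $w^{(j)}=S(w^{(j)})$ gives $\|w^{(j)}\|_{X^{(j)}}\lesssim\|w^{(j)}\|_{X^{(j)}}^2+|\boldsymbol a|^2\le\varepsilon_0\|w^{(j)}\|_{X^{(j)}}+|\boldsymbol a|^2$, hence $\|w^{(j)}\|_{X^{(j)}}\lesssim|\boldsymbol a|^2$ once $\varepsilon_0$ is small. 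Combining this with $\|\iota^{(j)}(\boldsymbol a)(\tau)\|_{\C^{2,\beta}}\lesssim|\boldsymbol a|e^{-\lambda_I\tau}$ — using that $\varphi_m''+\varphi_m$ is a smooth multiple of $\varphi_m$ by \eqref{eq:L-eig-h}, that $\lambda_m\le\lambda_I<0$ for $m\in J^{(j)}$, and that $e^{-\lambda_m\tau}$ is essentially constant over each slab $\mathbb{S}^1\times(\tau-1,\tau)$ — and with $e^{\delta_j\tau}\le e^{-\lambda_I\tau}$ on $\tau\le0$ (since $\delta_j>-\lambda_I$), I obtain $\|V_l(\tau)\|_{\C^{2,\beta}}\lesssim|\boldsymbol a|e^{-\lambda_I\tau}$; in particular $\|\r[V_l](\tau)\|_{\C^0}\lesssim|\boldsymbol a|$, and $\le\epsilon$ for $\varepsilon_0$ small, so the difference estimate is applicable below.

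For \eqref{eq:vl-11}, write $p:=w+\iota^{(l+1)}(\boldsymbol a)$ and $\bar\lambda:=\max\{\lambda_m:m\in J^{(l+1)}\}\le(l+1)\lambda_I$. Applying the difference estimate (and its sup‑norm counterpart) to the pair $p+V_l,\,V_l$ gives
\[
\|E^{(l+1)}(w)(\tau)\|_{\C^{0,\beta}}\ \lesssim\ \|\r[p](\tau)\|_{\C^{0,\beta}}\bigl(\|\r[p+V_l](\tau)\|_{\C^0}+\|\r[V_l](\tau)\|_{\C^0}\bigr).
\]
Now $\|\r[p](\tau)\|_{\C^{0,\beta}}\lesssim\|w\|_{X^{(l+1)}}e^{\delta_{l+1}\tau}+|\boldsymbol a|e^{-\bar\lambda\tau}$, and by Step 1 together with $\delta_{l+1},-\bar\lambda>-\lambda_I$ the bracket is $\lesssim(\|w\|_{X^{(l+1)}}+|\boldsymbol a|)e^{-\lambda_I\tau}$. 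Multiplying out and applying AM–GM to the cross term $\|w\|_{X^{(l+1)}}|\boldsymbol a|$, every surviving weight is $e^{A\tau}$ with $A\in\{2\delta_{l+1},\ \delta_{l+1}-\lambda_I,\ -\bar\lambda-\lambda_I\}$; the relations $\delta_{l+1}>0$, $-\lambda_I>0$, $-\bar\lambda\ge(l+1)(-\lambda_I)$ and $\delta_{l+1}<(l+2)(-\lambda_I)$ force $A\ge\delta_{l+1}$, so $e^{A\tau}\le e^{\delta_{l+1}\tau}$ on $\tau\le0$. Multiplying by $e^{-\delta_{l+1}\tau}$ and taking $\sup_{\tau\le0}$ then yields \eqref{eq:vl-11}.

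For \eqref{eq:vl-21} I would apply the same difference estimate to the pair $w_i+\iota^{(l+1)}(\boldsymbol a)+V_l$, $i=1,2$, whose arguments differ by $w_1-w_2$ and each of which has $\|\r[\,\cdot\,](\tau)\|_{\C^0}\lesssim\varepsilon_0$ uniformly on $\tau\le0$; this gives $\|E^{(l+1)}(w_1)(\tau)-E^{(l+1)}(w_2)(\tau)\|_{\C^{0,\beta}}\lesssim\varepsilon_0\,\|\r[w_1-w_2](\tau)\|_{\C^{0,\beta}}\lesssim\varepsilon_0\,\|w_1-w_2\|_{X^{(l+1)}}e^{\delta_{l+1}\tau}$, which is \eqref{eq:vl-21} after weighting. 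With the Claim in hand, Lemma \ref{lem:exp-exist} makes $w\mapsto S(w)$ — the solution of $\partial_\tau u-\mathcal{L}u=E^{(l+1)}(w)$ on $\mathbb{S}^1\times\mathbb{R}_-$ with $P^{(l+1)}(u(\cdot,0))=0$ — a contraction on $\{\|w\|_{X^{(l+1)}}<\varepsilon_0\}$ for $\varepsilon_0$ small; its fixed point defines $v^{(l+1)}=w+\iota^{(l+1)}(\boldsymbol a)$, the bootstrap of Step 1 upgrades $\|w\|_{X^{(l+1)}}\lesssim|\boldsymbol a|^2$ to continue the induction, and \eqref{eq:layer} at level $l+1$ follows because $\|w(\tau)\|_{\C^0}\lesssim e^{\delta_{l+1}\tau}$ decays strictly faster than $e^{-\lambda_m\tau}$ for $m\in J^{(l+1)}$, while $\lim_{\tau\to-\infty}e^{\lambda_m\tau}(\iota^{(l+1)}(\boldsymbol a),\varphi_m)_h=a_m$.

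The main obstacle is the proof of \eqref{eq:vl-11}: one must see that the contribution to $E^{(l+1)}(w)$ which is merely \emph{linear} in $w$ — namely the product of the $\|w\|_{X^{(l+1)}}e^{\delta_{l+1}\tau}$ part of $\|\r[p]\|_{\C^{0,\beta}}$ with the $\|\r[V_l]\|_{\C^0}$ factor — is nonetheless absorbed into the \emph{quadratic} bound $\|w\|_{X^{(l+1)}}^2+|\boldsymbol a|^2$. This works for two reasons that have to be used at exactly the right place: (i) the sharp inductive bound $\|\r[V_l](\tau)\|_{\C^0}\lesssim|\boldsymbol a|\,e^{-\lambda_I\tau}$, with prefactor $|\boldsymbol a|$ rather than $\varepsilon_0$, which is precisely what the bootstrap of Step 1 supplies; and (ii) the spectral‑gap choices $\max\{\lambda_m:m\in J^{(l+1)}\}\le(l+1)\lambda_I$ and $-\delta_{l+1}>(l+2)\lambda_I$, which guarantee that, after AM–GM, the weight $e^{(\delta_{l+1}-\lambda_I)\tau}$ (and every other weight that appears) is dominated by $e^{\delta_{l+1}\tau}$ on $(-\infty,0]$. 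Had I only known $\|\r[V_l]\|_{\C^0}\lesssim\varepsilon_0$, this step would deliver just $\|E^{(l+1)}(w)\|_{\C^{0,\beta,\delta_{l+1}}}\lesssim\varepsilon_0\|w\|_{X^{(l+1)}}+|\boldsymbol a|^2$, which still suffices for the contraction but is weaker than the stated quadratic bound.
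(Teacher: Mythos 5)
Your proof is correct and follows essentially the same route as the paper: apply the Lipschitz-type estimate \eqref{eq:R-2} to the pair $w+\iota^{(l+1)}(\boldsymbol{a})+\sum_{j\le l}v^{(j)}$ and $\sum_{j\le l}v^{(j)}$, bound $\iota^{(l+1)}(\boldsymbol{a})$ by $|\boldsymbol{a}|e^{-(l+1)\lambda_I\tau}$ and the previously constructed pieces by $|\boldsymbol{a}|e^{-\lambda_I\tau}$, and conclude from $(l+2)\lambda_I<-\delta_{l+1}<(l+1)\lambda_I$ together with AM--GM on the cross term. Your explicit bootstrap $\|v^{(j)}-\iota^{(j)}(\boldsymbol{a})\|_{X^{(j)}}\lesssim|\boldsymbol{a}|^2$ is precisely the ingredient the paper uses implicitly when it writes the factor $|\boldsymbol{a}|e^{-\lambda_I\tau}$, so it is a welcome clarification rather than a genuinely different argument.
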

In fact, using \eqref{eq:R-2}, for $\r[w_1]$ and $\r[w_2]$ small
\begin{align*}
||[E (w_1)&-E (w_2)](\tau)||_{\C^{0,\beta}}\\
\lesssim& (||\r[w_1](\tau)||_{\C^{0,\beta}}+||\r[w_2](\tau)||_{\C^{0,\beta}})||[\r[w_1]-\r[w_2]](\tau)||_{\C^{0,\beta}}.
\end{align*}
This implies
\begin{align*}
||E ^{(l+1)}(w)(\tau)||_{\C^{0,\beta}}\lesssim& (||w(\tau)||_{\C^{2,\beta}}+|\boldsymbol{a}|e^{-\lambda_I\tau})(||w(\tau)||_{\C^{2,\beta}}+|\boldsymbol{a}|e^{-(l+1)\lambda_I\tau})\\
\lesssim&e^{-(l+2)\lambda_I\tau}(||w||_{\C^{2,\beta,-\lambda_I}}+|\boldsymbol{a}|)(||w||_{\C^{2,\beta,-(l+1)\lambda_I}}+|\boldsymbol{a}|).
\end{align*}
Recalling \eqref{def:delta-norm} and $(l+2)\lambda_I<-\delta_{l+1}<(l+1)\lambda_I$, one can see that \eqref{eq:vl-11} holds. The proof of \eqref{eq:vl-21} can be derived similarly.

Define a map $S:\{f\in X^{(l+1)}:||f||_{X^{(l+1)}}<\varepsilon_0\}\to X^{(l+1)}$ by $S(w)=u$ where $u$ is the unique solution of 
\[\partial_\tau u-\mathcal{L}u=E ^{(l+1)}(w)\quad \text{on }\quad\mathbb{S}^1\times\mathbb{R}_-\] 
with $P^{(l+1)}(u(\cdot,0))=0$. Taking $\varepsilon_0$ small enough, $S$ is a contraction mapping. Note that \eqref{eq:layer} is satisfied for $m\in J^{(l+1)}$. The existence of $v^{(l+1)}$ is established. 

Finally, the uniqueness and continuity of $v^{(l)}$ also follow from the contraction mapping theorem.
\end{proof}

According to Theorem \ref{thm:exist-type1}, we can have $I$-parameter family of ancient solutions, but only $I-3$ of them are important, because the first three of them depends on the time and space center we choose in the renormalization \eqref{eq:normalized_flow}. That is, one can always just shift the non-rescaled ancient solution by time and space to edit the first three parameters.

\begin{proposition}\label{prop:rescaling_center}
Let $\Gamma_t$ be an $\alpha$-CSF asymptotic to $\Gamma$ after rescaling where $\Gamma=\bar\Gamma_{\alpha}^k$ or $\bar\Gamma_{\alpha}^c$, and let $\bar u$ denote the support function of the rescaled flow $\bar \Gamma_\tau$. Then, given $B=(b_1,b_2,b_3) \in\mathbb{R}^3$ the ancient flow 
\begin{equation}
\Gamma^{B}_t= \Gamma_{t+ b_1}+(b_2,b_3)\subset \mathbb{R}^2
\end{equation}
satisfies 
\begin{align*}
&\bar{u}^{B}(\theta,\tau)-\bar u(\theta,\tau)\\
&=(1+\alpha)^{-\frac{1}{\alpha+1}}[b_2\cos\theta+b_3\sin\theta]e^\tau+\frac{b_1}{1+\alpha}e^{(1+\alpha)\tau}h+o(e^{(1+\alpha)\tau})
\end{align*}
where $\bar u^{B}$ denotes the support function of the rescaled flow $\bar \Gamma_\tau^B$. Consequently if $\bar\Gamma_\tau$ is constructed from $\boldsymbol{a}$, then $\bar \Gamma_\tau^B$ is constructed from using 
\[\boldsymbol{a}+\left(\frac{b_1}{1+\alpha},(1+\alpha)^{-\frac{1}{\alpha+1}}b_2,(1+\alpha)^{-\frac{1}{\alpha+1}}b_3,0,\cdots,0\right).\]
\end{proposition}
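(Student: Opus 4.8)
The plan is to compute directly how the renormalization map \eqref{eq:normalized_flow} transforms under the time-shift and translation $\Gamma_t \mapsto \Gamma_{t+b_1}+(b_2,b_3)$, and then read off the leading asymptotics of the resulting perturbation of the support function. First I would recall that if $u(\cdot,t)$ is the support function of $\Gamma_t$ with respect to the origin, then the support function of $\Gamma_t + (b_2,b_3)$ with respect to the origin is $u(\theta,t) + b_2\cos\theta + b_3\sin\theta$, since translation adds the linear support function $\langle(\cos\theta,\sin\theta),(b_2,b_3)\rangle$. Combining this with a shift in time, and plugging into \eqref{eq:normalized_flow}, gives
\[
\bar u^{B}(\theta,\tau) = (1+\alpha)^{-\frac{1}{\alpha+1}} e^{\tau}\Big[ u\big(\theta,\,-e^{-(1+\alpha)\tau}+b_1\big) + b_2\cos\theta + b_3\sin\theta\Big].
\]
The term coming from $b_2,b_3$ is exactly $(1+\alpha)^{-\frac{1}{\alpha+1}}(b_2\cos\theta+b_3\sin\theta)e^{\tau}$, which matches the claimed first-order term on the nose.

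The remaining work is to expand $u\big(\theta,-e^{-(1+\alpha)\tau}+b_1\big)$ around $t=-e^{-(1+\alpha)\tau}$. Writing $s(\tau) = -e^{-(1+\alpha)\tau}$, a Taylor expansion in the time variable gives $u(\theta,s+b_1) = u(\theta,s) + b_1\, u_t(\theta,s) + O(b_1^2 u_{tt})$. The key point is to express $u_t$ in self-similar variables: since $\Gamma_t$ is asymptotic to the shrinker, $u(\theta,t) \approx (-t)^{\frac{1}{\alpha+1}}(1+\alpha)^{\frac{1}{\alpha+1}} h(\theta)$ to leading order (this is precisely the content of $\bar u \to h$ under \eqref{eq:normalized_flow}), hence $u_t(\theta,s) \approx -\frac{1}{\alpha+1}(-s)^{-\frac{\alpha}{\alpha+1}}(1+\alpha)^{\frac{1}{\alpha+1}} h(\theta)$. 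Multiplying by the prefactor $(1+\alpha)^{-\frac{1}{\alpha+1}}e^{\tau}\cdot b_1$ and using $(-s)^{-\frac{\alpha}{\alpha+1}} = e^{-\alpha\tau}$, one finds the $b_1$-contribution to $\bar u^B - \bar u$ is $\frac{b_1}{1+\alpha} h(\theta) e^{(1+\alpha)\tau} + (\text{lower order})$, matching the second claimed term. The $o(e^{(1+\alpha)\tau})$ error absorbs both the $O(b_1^2)$ Taylor remainder (which is $O(e^{2(1+\alpha)\tau})$ after rescaling) and the error in replacing $\bar u$ by $h$ in the derivative, which decays strictly faster than $e^{(1+\alpha)\tau}$ because $-\lambda_1 = 1+\alpha$ is the slowest rate and the correction to $h$ involves strictly higher modes — or, more cleanly, one notes $v = \bar u - h = \mathcal{S}(\boldsymbol a)$ satisfies $v = O(e^{(1+\alpha)\tau})$ with the $e^{(1+\alpha)\tau}$-coefficient being $a_1 h$, and the time-shift acts on this known expansion.

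The main obstacle I anticipate is making the $o(e^{(1+\alpha)\tau})$ error rigorous, i.e., justifying that differentiating the asymptotic expansion $\bar u = h + a_1 e^{(1+\alpha)\tau} h + \cdots$ in $\tau$ and re-expanding after the time-shift does not produce spurious terms at the critical order $e^{(1+\alpha)\tau}$. The cleanest route is to work with the rescaled flow throughout: the time-shift $t\mapsto t+b_1$ corresponds, via \eqref{eq:normalized_flow}, to the new rescaling time $\bar\tau$ with $e^{-(1+\alpha)\bar\tau} = e^{-(1+\alpha)\tau} - b_1$, i.e. $\bar\tau = \tau + \frac{1}{(1+\alpha)}\log(1 - b_1 e^{(1+\alpha)\tau})^{-1} = \tau + \frac{b_1}{1+\alpha}e^{(1+\alpha)\tau} + O(e^{2(1+\alpha)\tau})$, together with a $\tau$-dependent dilation factor $(1 - b_1 e^{(1+\alpha)\tau})^{-\frac{1}{\alpha+1}} = 1 + \frac{b_1}{(1+\alpha)}e^{(1+\alpha)\tau} + O(e^{2(1+\alpha)\tau})$ applied to $\bar{\boldsymbol X}$. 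Feeding this into $\bar u^B(\theta,\tau) = (1-b_1e^{(1+\alpha)\tau})^{-\frac{1}{\alpha+1}}\bar u(\theta,\bar\tau) + (1+\alpha)^{-\frac{1}{\alpha+1}}(b_2\cos\theta+b_3\sin\theta)e^\tau$ and using that $\bar u = h + O(e^{(1+\alpha)\tau})$ with $\bar u_\tau = O(e^{(1+\alpha)\tau})$ (from \eqref{eq:hat-u-tau} and the decay of $v$), every contribution beyond the stated two terms is genuinely $O(e^{2(1+\alpha)\tau}) = o(e^{(1+\alpha)\tau})$. The final sentence about which $\boldsymbol a$ constructs $\bar\Gamma^B_\tau$ then follows immediately from the layer expansions \eqref{eq:layer} together with the uniqueness of the ancient solution with prescribed leading coefficients in Theorem \ref{thm:exist-type1}: the computed perturbation changes exactly the $\varphi_1 = h$, $\varphi_2 = \cos\theta$, $\varphi_3 = \sin\theta$ coefficients by the stated amounts and leaves all higher ones unchanged at leading order, so the two ancient flows have parameters differing precisely by $\big(\frac{b_1}{1+\alpha},(1+\alpha)^{-\frac{1}{\alpha+1}}b_2,(1+\alpha)^{-\frac{1}{\alpha+1}}b_3,0,\dots,0\big)$.
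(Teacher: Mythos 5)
Your argument is essentially the paper's proof: translation contributes $b_2\cos\theta+b_3\sin\theta$ to the support function, the time shift is absorbed into a shifted rescaled time $\bar\tau$ (the paper's $\tau_1$ with $e^{-(1+\alpha)\tau_1}=e^{-(1+\alpha)\tau}-b_1$) together with a dilation prefactor, and the parameter identification follows from the layer expansion and the uniqueness in Theorem \ref{thm:exist-type1}. One caveat: the exact prefactor is $e^{\tau-\tau_1}=(1-b_1e^{(1+\alpha)\tau})^{+\frac{1}{\alpha+1}}$, not its reciprocal (and in your heuristic Taylor-expansion step $(-s)^{-\alpha/(\alpha+1)}=e^{\alpha\tau}$, not $e^{-\alpha\tau}$), so a literal expansion produces the coefficient $-\frac{b_1}{1+\alpha}$ rather than $+\frac{b_1}{1+\alpha}$; this is the same sign subtlety that occurs in the paper's own expansion of $e^{\tau-\tau_1}$, so your proposal is correct to the same extent as, and by the same method as, the paper's proof.
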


\begin{proof}Our assumption implies
\[u^{B}(\theta,t)=u(\theta,t+b_1)+b_2\cos\theta+b_3\sin\theta\]
where $u^{B}$ and $u$ are the support functions of $\Gamma_t^{B}$ and $\Gamma_t$. It follows from \eqref{eq:normalized_flow} that
\begin{align}
\bar{u}^{B}(\theta,\tau)=&(1+\alpha)^{-\frac{1}{\alpha+1}}e^\tau u^{B}(\theta,-e^{-(1+\alpha)\tau})\notag\\
=&(1+\alpha)^{-\frac{1}{\alpha+1}}e^{\tau} \left[u(\theta,-e^{-(1+\alpha)\tau}+b_1)+b_2\cos\theta+b_3\sin\theta\right]\notag\\
=&e^{\tau-\tau_1}\bar u(\theta,\tau_1)+(1+\alpha)^{-\frac{1}{\alpha+1}}[b_2\cos\theta+b_3\sin\theta]e^\tau\label{eq:tau1}
\end{align}
where 
\[\tau_1=\frac{-1}{1+\alpha}\log(e^{-(1+\alpha)\tau}-b_1).\]
As $\tau\to -\infty$, we have $\tau_1=\tau+(1+\alpha)^{-1}b_1e^{(1+\alpha)\tau}+o(e^{(1+\alpha)\tau})$. Consequently $e^{\tau-\tau_1}=1+(1+\alpha)^{-1}b_1e^{(1+\alpha)\tau}+o(e^{(1+\alpha)\tau})$.

Since $\bar\Gamma_\tau$ converges to some self-shrinker $\Gamma$ with support function $h$,  $\bar u(\theta,\tau)\to h$ as $\tau\to -\infty$. Plugging the above information of $\tau_1$ to \eqref{eq:tau1}, one gets the conclusion.
\end{proof}

\begin{proof}[Proof of Theorem \ref{intro:thm:exist1}]
It follows from Theorem \ref{thm:exist-type1} that there exists a map 
\begin{align*}
\mathcal{S}:B_{\varepsilon_0}(\subset \mathbb{R}^I)&\to C^{2,\beta}(\mathbb{S}^1\times(-\infty,0])\\
\boldsymbol{a}=(a_1,\cdots,a_I)&\mapsto\sum_{j=1}^{L}v^{(j)}
\end{align*}
such that $\mathcal{S}(\boldsymbol{a})=\sum_{j=1}^{L}v^{(j)}$ is an ancient solution of \eqref{intro:eq:v-flow}. 

Suppose that $\boldsymbol{a},\boldsymbol{b}\in \mathbb{R}^{I}$ satisfy $a_{k}-b_{k} \neq 0$ and  $a_i-b_i = 0$ for all $i>k$. Assume $\lambda_k\in J^{(l+1)}$ for some $l$. Then for any $j\in\{1,\cdots,l\}$, careful tracking the proof of Theorem \ref{thm:exist-type1} shows that $v^{(j)}_{\boldsymbol{a}}=v^{(j)}_{\boldsymbol{b}}$, because of the uniqueness of them obtained through the contraction mapping theorem. In the step to find $v^{(l+1)}_{\boldsymbol{a}}=\iota^{(l+1)}(\boldsymbol{a})+w_{\boldsymbol{a}}$ and $v^{(l+1)}_{\boldsymbol{a}}=\iota^{(l+1)}(\boldsymbol{b})+w_{\boldsymbol{b}}$,  we recall that
$w_{\boldsymbol{a}}, w_{\boldsymbol{b}}\in X^{(l+1)} $. Therefore 

\begin{align*}
 \mathcal{S}(\boldsymbol{a})(\theta,\tau)-\mathcal{S}(\boldsymbol{b})(\theta,\tau)&=v^{(l+1)}_{\boldsymbol{a}}-v^{(l+1)}_{\boldsymbol{b}}+O(e^{-\delta_{l+1}\tau})\\
&= (a_{k}-b_{k}) e^{-\lambda_{k}\tau}\varphi_k(\theta) +O(e^{-\lambda_{k-1}\tau})+O(e^{-\delta_{l+1}\tau})\\
&= (a_{k}-b_{k}) e^{-\lambda_{k}\tau}\varphi_k(\theta) +o(e^{-\lambda_{k}\tau})\end{align*}
when $\lambda_{k-1}<\lambda_k$, and
\begin{align*}
 \mathcal{S}(\boldsymbol{a})(\theta,\tau)-\mathcal{S}(\boldsymbol{b})(\theta,\tau)=e^{-\lambda_{k}\tau}\sum_{i=k-1}^{k}(a_{i}-b_{i}) \varphi_i(\theta) +o(e^{-\lambda_{k}\tau})
\end{align*}
when $\lambda_{k-1}=\lambda_k$, where $\varphi_i$ are eigenfunctions of $\mathcal{L}_\Gamma$ with the eigenvalue $\lambda_i$ and $\langle \varphi_i,\varphi_j\rangle_{L^2_h}=\delta_{ij}$.

Proposition \ref{prop:rescaling_center} says it suffices to have the map for $\boldsymbol{a}=(0,0,0,a_3,\cdots,a_I)$, because the other ancient solutions can be generated by these ones from a different choice of time and space center in the renormalization. Removing the first three zeros of $\boldsymbol{a}$ and reindexing each components, we abuse the notation by still denoting $\boldsymbol{a}=(a_1,\cdots,a_{I-3})$. Therefore we have a map $\mathcal{S}:B_{\epsilon_0}(0)\subset \mathbb{R}^{I-3}\to C^{2,\beta}(\mathbb{S}^1\times(-\infty,0])$ with the desired property.
\end{proof}

Notice that in Theorem \ref{thm:exist-type1} there is a restriction $|\boldsymbol{a}|<\varepsilon_0$. It is possible to get around this by translating in the $\tau$ (which is equivalent to parabolic scaling in the corresponding non-rescaled ancient solution). However, one has to pay the price that these ancient solutions may not live up to $\tau=0$.
\begin{theorem}
 There exists a continuous map $\mathcal{S}$ (define in \eqref{eq:S(a)}) which maps any $\boldsymbol{a}\in \mathbb{R}^I$ to $\C^{2,\beta,-\lambda_I}(\mathbb{S}^1\times(-\infty, T({\boldsymbol{a}}))$ such that $\mathcal{S}(a)$ is an ancient solution of \eqref{intro:eq:v-flow} on $(-\infty, T({\boldsymbol{a}})]$, where $T(\boldsymbol{a})$ is defined in \eqref{eq:T(a)}. Moreover there is a unique decomposition $\mathcal{S}(a)=\sum_{l=1}^L v^{(l)}$ with 
\[\lim_{\tau\to -\infty }e^{\lambda_m\tau}(v^{(l)}(\cdot,\tau),\varphi_m)_h=a_m, \quad \forall \,m \text{ satisfying }\lambda_m\in((l+1)\lambda_I, l\lambda_I]\]
for any $l=1,\cdots,L$.
\end{theorem}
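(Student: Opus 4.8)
The plan is to bootstrap from Theorem \ref{thm:exist-type1} by a parabolic time-translation (equivalently a rescaling of the associated non-normalized flow). Given arbitrary $\boldsymbol{a}\in\mathbb{R}^I$, pick $\tau_0=\tau_0(\boldsymbol{a})$ sufficiently negative so that the shifted coefficient vector $\tilde{\boldsymbol{a}}=\tilde{\boldsymbol{a}}(\boldsymbol{a},\tau_0)$ with components $\tilde a_m=a_m e^{-\lambda_m\tau_0}$ satisfies $|\tilde{\boldsymbol{a}}|<\varepsilon_0$. Since each $\lambda_m<0$ and we are sending $\tau_0\to-\infty$, the factor $e^{-\lambda_m\tau_0}=e^{|\lambda_m|\tau_0}\to 0$, so such $\tau_0$ always exists; define $T(\boldsymbol{a})=-\tau_0(\boldsymbol{a})$ via an explicit formula, e.g. $T(\boldsymbol{a})=\sup\{0\}\cup\{-\tau_0 : |\tilde{\boldsymbol{a}}(\boldsymbol{a},\tau_0)|\geq \varepsilon_0/2\}$, and record it as equation \eqref{eq:T(a)}. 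Apply Theorem \ref{thm:exist-type1} to $\tilde{\boldsymbol{a}}$ to obtain functions $\tilde v^{(l)}$ on $\mathbb{S}^1\times(-\infty,0]$, and then set $v^{(l)}(\theta,\tau)=\tilde v^{(l)}(\theta,\tau-T(\boldsymbol{a}))$ and $\mathcal{S}(\boldsymbol{a})=\sum_{l=1}^L v^{(l)}$ as equation \eqref{eq:S(a)}; this is an ancient solution of \eqref{intro:eq:v-flow} on $(-\infty,T(\boldsymbol{a})]$ because \eqref{intro:eq:v-flow} is autonomous in $\tau$, so time-translates of solutions are solutions.

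Next I would verify the asymptotic normalization \eqref{eq:layer}. From Theorem \ref{thm:exist-type1} applied to $\tilde{\boldsymbol{a}}$ we have $\lim_{\sigma\to-\infty} e^{\lambda_m\sigma}(\tilde v^{(l)}(\cdot,\sigma),\varphi_m)_h=\tilde a_m$ for $m\in J^{(l)}$. Substituting $\sigma=\tau-T(\boldsymbol{a})$ and using $\tilde a_m=a_m e^{-\lambda_m\tau_0}=a_m e^{\lambda_m T(\boldsymbol{a})}$ gives
\begin{align*}
\lim_{\tau\to-\infty} e^{\lambda_m\tau}(v^{(l)}(\cdot,\tau),\varphi_m)_h
&=\lim_{\tau\to-\infty} e^{\lambda_m T(\boldsymbol{a})}\,e^{\lambda_m(\tau-T(\boldsymbol{a}))}(\tilde v^{(l)}(\cdot,\tau-T(\boldsymbol{a})),\varphi_m)_h\\
&=e^{\lambda_m T(\boldsymbol{a})}\,\tilde a_m = a_m,
\end{align*}
which is exactly the claimed identity for every $m$ with $\lambda_m\in((l+1)\lambda_I,l\lambda_I]$, i.e.\ $m\in J^{(l)}$. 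The uniqueness of the decomposition $\mathcal{S}(\boldsymbol{a})=\sum_l v^{(l)}$ is inherited from the uniqueness clause of Theorem \ref{thm:exist-type1} after undoing the time-shift, and membership $v^{(l)}\in\C^{2,\beta,-\lambda_I}(\mathbb{S}^1\times(-\infty,T(\boldsymbol{a})])$ follows because time-translation by a fixed finite amount changes the weighted norm only by the bounded factor $e^{-\lambda_I T(\boldsymbol{a})}$ on the relevant time interval.

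The remaining point, and the one requiring the most care, is continuity of $\mathcal{S}$ in $\boldsymbol{a}$: the map $\boldsymbol{a}\mapsto T(\boldsymbol{a})$ defined through a supremum need not be continuous a priori, and the domain of $\mathcal{S}(\boldsymbol{a})$ varies with $\boldsymbol{a}$, so "continuity" must be stated carefully — e.g.\ locally uniform convergence on compact subsets of the (shrinking) common time interval, or continuity into $\C^{2,\beta,-\lambda_I}$ after restricting to $(-\infty,T(\boldsymbol{a}_0)-1]$ for $\boldsymbol{a}$ near $\boldsymbol{a}_0$. I would handle this by choosing $T(\boldsymbol{a})$ to be an explicitly continuous (even smooth) function of $\boldsymbol{a}$ — for instance solving $|\tilde{\boldsymbol{a}}(\boldsymbol{a},-T)|=\varepsilon_0/2$ for the largest $T$, which by monotonicity of $T\mapsto|\tilde{\boldsymbol{a}}|$ in each coordinate is well-defined and continuous away from $\boldsymbol{a}=0$ (and one sets $T=0$, $\mathcal{S}=$ the previously constructed map, on $B_{\varepsilon_0/2}$) — and then invoking the continuous dependence of $\tilde v^{(l)}$ on $\tilde{\boldsymbol{a}}$ from Theorem \ref{thm:exist-type1} composed with the continuous dependence of $\tilde{\boldsymbol{a}}$ and the shift on $\boldsymbol{a}$. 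Matching the two pieces ($|\boldsymbol{a}|<\varepsilon_0/2$ versus $|\boldsymbol{a}|\geq\varepsilon_0/2$) along $|\boldsymbol{a}|=\varepsilon_0/2$ requires that the two constructions agree there, which they do by the uniqueness in Theorem \ref{thm:exist-type1}.
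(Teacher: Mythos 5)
Your overall strategy---reduce to Theorem \ref{thm:exist-type1} by translating in $\tau$---is exactly the paper's, but your execution has a genuine sign error that cannot be patched without changing your conclusion. With your definitions, $T(\boldsymbol a)=-\tau_0>0$ and $\tilde a_m=a_m e^{-\lambda_m\tau_0}=a_m e^{\lambda_m T}$ (indeed small, since $\lambda_m<0$), and you then set $v^{(l)}(\cdot,\tau)=\tilde v^{(l)}(\cdot,\tau-T)$, so your solution lives on $(-\infty,T]$ with $T>0$ large. But then
\[
\lim_{\tau\to-\infty}e^{\lambda_m\tau}\bigl(v^{(l)}(\cdot,\tau),\varphi_m\bigr)_h=e^{\lambda_m T}\,\tilde a_m=a_m e^{2\lambda_m T}\neq a_m,
\]
whereas your displayed computation asserts $e^{\lambda_m T}\tilde a_m=a_m$; that step tacitly replaces your $\tilde a_m=a_m e^{\lambda_m T}$ by $a_m e^{-\lambda_m T}$. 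The sign cannot be fixed in your favor: to recover the prescribed $a_m$ after a shift $\tau\mapsto\tau-T$ with $T>0$ you would need $\tilde a_m=e^{-\lambda_m T}a_m$, which is \emph{larger} than $a_m$ and so destroys the smallness $|\tilde{\boldsymbol a}|<\varepsilon_0$ required to invoke Theorem \ref{thm:exist-type1}. The only consistent construction (and the one the paper intends) is the opposite shift: apply Theorem \ref{thm:exist-type1} to the damped data $\tilde a_m=e^{\lambda_m T}a_m$ with $T>0$ large, and set $\mathcal S(\boldsymbol a)(\cdot,\tau)=\sum_l\tilde v^{(l)}(\cdot,\tau+T)$; this realizes the coefficients $a_m$ but exists only on $(-\infty,-T]$. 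That is precisely the ``price'' stated in the remark preceding the theorem: for $|\boldsymbol a|\geq\varepsilon_0$ the final time must be nonpositive, whereas your version claims existence up to a large positive time with prescribed large data, which is impossible---any solution produced by Theorem \ref{thm:exist-type1} and translated so as to live up to time $+T$ has coefficients of size $e^{2\lambda_m T}|a_m|$, as the corrected computation above shows.

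Two further remarks for calibration. First, do not match your signs against the printed \eqref{eq:T(a)}: the paper's own write-up has typos there (the intended $T(\boldsymbol a)$ is nonpositive, vanishing for $|\boldsymbol a|^2\le\varepsilon_0$, and the sharp constant should be governed by $|\lambda_I|$, the slowest decay rate, rather than $1+\alpha=|\lambda_1|$), and the interval $(-\infty,-T]$ versus $(-\infty,T]$ in the proof reflects the same slip; the structure of the argument, however, is as described above. Second, your discussion of continuity of $\boldsymbol a\mapsto T(\boldsymbol a)$, of the varying time domain, and of gluing the two regimes along $|\boldsymbol a|=\varepsilon_0/2$ via the uniqueness clause is sensible and in fact more careful than the paper's explicit formula, and your remark on uniqueness of the decomposition and on the weighted $\C^{2,\beta,-\lambda_I}$ norm under a finite time shift is fine---but none of this rescues the main construction, which must be corrected as above.
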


\begin{proof}
 For any $\boldsymbol{a}\in \mathbb{R}^I$, let 
\begin{align}\label{eq:T(a)}
T(\boldsymbol{a})=\frac{1}{2(1+\alpha)}\max\{\log\frac{\varepsilon_0}{|\boldsymbol{a}|^2},0\}
\end{align}
then 
$\sum_{i=1}^I e^{-2\lambda_m T}a_m^2<\varepsilon_0$. Then by the previous theorem, one can find $\{v^{(l)}\}_{l=1}^L$ such that $\sum_{l=1}^Lv^{(l)}$ is an ancient solution of \eqref{intro:eq:v-flow} on $\mathbb{S}^1\times(-\infty,0]$ and
\[\lim_{\tau\to -\infty }e^{\lambda_m\tau}(v^{(l)}(\cdot,\tau),\varphi_m)_h=e^{-\lambda_mT}a_m, \quad m\in J^{(l)}.\]
So we define a map $\mathcal{S}$ by translating $v^{(l)}$
\begin{align}\label{eq:S(a)}
\mathcal{S}(a)(\cdot,\tau)=\sum_{l=1}^L v^{(l)}(\cdot,\tau-T).
\end{align}
One can easily verify that $\mathcal{S}(a)$ is an ancient solution of \eqref{intro:eq:v-flow} on $\mathbb{S}^1\times(-\infty,-T]$ and 
\[\lim_{\tau\to -\infty }e^{\lambda_m\tau}(v^{(l)}(\cdot,\tau-T),\varphi_m)_h=a_m, \quad m\in J^{(l)}.\]
\end{proof}

\bibliographystyle{plainnat}
\bibliography{CSF-ref}

\begin{thebibliography}{26}
\providecommand{\natexlab}[1]{#1}
\providecommand{\url}[1]{\texttt{#1}}
\expandafter\ifx\csname urlstyle\endcsname\relax
  \providecommand{\doi}[1]{doi: #1}\else
  \providecommand{\doi}{doi: \begingroup \urlstyle{rm}\Url}\fi

\bibitem[Andrews(2002)]{andrews2002non}
Ben Andrews.
\newblock Non-convergence and instability in the asymptotic behaviour of curves
  evolving by curvature.
\newblock \emph{Communications in Analysis and Geometry}, 10\penalty0
  (2):\penalty0 409--449, 2002.

\bibitem[Andrews(2003)]{andrews2003classification}
Ben Andrews.
\newblock Classification of limiting shapes for isotropic curve flows.
\newblock \emph{Journal of the American mathematical society}, 16\penalty0
  (2):\penalty0 443--459, 2003.

\bibitem[Andrews et~al.(2016)Andrews, Guan, and Ni]{andrews2016flow}
Ben Andrews, Pengfei Guan, and Lei Ni.
\newblock Flow by powers of the gauss curvature.
\newblock \emph{Advances in Mathematics}, 299:\penalty0 174--201, 2016.

\bibitem[Angenent(1992)]{angenent1992doughnuts}
Sigurd Angenent.
\newblock Shrinking doughnuts.
\newblock In \emph{Nonlinear diffusion equations and their equilibrium states,
  3}, pages 21--38. Springer, 1992.

\bibitem[Angenent et~al.(2019{\natexlab{a}})Angenent, Brendle, Daskalopoulos,
  and Sesum]{angenent2019unique-B}
Sigurd Angenent, Simon Brendle, Panagiota Daskalopoulos, and Natasa Sesum.
\newblock Unique asymptotics of compact ancient solutions to three-dimensional
  ricci flow.
\newblock \emph{arXiv preprint arXiv:1911.00091}, 2019{\natexlab{a}}.

\bibitem[Angenent et~al.(2019{\natexlab{b}})Angenent, Daskalopoulos, and
  Sesum]{angenent2019unique}
Sigurd Angenent, Panagiota Daskalopoulos, and Natasa Sesum.
\newblock {Unique asymptotics of ancient convex mean curvature flow solutions}.
\newblock \emph{Journal of Differential Geometry}, 111\penalty0 (3):\penalty0
  381--455, 2019{\natexlab{b}}.

\bibitem[Angenent et~al.(2018)Angenent, Daskalopoulos, and
  Sesum]{angenent2018uniqueness}
Sigurd~B Angenent, Panagiota Daskalopoulos, and Natasa Sesum.
\newblock Uniqueness of two-convex closed ancient solutions to the mean
  curvature flow.
\newblock \emph{arXiv preprint arXiv:1804.07230}, 2018.

\bibitem[Bourni et~al.(2019)Bourni, Langford, and Tinaglia]{bourni2019convex}
Theodora Bourni, Mat Langford, and Giuseppe Tinaglia.
\newblock Convex ancient solutions to curve shortening flow.
\newblock \emph{arXiv preprint arXiv:1903.02022}, 2019.

\bibitem[Bourni et~al.(2020)Bourni, Clutterbuck, Nguyen, Stancu, Wei, and
  Wheeler]{bourni2020ancient}
Theodora Bourni, Julie Clutterbuck, Xuan~Hien Nguyen, Alina Stancu, Guofang
  Wei, and Valentina-Mira Wheeler.
\newblock Ancient solutions for flow by powers of the curvature in
  $\mathbb{R}^2$.
\newblock \emph{arXiv preprint arXiv:2005.07642}, 2020.

\bibitem[Brendle(2018)]{brendle2018ancient}
Simon Brendle.
\newblock Ancient solutions to the ricci flow in dimension 3.
\newblock \emph{arXiv preprint arXiv:1811.02559}, 2018.

\bibitem[Brendle and Choi(2018)]{brendle2018uniqueness}
Simon Brendle and Kyeongsu Choi.
\newblock Uniqueness of convex ancient solutions to mean curvature flow in
  higher dimensions.
\newblock \emph{arXiv preprint arXiv:1804.00018}, 2018.

\bibitem[Brendle and Choi(2019)]{brendle2019uniqueness}
Simon Brendle and Kyeongsu Choi.
\newblock Uniqueness of convex ancient solutions to mean curvature flow in
  $\mathbb{R}^3$.
\newblock \emph{Inventiones mathematicae}, 217\penalty0 (1):\penalty0 35--76,
  2019.

\bibitem[Brendle et~al.(2020)Brendle, Daskalopulos, and
  Sesum]{brendle2020uniqueness}
Simon Brendle, Panagiota Daskalopulos, and Natasa Sesum.
\newblock Uniqueness of compact ancient solutions to three-dimensional ricci
  flow.
\newblock \emph{arXiv preprint arXiv:2002.12240}, 2020.

\bibitem[Caffarelli et~al.(1984)Caffarelli, Hardt, and Simon]{caffarelli1984}
Luis Caffarelli, Robert Hardt, and Leon Simon.
\newblock Minimal surfaces with isolated singularities.
\newblock \emph{Manuscripta Mathematica}, 48\penalty0 (1):\penalty0 1432--1785,
  1984.

\bibitem[Chen(2015)]{chen2015classifying}
Shibing Chen.
\newblock Classifying convex compact ancient solutions to the affine curve
  shortening flow.
\newblock \emph{The Journal of Geometric Analysis}, 25\penalty0 (2):\penalty0
  1075--1079, 2015.

\bibitem[Chodosh et~al.(2020)Chodosh, Choi, Mantoulidis, and
  Schulze]{chodosh2020mean}
Otis Chodosh, Kyeongsu Choi, Christos Mantoulidis, and Felix Schulze.
\newblock Mean curvature flow with generic initial data.
\newblock \emph{arXiv preprint arXiv:2003.14344}, 2020.

\bibitem[Choi et~al.(2020)Choi, Choi, and
  Daskalopoulos]{choi2020uniqueness-choi}
Beomjun Choi, Kyeongsu Choi, and Panagiota Daskalopoulos.
\newblock Uniqueness of ancient solutions to gauss curvature flow asymptotic to
  a cylinder.
\newblock \emph{arXiv preprint arXiv:2004.11754}, 2020.

\bibitem[Choi and Mantoulidis(2019)]{choi2019ancient}
Kyeongsu Choi and Christos Mantoulidis.
\newblock Ancient gradient flows of elliptic functionals.
\newblock \emph{arXiv preprint arXiv:1902.07697}, 2019.

\bibitem[Choi et~al.(2018)Choi, Haslhofer, and Hershkovits]{choi2018ancient}
Kyeongsu Choi, Robert Haslhofer, and Or~Hershkovits.
\newblock Ancient low entropy flows, mean convex neighborhoods, and uniqueness.
\newblock \emph{arXiv preprint arXiv:1810.08467}, 2018.

\bibitem[Choi et~al.(2019)Choi, Haslhofer, Hershkovits, and
  White]{choi2019ancient-white}
Kyeongsu Choi, Robert Haslhofer, Or~Hershkovits, and Brian White.
\newblock Ancient asymptotically cylindrical flows and applications.
\newblock \emph{arXiv preprint arXiv:1910.00639}, 2019.

\bibitem[Courant and Hilbert(1953)]{courant1953methods}
R.~Courant and D.~Hilbert.
\newblock \emph{Methods of Mathematical Physics}.
\newblock Number v. 1. Wiley, 1953.

\bibitem[Daskalopoulos et~al.(2010)Daskalopoulos, Hamilton, and
  Sesum]{daskalopoulos2010}
Panagiota Daskalopoulos, Richard Hamilton, and Natasa Sesum.
\newblock Classification of compact ancient solutions to the curve shortening
  flow.
\newblock \emph{J. Differential Geom.}, 84\penalty0 (3):\penalty0 455--464, 03
  2010.

\bibitem[Ivaki(2016)]{ivaki2016classification}
Mohammad~N Ivaki.
\newblock Classification of compact convex ancient solutions of the planar
  affine normal flow.
\newblock \emph{The Journal of Geometric Analysis}, 26\penalty0 (1):\penalty0
  663--671, 2016.

\bibitem[Loftin and Tsui(2008)]{loftin2008ancient}
John Loftin and Mao-Pei Tsui.
\newblock Ancient solutions of the affine normal flow.
\newblock \emph{Journal of Differential Geometry}, 78\penalty0 (1):\penalty0
  113--162, 2008.

\bibitem[Lunardi(2012)]{lunardi2012analytic}
Alessandra Lunardi.
\newblock \emph{Analytic semigroups and optimal regularity in parabolic
  problems}.
\newblock Springer Science \& Business Media, 2012.

\bibitem[Wang(2011)]{Wang11}
Xu-Jia Wang.
\newblock Convex solutions to the mean curvature flow.
\newblock \emph{Annals of mathematics}, pages 1185--1239, 2011.

\end{thebibliography}

\end{document}